\newtheorem{thm}{Theorem}[section]
\newtheorem{cor}[thm]{Corollary}
\newtheorem{prop}[thm]{Proposition}
\newtheorem{lem}[thm]{Lemma}
\newtheorem{defn}[thm]{Definition}
\newtheorem{exmp}[thm]{Example}
\newtheorem{rem}[thm]{Remark}
\let\emptyset\varnothing
\numberwithin{equation}{section}
\DeclareMathAlphabet{\mathbbmsl}{U}{bbm}{m}{sl}
\newcommand\numberthis{\addtocounter{equation}{1}\tag{\theequation}}
\patchcmd{\@settitle}{\uppercasenonmath\@title}{}{}{}
\patchcmd{\@setauthors}{\MakeUppercase}{}{}{}
\patchcmd{\section}{\scshape}{}{}{}
\title[Strichartz estimates for the Schr\"odinger flow on compact Lie groups]{Strichartz estimates for the Schr\"odinger flow on compact Lie groups}
\author[Y. Zhang]{Yunfeng Zhang}
\address{Department of Mathematics, University of Connecticut, Storrs, CT 06269}%
\email{yunfeng.zhang@uconn.edu}
\begin{document}

\onehalfspacing

\begin{abstract}
We establish scale-invariant Strichartz estimates for the Schr\"odinger flow on any compact Lie group equipped with canonical rational metrics. In particular, full Strichartz estimates without loss for some non-rectangular tori are given. The highlights of this paper include estimates for some Weyl type sums defined on rational lattices, different decompositions of the Schr\"odinger kernel that accommodate different positions of the variable inside the maximal torus relative to the cell walls, and an application of the BGG-Demazure operators or Harish-Chandra's integral formula to the estimate of the difference between characters. 
\end{abstract}


\maketitle

\tableofcontents

\section{Introduction}
We start with a complete Riemannian manifold $(M,g)$ of dimension $d$,  associated to which are the Laplace-Beltrami operator $\Delta_g$ and the volume form measure $\mu_g$. Then it is well known that $\Delta_g$ is essentially self-adjoint on $L^2(M):=L^2(M,d\mu_g)$; see \cite{Str83} for a proof. This gives the functional calculus of $\Delta_g$, and in particular gives the one-parameter unitary operators $e^{it\Delta_g}$  which provides the solution to the linear Schr\"odinger equation on $(M,g)$. We refer to $e^{it\Delta_g}$ as the  \textit{Schr\"odinger flow}. The functional calculus of $\Delta_g$ also gives the definition of the Bessel potentials, and thus the definition of the Sobolev space
\begin{equation*}
H^s(M):=\{u\in L^2(M) \mid \|u\|_{H^s(M)}:=\|(I-\Delta)^{s/2}u\|_{L^2(M)}<\infty\}.
\end{equation*}
We are interested in obtaining estimates of the form 
\begin{equation}\label{Strichartz}
\|e^{it\Delta_g}f\|_{L^pL^r(I\times M)}\leq C \|f\|_{H^s(M)}
\end{equation}
where $I\subset\mathbb{R}$ is a fixed time interval, and 
$L^pL^q(I\times M)$ is the space of $L^p$ functions on $I$ with values in $L^q(M)$. Such estimates are often called Strichartz estimates (for the Schr\"odinger flow), in honor of Robert Strichartz \cite{Str77} who first derived such estimates for the wave equation on Euclidean spaces.

The significance of Strichartz estimates is evident in many ways. Strichartz estimates have important applications in the field of nonlinear Schr\"odinger equations, in the sense that many perturbative results often require good control on the linear solution which is exactly provided by Strichartz estimates. Strichartz estimates can also be interpreted as Fourier restriction estimates, which play a fundamental rule in the field of classical harmonic analysis. Furthermore, the relevance of the distribution of eigenvalues and the norm of eigenfunctions of $\Delta_g$ in deriving the estimates makes Strichartz estimates also a subject in the field of spectral geometry. 

Many cases of Strichartz estimates for the Schr\"odinger flow are known in the literature.   For noncompact manifolds, first we have the sharp Strichartz estimates on the Euclidean spaces obtained in \cite{GV95, KT98}:
\begin{equation}\label{StrEuc0}
\|e^{it\Delta}f\|_{L^pL^q(\mathbb{R}\times \mathbb{R}^d)}\leq C\|f\|_{L^2(\mathbb{R}^d)}
\end{equation}
where $\frac{2}{p}+\frac{d}{q}=\frac{d}{2}$, $p,q\geq 2$, $(p,q,d)\neq (2,\infty,2)$. Such pairs $(p,q)$ are called \textit{admissible}. This implies by Sobolev embedding that 
\begin{equation}\label{StrEuc}
\|e^{it\Delta}f\|_{L^pL^r(\mathbb{R}\times\mathbb{R}^d)}\leq C\|f\|_{H^s(\mathbb{R}^d)}
\end{equation}
where
\begin{equation}\label{scaling}
s=\frac{d}{2}-\frac{2}{p}-\frac{d}{r}\geq 0,
\end{equation} 
$p,q\geq 2$, $(p,r,d)\neq(2,\infty,2)$. Note that the equality in \eqref{scaling} can be derived from a standard scaling argument, and we call exponent triples $(p,r,s)$ that satisfy \eqref{scaling} as well as the corresponding Strichartz estimates \textit{scale-invariant}. 
Similar Strichartz estimates hold on many noncompact manifolds. For example, see \cite{AP09, Ban07, IS09, Pie06} for Strichartz estimates on the real hyperbolic spaces, \cite{APV11, Pie08, BD07} for Damek-Ricci spaces which include all rank-1 symmetric spaces of noncompact type,  \cite{Bou11} for asymptotically hyperbolic manifolds, \cite{HTW06} for asymptotically conic manifolds, \cite{BT08, ST02} for some perturbed Schr\"odinger equations on Euclidean spaces, and \cite{FMM15} for symmetric spaces $G/K$ where $G$ is complex.  

For compact manifolds however, Strichartz estimates such as  \eqref{StrEuc0} are expected to fail. The Sobolev exponent $s$ in \eqref{Strichartz} is expected to be positive for \eqref{Strichartz} to possibly hold. And we also expect sharp Strichartz estimates that are \textit{non-scale-invariant}, in the sense that the exponents $(p,r,s)$ in \eqref{Strichartz} satisfy
\begin{equation*}
s>\frac{d}{2}-\frac{2}{p}-\frac{d}{r}. 
\end{equation*}
For example, from the results in \cite{ST02, BGT04}, we know that on a general compact Riemannian manifold $(M,g)$ it holds that for any finite interval $I$, 
\begin{equation}\label{StrCpt}
\|e^{it\Delta_g}f\|_{L^pL^r(I\times M)}\leq C\|f\|_{H^{1/p}(M)}
\end{equation}
for all admissible pairs $(p,r)$. These estimates are non-scale-invariant, and the special case of which when $(p,r,s)=(2,\frac{2d}{d-2}, 1/2)$ can be shown to be sharp on spheres of dimension $d\geq 3$ equipped with canonical Riemannian metrics. On the other hand, scale-invariant estimates are out of reach of the local methods employed in \cite{ST02, BGT04}, and they are not well explored yet in the literature. To my best knowledge, the only known results in the literature in this direction are on Zoll manifolds, which include all compact symmetric spaces of rank 1, the standard sphere being a typical example; and on rectangular tori. We summarize the results here. Consider the scale-invariant estimates
\begin{equation}\label{StrCri}
\|e^{it\Delta_g}f\|_{L^p(I\times M)}\leq C\|f\|_{H^{\frac{d}{2}-\frac{d+2}{p}}(M)}. 
\end{equation}
In the direction of Zoll manifolds, 
\eqref{StrCri} is first proved in \cite{BGT07} for the standard three-sphere for $p=6$. Then in \cite{Her13}, \eqref{StrCri} is proved for all $p>4$ for any three-dimensional Zoll manifold, but the methods employed in that paper in fact prove \eqref{StrCri} for $p>4$ for any Zoll manifold with dimension $d\geq 3$ and for $p\geq 6$ for any Zoll surface ($d=2$). The paper crucially uses the property of Zoll manifolds that the spectrum of the Laplace-Beltrami operator is clustered around a sequence of squares, and the spectral cluster estimates (\cite{Sog88}) which are optimal on spheres. 
In the direction of tori, \eqref{StrCri} was first proved in \cite{Bou93} for $p\geq \frac{2(d+4)}{d}$ on square tori, by interpolating  
the distributional Strichartz estimate 
\begin{equation}\label{StrDis}
\lambda\cdot\mu\{(t,x)\in I\times\mathbb{T}^d \mid |e^{it\Delta_g}\varphi(N^{-2}\Delta_g)f(x)|>\lambda\}^{1/p}\leq C N^{\frac{d}{2}-\frac{d+2}{p}}\|f\|_{L^2(\mathbb{T}^d)}\leq C\|f\|_{H^{\frac{d}{2}-\frac{d+2}{p}}(\mathbb{T}^d)}. 
\end{equation}
for $\lambda>N^{d/4}$, $p>\frac{2(d+2)}{d}$, $N\geq 1$, with the trivial subcritical Strichartz estimate
\begin{equation}\label{StrTri}
\|e^{it\Delta_g}f\|_{L^2(I\times\mathbb{T}^d)} \leq C\|f\|_{L^2(\mathbb{T}^d)}. 
\end{equation}
The estimate \eqref{StrDis} is a consequence of an arithmetic version of dispersive estimates: 
\begin{equation}\label{DisAri}
\|e^{it\Delta_g}\varphi(N^{-2}\Delta_g)\|_{L^\infty(\mathbb{T}^d)}\leq C
\left(\frac{N}{\sqrt{q}(1+N\|\frac{t}{T}-\frac{a}{q}\|^{1/2})}\right)^{d}\|f\|_{L^1(\mathbb{T}^d)}
\end{equation}
where $\|\cdot\|$ stands for the distance from 0 on the standard circle with length 1,  $\|\frac{t}{T}-\frac{a}{q}\|<\frac{1}{qN}$, $a,q$ are nonnegative integers with $a<q$ and $(a,q)=1$, and $q<N$. Here $T$ is the period for the Schr\"odinger flow $e^{it\Delta_g}$. Then in \cite{Bou13}, the author improved \eqref{StrTri} into a stronger subcritical Strichartz estimate 
\begin{equation}\label{StrSub}
\|e^{it\Delta_g}f\|_{L^{\frac{2(d+1)}{d}}(I\times\mathbb{T}^d)}\leq C\|f\|_{L^2(\mathbb{T}^d)}
\end{equation}
which yields \eqref{StrCri} for $p\geq \frac{2(d+3)}{d}$. Eventually, \eqref{StrCri}
with an $\varepsilon$-loss is proved for the full range $p>\frac{2(d+2)}{d}$ in \cite{BD15}, and \eqref{StrDis} can be used to remove this $\varepsilon$-loss. 
Then authors in \cite{GOW14, KV16} extended the results to all rectangular tori. We will see in this paper that by a slight adaptation of the methods in \cite{Bou93}, we may generalize \eqref{StrDis} to all rational (not necessarily rectangular) tori $\mathbb{T}^d=\mathbb{R}^d/\Gamma$ where $\Gamma\cong\mathbb{Z}^d$ is a lattice such that there exists some $D\neq 0$ for which $\langle\lambda,\mu\rangle\in D^{-1}\mathbb{Z}$ for all $\lambda,\mu\in\Gamma$, which can also be used for the removal of the $\varepsilon$-loss of the results in \cite{BD15} to yield \eqref{StrCri} for the full range $p>\frac{2(d+2)}{d}$ on such rational tori.


The understanding of Strichartz estimates on compact manifolds is far from complete. It is not known in general how the exponents $(p,r,s)$ in the sharp Strichartz estimates are related to the geometry and topology of the underlying manifold. Also, there still are important classes of compact manifolds on which the Strichartz estimates have not been explored yet. Note that both standard tori and spheres on which Strichartz estimates are known are special cases of compact globally symmetric spaces, and since all compact globally symmetric spaces share the same behavior of geodesic dynamics as tori, from a semiclassical point of view, it's natural to conjecture that similar Strichartz estimates should hold on general compact globally symmetric spaces. An important class of such spaces is the class of compact Lie groups. The goal of this paper is to prove scale-invariant Strichartz estimates of the form \eqref{StrCri} for $M=G$ being any connected compact Lie group equipped with a canonical \textit{rational metric} in the sense that is described below, for all $p\geq\frac{2(r+4)}{r}$, $r$ being the \textit{rank} of $G$. In particular, full Strichartz estimates without loss for some non-rectangular tori will be given.

\section{Statement of the Main Theorem}

\subsection{Rational Metric}\label{RationalMetric}
Let $G$ be a connected compact Lie group and $\mathfrak{g}$ be its Lie algebra. By the classification theorem of connected compact Lie groups, see Chapter 10, Section 7.2, Theorem 4 in \cite{Pro07}), there exists an exact sequence of Lie group homomorphisms
\begin{align*}
1\rightarrow A\rightarrow \tilde{G}\cong\mathbb{T}^n\times K\rightarrow G\rightarrow1
\end{align*}
where $\mathbb{T}^n$ is the $n$-dimensional torus, $K$ is a compact simply connected semisimple Lie group, and $A$ is a finite and central subgroup of the \textit{covering group} $\tilde{G}$. As a compact simply connected semisimple Lie group, $K$ is a direct product $K_1\times K_2\times\cdots\times K_m$ of compact simply connected simple Lie groups. 

Now each $K_i$ is equipped with the canonical bi-invariant Riemannian metric $g_i$ that is induced from the negative of the Cartan-Killing form. We use $\langle\  ,\  \rangle$ to denote the Cartan-Killing form. Then we equip the torus factor $\mathbb{T}^n$ with a flat metric $g_0$ inherited from its representation as the quotient $\mathbb{R}^n/2\pi\Gamma$ and require that there exists some $D\in\mathbb{N}$ such that $\langle \lambda,\mu\rangle\in D^{-1}\mathbb{Z}$ for all $\lambda,\mu\in\Gamma$. 
Then we equip $\tilde{G}\cong \mathbb{T}^n\times K_1\times \cdots\times K_m$ the bi-invariant metric 
\begin{equation}\label{tildeg}
\tilde{g}=\otimes_{j=0}^m \beta_jg_{j},
\end{equation} 
$\beta_j>0$, $j=0,\ldots, m$. Then $\tilde{g}$ induces a bi-invariant metric $g$ on $G$. 

\begin{defn}\label{rationalmetric}
Let $g$ be the bi-invariant metric induced from $\tilde{g}$ in \eqref{tildeg} as described above. 
We call $g$ a \textit{rational metric} provided the numbers $\beta_0,\cdots,\beta_m$ are rational multiples of each other. If not, we call it an \textit{irrational metric}. 
\end{defn}

Provided the numbers $\beta_0,\cdots,\beta_m$ are rational multiples of each other, the periods of the Schr\"odinger flow $e^{it\Delta_{\tilde{g}}}$ on each factor of $\tilde{G}$ are rational multiples of each other, which implies that the Schr\"odinger flow on $\tilde{G}$ as well as on $G$ is also periodic (see Section \ref{The Schrodinger Kernel}).

\subsection{Main Theorem}

We define the \textit{rank} of $G$ to be the dimension of any of its maximal torus. This paper mainly proves the following theorem.

\begin{thm}\label{Main}
Let $G$ be a connected compact Lie group equipped with a rational metric $g$. Let $d$ be the dimension of $G$ and $r$ the rank of $G$. Let $I\subset\mathbb{R}$ be a finite time interval. Consider the scale-invariant Strichartz estimate
\begin{equation}\label{StrCptGrp} 
\|e^{it\Delta_g}f\|_{L^p(I\times G)}\leq C\|f\|_{H^{\frac{d}{2}-\frac{d+2}{p}}(G)}.
\end{equation}
Then the following statements hold true. \\
(i) \eqref{StrCptGrp} 
holds for all $p\geq 2+\frac{8}{r}$. \\
(ii) Let $G=\mathbb{T}^d$ be a flat torus equipped with a rational metric, that is, we can write $\mathbb{T}^d=\mathbb{R}^d/2\pi\Gamma$ such that there exists some $D\in\mathbb{R}$ for which $\langle\lambda,\mu\rangle\in D^{-1}\mathbb{Z}$ for all $\lambda,\mu\in\Gamma$. Then 
\eqref{StrCptGrp} holds for all $p>2+\frac{4}{d}$.  
\end{thm}

The framework for the proof of this theorem will be based on \cite{Bou93}, in which the author proves some Strichartz estimates for the case of square tori, based on the Hardy-Littlewood circle method. We also refer to \cite{Bou89} for applications of the circle method to Fourier restriction problems on tori. Note that part (ii) of the above theorem provides full expected Strichartz estimates without loss for some non-rectangular tori. We then have the following immediate corollary. 

\begin{cor}
Let $d=3,4$ and let $\mathbb{T}^d$ be the flat torus equipped with a rational metric (not necessarily rectangular). Then the nonlinear Schr\"odinger equation $i\partial u_t=-\Delta u\pm |u|^{\frac{4}{d-2}}u$ is locally wellposed for initial data in $H^{1}(\mathbb{T}^d)$. Furthermore, for $d=3$, we have $i\partial u_t=-\Delta u\pm |u|^{2}u$ is locally wellposed for initial data in $H^{\frac{1}{2}}(\mathbb{T}^d)$.
\end{cor}

We refer to \cite{HTT11} and \cite{KV16} for the definition of local well-posedness and a proof of this corollary. 

\begin{rem}
To the best of my knowledge, the only known optimal range of $p$ for \eqref{StrCptGrp} to hold is on square tori $\mathbb{T}^d$, with $p>2+\frac{4}{d}$ (\cite{Bou93}); and on spheres $\mathbb{S}^d$ ($d\geq 3$), with $p>4$ (\cite{BGT04, Her13}). For a general compact Lie group, we do not yet have a conjecture about the optimal range. 
We will prove (Theorem \ref{MainEstimate}) the following distributional estimate: for any $p>2+\frac{4}{r}$, 
\begin{equation}
\lambda\cdot\mu\{(t,x)\in I\times G \mid |e^{it\Delta_g}\varphi(N^{-2}\Delta_g)f(x)|>\lambda\}^{1/p}\leq C N^{\frac{d}{2}-\frac{d+2}{p}}\|f\|_{L^2(G)}
\end{equation}
for all $\lambda\gtrsim N^{\frac{d}{2}-\frac{r}{4}}$. It seems reasonable to conjecture that the above distributional estimate could be upgraded to the estimate \eqref{StrCptGrp} for all $p>2+\frac{4}{r}$ (which is the case for the tori). But this still will not be the optimal range for a general compact Lie group, by looking at the example of the three sphere $\mathbb{S}^3$, which is isomorphic to the group $SU(2)$. The optimal range for $\mathbb{S}^3$ is $p>4$, while Theorem \ref{Main} proves the range $p\geq 10$, and the above conjecture indicates the range $p>6$. Estimate \eqref{StrCptGrp} for $\mathbb{S}^3$ on the optimal range $p>4$ is proved in \cite{Her13} by crucially using the $L^p$-estimates of the spectral clusters for the Laplace-Beltrami operator (\cite{Sog88}) which are optimal on spheres. On tori and more generally compact Lie groups with rank higher than 1, such spectral cluster estimates fail to be optimal and do not help provide the desired Strichartz estimates. On the other hand, the Stein-Tomas argument in our proof of Theorem \ref{Main} seems only sensitive to the $L^\infty$-estimate of the Schr\"{o}dinger kernel (Theorem \ref{dispersive for Schrodinger}) but not to the $L^p$-estimate (as in Proposition \ref{LpLp}). This failure of incorporating $L^p$-estimates for either the spectral clusters or the Schr\"{o}dinger kernel may be one of the reasons why Theorem \ref{Main} is still a step away from the optimal range. 
\end{rem}

\subsection{Organization of the Paper}
The organization of the paper is as follows. In Section \ref{First Reductions}, we will first reduce the Strichartz estimates on $G\cong\tilde{G}/A$ to the spectrally localized Strichartz estimates with respect to Littlewood-Paley projections of the product type on the covering group $\tilde{G}$. In Section \ref{Preliminaries on harmonic analysis}, we will review the basic facts of structures and harmonic analysis on compact Lie groups, including the Fourier transform, root systems, structure of maximal tori, Weyl's character and dimension formulas, and the functional calculus of the  Laplace-Beltrami operator. In Section \ref{The Schrodinger Kernel} we will explicitly write down the Schr\"{o}dinger kernel and interpret the Strichartz estimates as Fourier restriction estimates on the space-time, which then makes applicable the argument of Stein-Tomas type in Section \ref{The Stein-Tomas Argument}. Then comes the core of the paper, Section \ref{Dispersive Estimates on Major Arcs}, in which we will derive dispersive estimates for the Schr\"{o}dinger kernel as the time variable lies in major arcs. 
In Section \ref{Weyl Type Sums and Weyl Differencing for Rational Lattices}, we will estimate some Weyl type exponential sums over the  so-called rational lattices, which in particular will imply the desired bound on the Schr\"{o}dinger kernel for the non-rectangular rational tori. In Section \ref{First Reduction}, we will rewrite the Schr\"{o}dinger kernel for compact Lie groups into an exponential sum over the whole weight lattice instead of just one chamber of the lattice, and will prove the desired bound on the kernel for the case when the variable in the maximal torus stays away from all the cell walls by an application of the Weyl type sum estimate established in Section \ref{Weyl Type Sums and Weyl Differencing for Rational Lattices}. 
In Section \ref{Pseudo-polynomial Behavior of Characters}, we will record two approaches to the pseudo-polynomial behavior of characters, which will be applied to proving the desired bound on the Schr\"{o}dinger kernel when the variable in the maximal torus stays close to the identity. 
In Section \ref{From the Weight Lattice to the Root Lattice}, we further extend the result to the case when the variable in the maximal torus stays close to some corner. Section \ref{RootSubsystems} will finally deal with the case when the variable in the maximal torus stays away from all the corners but close to some cell walls. These cell walls will be identified as those of a root subsystem, and we will then decompose the Schr\"{o}dinger kernel into exponential sums over the root lattice of this root subsystem, thus reducing the problem into one similar to those already discussed in previous sections. This will finish the proof of the main theorem. In Section \ref{Lp Estimates}, we will derive $L^p(G)$ estimates on the Schr\"{o}dinger kernel as an upgrade of the $L^\infty(G)$-estimate.  

Throughout the paper: 
\begin{itemize}
\item 
$A\lesssim B$ means $A\leq CB$ for some constant $C$. 
\item $A\lesssim_{a,b,\cdots}B$ means $A\leq CB$ for some constant $C$ that depends on $a,b,\cdots$.  
\item 
$\Delta,\mu$ are short for the Laplace-Beltrami operator $\Delta_g$ and the associated volume form measure $\mu_g$ respectively when the underlying Riemannian metric $g$ is clear from context.
\item 
$L^p_x, H^s_x, L^p_t, L^p_tL^q_x, L^{p}_{t,x}$ are short for $L^p(M), H^s(M), L^p(I), L^pL^q(I\times M), L^p(I\times M)$ respectively when the underlying manifold $M$ and time interval $I$ are clear from context. 
\item $p'$ denotes the number such that $\frac{1}{p}+\frac{1}{p'}=1$. 
\end{itemize}

\section{First Reductions} 
\label{First Reductions}

\subsection{Littlewood-Paley Theory}\label{Littlewood-Paley Theory and First Reduction}

Let $(M,g)$ be a compact Riemannian manifold and $\Delta$ be the Laplace-Beltrami operator. Let $\varphi$ be a bump function on $\mathbb{R}$. Then for $N\geq 1$, $P_N:=\varphi(N^{-2}\Delta)$ defines a bounded operator on $L^2(M)$ through the functional calculus of $\Delta$. These operators $P_N$ are often called the \textit{Littlewood-Paley projections}. We reduce the problem of obtaining  Strichartz estimates for $e^{it\Delta}$ to those for $P_Ne^{it\Delta}$. 

\begin{prop}\label{LP}
Fix $p, q\geq 2$, $s\geq 0$. Then the Strichartz estimate \eqref{Strichartz} is equivalent to the following statement: Given any bump function $\varphi$, 
\begin{equation*}
\|P_Ne^{it\Delta}f\|_{L^pL^q(I\times M)}\lesssim N^{s}\|f\|_{L^2(M)}, 
\end{equation*}
holds for all dyadic natural numbers $N$ (that is, for $N=2^m$, $m\in\mathbb{Z}_{\geq 0}$). In particular, \eqref{StrCptGrp} reduces to 
\begin{equation}\label{StrLoc}
\|P_Ne^{it\Delta}f\|_{L^p(I\times G)}\leq N^{\frac{d}{2}-\frac{d+2}{p}}\|f\|_{L^2(G)}.
\end{equation}
\end{prop}
This reduction is classical. We refer to \cite{BGT04} for a proof. 

We also record here the Bernstein type inequalities that will be useful in the sequel. 

\begin{prop}[Corollary 2.2 in \cite{BGT04}] Let $d$ be the dimension of $M$. Then for all $1\leq p\leq r\leq \infty$, 
\begin{align}\label{Bernstein}
\|P_Nf\|_{L^r(M)}\lesssim N^{d(\frac{1}{p}-\frac{1}{r})}\|f\|_{L^p(M)}.
\end{align}
\end{prop}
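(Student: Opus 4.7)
The plan is to exploit the bi-invariance of $g$ to realize $P_N$ as convolution with a central kernel on $G$, and to prove $L^q$ bounds on that kernel via two endpoint estimates plus interpolation, closing the Bernstein inequality through Young's convolution inequality. Because $g$ is bi-invariant, $\Delta$ commutes with both left and right translations, so by the spectral calculus $P_N f = f \ast K_N$ for a central function $K_N$ on $G$. Peter--Weyl gives the explicit expansion $K_N = \sum_{\pi \in \hat G} d_\pi\, \varphi(N^{-2}\lambda_\pi)\, \chi_\pi$, where $d_\pi$, $\lambda_\pi$, $\chi_\pi$ denote the dimension, Casimir eigenvalue, and character of the irreducible representation $\pi$. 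Young's inequality gives $\|P_N f\|_{L^r} \le \|K_N\|_{L^q}\|f\|_{L^p}$ with $\tfrac{1}{q} = 1 + \tfrac{1}{r} - \tfrac{1}{p} \in [0,1]$, so the entire statement reduces to proving
\begin{equation*}
\|K_N\|_{L^q(G)} \lesssim N^{d/q'} \qquad\text{for every } q \in [1,\infty].
\end{equation*}

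Using $|\chi_\pi(x)| \le d_\pi$, pointwise $|K_N(x)| \le \sum_\pi d_\pi^2\,|\varphi(N^{-2}\lambda_\pi)|$; combined with the compact support of $\varphi$ and Weyl's asymptotic law $\sum_{\lambda_\pi \le N^2} d_\pi^2 \lesssim N^d$ (the $d_\pi^2$ being the multiplicities of $\Delta$), this yields $\|K_N\|_{L^\infty}\lesssim N^d$. Plancherel on $G$ gives $\|K_N\|_{L^2}^2 = \sum_\pi d_\pi^2|\varphi(N^{-2}\lambda_\pi)|^2 \lesssim N^d$. The log-convexity inequality $\|K_N\|_{L^q} \le \|K_N\|_{L^\infty}^{1-2/q}\|K_N\|_{L^2}^{2/q}$ then produces $\|K_N\|_{L^q}\lesssim N^{d/q'}$ for every $q \in [2,\infty]$.

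The remaining range $q \in [1,2]$, by log-convexity against $L^2$, reduces to the single endpoint $\|K_N\|_{L^1}\lesssim 1$. I would obtain this via finite propagation speed of the wave operator: arrange $\varphi$ to be supported in $(0,\infty)$ and set $\tilde\varphi(\mu)=\varphi(\mu^2)$, which is even and compactly supported on $\mathbb{R}$; then
\begin{equation*}
\varphi(N^{-2}\Delta) = \int_{\mathbb{R}} \hat{\tilde\varphi}(\tau)\,\cos\!\bigl(\tfrac{\tau}{N}\sqrt{\Delta}\bigr)\,d\tau.
\end{equation*}
The support condition $\operatorname{supp}\cos(s\sqrt\Delta)(x,y)\subset\{d(x,y)\le|s|\}$, together with the Schwartz decay of $\hat{\tilde\varphi}$, produces a pointwise localization of the form $|K_N(x)|\lesssim_M N^d\,(1+N\,d(x,e))^{-M}$ valid for every $M\ge 0$; integrating in geodesic polar coordinates around $e$ then gives $\|K_N\|_{L^1}\lesssim 1$.

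The main obstacle is this $L^1$ bound on $K_N$: the $q=\infty$ and $q=2$ estimates are immediate from Peter--Weyl plus Weyl's law, but $\|K_N\|_{L^1}\lesssim 1$ genuinely requires localizing $K_N$ near the identity at scale $N^{-1}$, which is what the finite-propagation-speed argument supplies. An alternative route using explicit character asymptotics on $G$ would also work, but is considerably more delicate than the spectral argument sketched above.
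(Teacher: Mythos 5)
The paper does not actually prove this proposition: it is quoted verbatim as Corollary 2.2 of \cite{BGT04}, where it follows from a semiclassical parametrix for $\varphi(h^2\Delta_g)$ (their Proposition 2.1), i.e.\ from a kernel bound $|K_N(x,y)|\lesssim_M N^d(1+Nd(x,y))^{-M}$ obtained by WKB/stationary phase on a general compact manifold. Your reduction is the same in spirit --- everything hinges on kernel bounds for $P_N$, and Young's inequality plus the Peter--Weyl/Weyl-law computations of $\|K_N\|_{L^\infty}$ and $\|K_N\|_{L^2}$ are correct, as is the observation that the whole statement (in particular the case $p=r$) boils down to $\|K_N\|_{L^1}\lesssim 1$. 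Using the group structure to write $P_Nf=f*K_N$ with a single central kernel is a genuine simplification over the manifold case, and replacing the parametrix by the wave-group representation of the multiplier is a legitimate alternative route.

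However, the finite-propagation-speed step is stated too loosely in two respects. First, a sign slip: here $\Delta\le 0$ with eigenvalues $-k_\lambda$, so the relevant half-line is $(-\infty,0]$; you should set $\tilde\varphi(\mu):=\varphi(-\mu^2)$ (even, smooth, compactly supported for any $\varphi\in C_c^\infty(\mathbb{R})$) and write $\varphi(N^{-2}\Delta)=\tilde\varphi(N^{-1}\sqrt{-\Delta})$; as written, ``$\varphi$ supported in $(0,\infty)$'' would force $P_N=0$. Second, and more substantively, the kernel of $\cos(s\sqrt{-\Delta})$ is only a distribution, so the support property plus Schwartz decay of $\hat{\tilde\varphi}$ does not by itself yield the pointwise bound $|K_N(x)|\lesssim_M N^d(1+Nd(x,e))^{-M}$: for $d(x,e)\sim 2^j/N$ the surviving region $|\tau|\gtrsim 2^j$ contributes a quantity you have not bounded. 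The standard repair stays at the $L^2$ level: decompose $\tilde\varphi=\sum_j\tilde\varphi_j$ with $\hat{\tilde\varphi_j}$ supported where $|\tau|\sim 2^j$, note that $|\tilde\varphi_j|\lesssim_{M,K}2^{-jM}(1+|\mu|)^{-K}$, so by Plancherel and the Weyl-law count the kernel $k_N^{(j)}$ of $\tilde\varphi_j(N^{-1}\sqrt{-\Delta})$ satisfies $\|k_N^{(j)}\|_{L^2}\lesssim_M N^{d/2}2^{-jM}$, while finite propagation speed confines $k_N^{(j)}$ to the ball $d(x,e)\lesssim 2^j/N$; Cauchy--Schwarz on that ball gives $\|k_N^{(j)}\|_{L^1}\lesssim 2^{j(d/2-M)}$, and summing over $j$ yields $\|K_N\|_{L^1}\lesssim 1$. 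With that substitution your argument closes and gives a complete, self-contained proof of \eqref{Bernstein} on $G$, independent of the parametrix machinery of \cite{BGT04}.
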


Note that the above proposition in particular implies that \eqref{StrLoc} holds for $N\lesssim 1$ or $p=\infty$.

\subsection{Reduction to a Finite Cover}

\begin{prop}\label{finitecover}
Let $\pi: (\tilde{M},\tilde{g})\to(M,g)$ be a Riemannian covering map between compact Riemannian manifolds (then automatically with finite fibers). Let $\Delta_{\tilde{g}}$, $\Delta_{g}$ be the Laplace-Beltrami operators on $(\tilde{M},\tilde{g})$ and $(M,g)$ respectively and let $\tilde{\mu}$ and $\mu$ be the normalized volume form measures respectively, which define the $L^p$ spaces. Let $\pi^*$ be the pull back map. Define 
$$C^\infty_{\pi}(\tilde{M}):=\pi^*(C^\infty(M)),$$ and similarly define $C_\pi(\tilde{M})$, $L^p_{\pi}(\tilde{M})$ and $H^s_{\pi}(\tilde{M})$.  Then the following statements hold. \\
(i) $\pi^*: C(M)\to C_\pi(\tilde{M})$ and $\pi^*: C^\infty(M)\to C_\pi^\infty(\tilde{M})$ are well-defined and are linear isomorphisms. \\
(ii) $\pi^*: L^p(M)\to L^p_\pi(\tilde{M})$ is well-defined and is an isometry.\\
(iii) $\Delta_{\tilde{g}}$ maps $C^\infty_\pi(\tilde{M})$ into $C^\infty_\pi(\tilde{M})$ and the diagram
\begin{align*}
\xymatrix{
C^\infty(M) 
\ar[r]^{\pi^*} \ar[d]_{\Delta_g} 
& 
C^\infty_\pi(\tilde{M}) \ar[d]^{\Delta_{\tilde{g}}} \\
C^\infty(M) \ar[r]^{\pi^*}& C^\infty_\pi(\tilde{M})
}
\end{align*}
commutes. \\
(iv) $e^{it\Delta_{\tilde{g}}}$ maps $L^2_\pi(\tilde{M})$ into $L^2_\pi(\tilde{M})$ and is an isometry, and the diagrams
\begin{align}\label{eitDeltapicommutes}
\xymatrix{
L^2(M) 
\ar[r]^{\pi^*} \ar[d]_{e^{it\Delta_g}} 
& 
L^2_\pi(\tilde{M}) \ar[d]^{e^{it\Delta_{\tilde{g}}}} \\
L^2(M) \ar[r]^{\pi^*}& L^2_\pi(\tilde{M})
}
\ \ \ \ 
\xymatrix{
L^2(M) 
\ar[r]^{\pi^*} \ar[d]_{P_N} 
& 
L^2_\pi(\tilde{M}) \ar[d]^{P_N} \\
L^2(M) \ar[r]^{\pi^*}& L^2_\pi(\tilde{M})
}
\end{align}
commute, where $P_N$ stands for both $\varphi(N^{-2}\Delta_g)$ and $\varphi(N^{-2}\Delta_{\tilde{g}})$.  \\
(v) $\pi^*: H^s(M)\to H_\pi^s(\tilde{M})$ is well-defined and is an isometry. 
\end{prop}

\begin{proof}
Parts (i), (ii) and (iii) are direct consequences of the definition of a Riemannian covering map. For part (iv), note that (i), (ii) and (iii) together imply that the triples $(L^2(M), C^\infty(M), \Delta_{g})$ and $(L^2_\pi(\tilde{M}), C^\infty_\pi(\tilde{M}), \Delta_{\tilde{g}})$ are isometric as systems of essentially self-adjoint operators on Hilbert spaces, and thus have isometric functional calculus. This implies (iv). Note that the $H^s(M)$ and $H^s_\pi(\tilde{M})$ norms are also defined in terms of the isometric functional calculus of $(L^2(M), C^\infty(M), \Delta_{g})$ and $(L^2_\pi(\tilde{M}), C^\infty_\pi(\tilde{M}), \Delta_{\tilde{g}})$ respectively, which implies (v). 
\end{proof}

Combining Proposition \ref{LP} and \ref{finitecover}, Theorem \ref{Main} is reduced to the following. 

\begin{thm}\label{ReducedMainConj}
Let $K_i$'s be simply connected simple Lie groups and let $G=\mathbb{T}^n\times K_1\times\cdots\times K_m$ be equipped with a rational metric as in Definition \ref{rationalmetric}. Then 
\begin{equation}\label{StrCptGrp2}
\|P_Ne^{it\Delta}f\|_{L^p(I\times G)}\lesssim 
N^{\frac{d}{2}-\frac{d+2}{p}}\|f\|_{L^2(G)}
\end{equation}
holds for $p\geq 2+\frac{8}{r}$ and $N\gtrsim 1$. 
\end{thm}

\subsection{Littlewood-Paley Projections of Product Type}
\label{Littlewood-Paley Projections of the Product Type}

Let $(M,g)$ be the Riemannian product of the compact Riemannian manifolds $(M_j,g_j)$, $j=0,\cdots, m$. 
Any eigenfunction of the Laplace-Beltrami operator $\Delta$ on $M$ with the eigenvalue $\lambda\leq 0$ is of the form $\prod_{j=0}^m\psi_{\lambda_j}$, where each $\psi_{\lambda_j}$ is an eigenfunction of $\Delta_{j}$ on $M_i$ with eigenvalue $\lambda_j\leq 0$, $j=0,\cdots, m$, such that $\lambda=\lambda_0+\cdots+\lambda_n$.

Given any bump function $\varphi$ on $\mathbb{R}$, there always exist bump functions $\varphi_j$, $j=0,\cdots, m$, such that for all $(x_0,\cdots, x_m)\in\mathbb{R}_{\leq 0}^{m+1}$ with $\varphi(x_0+\cdots+x_m)\neq 0$, we have 
$\prod_{j=0}^m\varphi_j(x_j)=1$. In particular, 
\begin{align*}
\varphi\cdot \prod_{j=0}^m\varphi_j(x_j)=\varphi. 
\end{align*}
For $N\geq 1$, define 
\begin{align*}
P_N:&=\varphi(N^{-2}\Delta), \\
{\bf P}_N:&=\varphi_0(N^{-2}\Delta_0)\otimes\cdots\otimes \varphi_m(N^{-2}\Delta_m), 
\end{align*} 
as bounded operators on $L^2(M)$. 
We call ${\bf P}_N$ a \textit{Littlewood-Paley projection of the product type}. We have
\begin{align*}
{\bf P}_N\circ P_N=P_N.
\end{align*} 
This implies that we can further reduce Theorem \ref{ReducedMainConj} into the following. 

\begin{thm}
Let $G=\mathbb{T}^n\times K_1\times\cdots\times K_m$ be equipped with a rational metric. Let $\Delta_0,\Delta_1,\cdots,\Delta_{m}$ be respectively the Laplace-Beltrami operators on $\mathbb{T}^n, K_1,\cdots, K_m$. Let $\varphi_j$ be any bump function for each $j=0,\cdots, m$. For $N\geq 1$, let 
${\bf P}_N=\otimes_{j=0}^{m}\varphi_j(N^{-2}\Delta_j)$. Then 
\begin{equation}\label{StrCptGrp3}
\|{\bf P}_Ne^{it\Delta}f\|_{L^p(I\times G)}\lesssim 
N^{\frac{d}{2}-\frac{d+2}{p}}\|f\|_{L^2(G)}
\end{equation}
holds for $p\geq 2+\frac{8}{r}$ and $N\gtrsim 1$. 
\end{thm}

On the other hand, similarly, for each Littlewood-Paley projection ${\bf P}_N$ of the product type, there exists a bump function $\varphi$ such that $P_N=\varphi(N^{-2}\Delta)$ satisfies $P_N\circ {\bf P}_N={\bf P}_N$. Noting that $\|{\bf P}_Nf\|_{L^2}\lesssim \|f\|_{L^2}$,  \eqref{Bernstein} then implies 
\begin{align}\label{Bernstein'}
\|{\bf P}_Nf\|_{L^r(M)}\lesssim N^{d(\frac{1}{2}-\frac{1}{r})}\|f\|_{L^2(M)}.
\end{align}
for all $2\leq r\leq \infty$.

\section{Preliminaries on harmonic analysis on compact Lie groups}
\label{Preliminaries on harmonic analysis}
\subsection{Fourier Transform}\label{Fourier Transform}

Let $G$ be a compact group and $\hat{G}$ be its Fourier dual, i.e. the set of equivalent classes of irreducible unitary representations of $G$. For $\lambda\in\hat{G}$, let $\pi_\lambda: V_\lambda\to V_\lambda$ be the irreducible unitary representation in the class $\lambda$, and let $d_\lambda=\text{dim}(V_\lambda)$. Let $\mu$ be the normalized Haar measure on $G$. Then for $f\in L^2(G)$, define the Fourier transform
\begin{equation*}
\hat{f}(\lambda)=\int_G f(x)\pi_{\lambda}(x^{-1})d\mu.
\end{equation*}
Then the inverse Fourier transform 
\begin{equation*}
f(x)=\sum_{\lambda\in\hat{G}}d_\lambda\text{tr}(\hat{f}(\lambda)\pi_\lambda(x))
\end{equation*}
converges in $L^2(G)$. 
We have the Plancherel identities
\begin{equation}\label{Plancherel}
\|f\|_{L^2(G)}=(\sum_{\lambda\in\hat{G}}d_\lambda\|\hat{f}(\lambda)\|^2_{HS})^{1/2},
\end{equation}
\begin{equation}\label{PlancherelInnerproduct}
\langle f,g\rangle_{L^2(G)}=\sum_{\lambda\in\hat{G}}d_\lambda
\text{tr}(\hat{f}(\lambda)\hat{g}(\lambda)^*).
\end{equation}
Here $\|\cdot\|_{HS}$ denotes the Hilbert-Schmidt norm of endomorphisms.  

For the convolution 
\begin{equation*}
(f*g)(x)=\int_{G}f(xy^{-1})g(y)\ d\mu(y),
\end{equation*}
we have
\begin{equation}\label{f*ghat}
(f*g)^{\wedge}(j)=\hat{f}(j)\hat{g}(j),
\end{equation}
If $\hat{g}(\lambda)=c_\lambda\cdot \text{Id}_{d_\lambda\times d_\lambda}$, where $c_\lambda$ is a scalar, then
\begin{equation}\label{convolutionL2}
\|f*g\|_{L^2(G)}\leq\sup_{\lambda}|c_\lambda|\cdot\|f\|_{L^2(G)}.
\end{equation}
We also have the Hausdorff-Young inequality
\begin{align}\label{Hausdorff-Young}
\|\hat{f}(\lambda)\|_{HS}\leq d_\lambda^{1/2}\|f\|_{L^1(G)} \ \text{ for all } \lambda\in\hat{G}.
\end{align}

\subsection{Root System and the Laplace-Beltrami Operator}
\label{Representation Theory of Compact Semisimple Lie Groups}

Let $G$ be a compact simply connected semisimple Lie group of dimension $d$ and $\mathfrak{g}$ be its Lie algebra, and let $\mathfrak{g}_{\mathbb{C}}$ denote the complexification of $\mathfrak{g}$. Choose a \textit{maximal torus} $B\subset G$ and let $r$ be the dimension of $B$. Let $\mathfrak{b}$ be the Lie algebra of $B$, which is a \textit{Cartan subalgebra} of $\mathfrak{g}$, and let $\mathfrak{b}_{\mathbb{C}}$ denote its complexification. The Fourier dual $\hat{B}$ of $B$ is isomorphic to a lattice $\Lambda\subset i\mathfrak{b}^*$, which is the \textit{weight lattice}, under the isomorphism 
\begin{align}\label{LambdahatB}
\Lambda\xrightarrow{\sim}\hat{B},\ \  \lambda\mapsto e^\lambda. 
\end{align}


We have the \textit{root space decomposition} $\mathfrak{g}_{\mathbb{C}}=\mathfrak{b}_{\mathbb{C}}\oplus(\oplus_{\alpha\in \Phi}\mathfrak{g}_{\mathbb{C}}^{\alpha})$. 
Here $\Phi\subset i\mathfrak{b}^*$, 
$$\mathfrak{g}_{\mathbb{C}}^{\alpha}=\{X\in\mathfrak{g}_{\mathbb{C}}:\text{Ad}_b(X)=e^{\alpha}(b)X\text{ for all }b\in B\},$$
and $\text{dim}_{\mathbb{C}}\mathfrak{g}_{\mathbb{C}}^{\alpha}=1$. This implies 
\begin{align}\label{|Phi|+r=d}
|\Phi|+r=d. 
\end{align}
The Cartan-Killing form $\langle\ ,\ \rangle$ on $i\mathfrak{b}^*$  becomes a real inner product, and $(\Psi, \langle\ ,\ \rangle)$ becomes an \textit{integral root system}, that is, a finite set $\Phi$ in a finite dimensional real inner product space with the following requirements
\begin{equation}\label{RootSystem}
\left\{
\begin{array}{rl}
\text{(i)} \ \ &\Phi=-\Phi;\\
\text{(ii)}\ \ &\alpha\in \Phi, k\in\mathbb{R}, k\alpha\in\Phi \Rightarrow k=\pm1; \\
\text{(iii)} \ \ &s_\alpha\Phi=\Phi \text{ for all }\alpha\in\Phi;\\
\text{(iv)} \ \ &2\frac{\langle\alpha,\beta\rangle}{\langle\alpha,\alpha\rangle}\in\mathbb{Z} \text{ for all }\alpha,\beta\in\Phi.
\end{array}
\right.
\end{equation}
Here $s_\alpha$ is the reflection about the hyperplane $\alpha^{\perp}$ orthogonal to $\alpha$, that is,
\begin{equation*}
s_\alpha(x):=x-2\frac{\langle x,\alpha\rangle}{\langle\alpha,\alpha\rangle}\alpha.
\end{equation*}
Let $P$ be a system of positive roots such that $\Phi=P\sqcup -P$. Then by \eqref{|Phi|+r=d}, we have 
\begin{align}\label{|P|}
|P|=\frac{d-r}{2}. 
\end{align} 
We can describe the weight lattice $\Lambda$ purely in terms of the root system
\begin{align*}
\Lambda=\{\lambda\in i\mathfrak{b}^*\mid \frac{2\langle\lambda,\alpha\rangle}{\langle\alpha,\alpha\rangle}\in\mathbb{Z}, \text{ for all }\alpha\in\Phi \}. 
\end{align*}
The set $\Phi$ of roots generate the \textit{root lattice} $\Gamma$ and we have $\Gamma\subset \Lambda$ and $\Lambda/\Gamma$ is finite.   

Let
\begin{align*}
\Lambda^+:=\{\lambda\in i\mathfrak{b}^*\mid \frac{2\langle\lambda,\alpha\rangle}{\langle\alpha,\alpha\rangle}\in\mathbb{Z}_{\geq 0}, \text{ for all }\alpha\in P \}
\end{align*}
be the set of dominant weights. We describe $\Lambda, \Lambda^+$ in terms of a basis. Let $\{\alpha_1,\cdots,\alpha_r\}$ be the set of simple roots in $P$. Let $\{w_1,\cdots, w_r\}$ be the corresponding fundamental weights, i.e. the dual basis to the coroot basis $\{\frac{2\alpha_1}{\langle\alpha_1,\alpha_1\rangle}, \cdots, \frac{2\alpha_r}{\langle\alpha_r,\alpha_r\rangle}\}$. 
Then 
\begin{align*}
\Lambda&=\mathbb{Z}w_1+\cdots+\mathbb{Z}w_r, \\
\Lambda^+&=\mathbb{Z}_{\geq 0}w_1+\cdots+\mathbb{Z}_{\geq 0}w_r.
\end{align*}
Let 
\begin{align}\label{fundamentalchamber}
C=\mathbb{R}_{> 0}w_1+\cdots+\mathbb{R}_{> 0}w_r
\end{align}
be the \textit{fundamental Weyl chamber}, and we have the decomposition 
\begin{align}\label{chamberdecomposition}
i\mathfrak{b}^*=(\bigsqcup_{s\in W} sC)\  \sqcup\  (\bigcup_{\alpha\in\Phi}\{\lambda\in i\mathfrak{b}^*: \langle\lambda,\alpha\rangle=0\}),
\end{align}
where $W$ is the Weyl group. Here $\sqcup$ stands for disjoint union.

Define 
\begin{equation}\label{definition of rho}
\rho:=\frac{1}{2}\sum_{\alpha\in P}\alpha=\sum_{i=1}^rw_i.
\end{equation} 
Then we have
\begin{align*}
\hat{G}\cong \Lambda^+
\end{align*}
such that the irreducible representation $\pi_\lambda$ corresponding to $\lambda\in\Lambda^+$ has the character $\chi_\lambda$ and dimension $d_\lambda$ given by Weyl's formulas
\begin{align}
\chi_{\lambda}\Large|_{B}&=\frac{\sum_{s\in W}(\det s) e^{s(\lambda+\rho)}}{\sum_{s\in W}(\det s) e^{s\rho}},\label{CharacterChi}\\
d_\lambda&=\frac{\prod_{\alpha\in P}\langle\alpha,\lambda+\rho\rangle}{\prod_{\alpha\in P}\langle\alpha,\rho\rangle}.\label{Weyldimension}
\end{align}

Let $H\in\mathfrak{b}$. We can think of $-iH$ as a real linear functional on $i\mathfrak{b}^*$, and by the Cartan-Killing inner product on $i\mathfrak{b}^*$, we thus get a correspondence between $H\in\mathfrak{b}$ and an element in $i\mathfrak{b}^*$,  still denoted as $H$. Under this correspondence, $e^{\lambda(H)}=e^{i\langle\lambda, H\rangle}$ and we rewrite Weyl's character formula as 
\begin{align}\label{WeylCharacter}
\chi_{\lambda}(\exp H)=\frac{\sum_{s\in W}\det s\ e^{i\langle s(\lambda+\rho), H\rangle}}{\sum_{s\in W}\det s\ e^{i\langle\rho, H\rangle}}.
\end{align}
Also under this correspondence between $\mathfrak{b}$ and $i\mathfrak{b}^*$, we have 
\begin{align*}
B\cong i\mathfrak{b}^*/2\pi \Gamma^{\vee}, 
\end{align*}
where 
\begin{align*}
\Gamma^{\vee}=\mathbb{Z}\frac{2\alpha_1}{\langle\alpha_1,\alpha_1\rangle}+\cdots+\mathbb{Z}\frac{2\alpha_r}{\langle\alpha_r,\alpha_r\rangle}
\end{align*}
is the \textit{coroot lattice}. 

We define the \textit{cells} to be the connected components of $\{H\in i\mathfrak{b}^*/2\pi \Gamma^{\vee}\mid\langle\alpha, H\rangle\notin 2\pi \mathbb{Z}\}$ and call 
$\{H\in i\mathfrak{b}^*/2\pi \Gamma^{\vee}\mid\langle\alpha, H\rangle\in 2\pi \mathbb{Z}\}$ the \textit{cell walls}.

We also record here Weyl's integral formula. 
Let $f\in L^1(G)$ be invariant under the adjoint action of $G$. Then 
\begin{equation}\label{Weyl integration formula}
\int_G f\ d\mu=\frac{1}{|W|}\int_B f(b)|D_P(b)|^2\ db
\end{equation}
Here $d\mu, db$ are respectively the normalized Haar measures of $G$ and $B$, and 
\begin{equation*}
D_P(H)=\sum_{s\in W}\det s\ e^{i\langle\rho, H\rangle}
\end{equation*} 
is the \textit{Weyl denominator}.

Finally we describe the functional calculus of the Laplace-Beltrami operator $\Delta$. Given any irreducible unitary representation $(\pi_\lambda, V_\lambda)$ of $G$ in the class $\lambda\in\hat{G}\cong\Lambda^+$, the operator $\Delta$ acts on the space $\mathcal{M}_\lambda=\{\text{tr}(\pi_\lambda T)\mid T\in\text{End}(V_\lambda)\}$ of matrix coefficients by 
\begin{align*}
\Delta f=-k_\lambda f, \ \ \text{for all }f\in\mathcal{M}_\lambda,  \ \lambda\in \hat{G},
\end{align*}
where 
\begin{equation}\label{klambda}
k_\lambda=|\lambda+\rho|^2-|\rho|^2.
\end{equation}
Let $f\in L^2(G)$ and consider the inverse Fourier transform 
$
f(x)=\sum_{\lambda\in\Lambda^+}d_\lambda\text{tr}(\pi_\lambda(x)\hat{f}(\lambda))$, then for any bounded Borel function $F:\mathbb{R}\to\mathbb{C}$, we have 
\begin{equation*}
F(\Delta)f=\sum_{\lambda\in\Lambda^+}F(-k_\lambda)d_\lambda\text{tr}(\pi_\lambda(x)\hat{f}(\lambda)). 
\end{equation*}
In particular, we have 
\begin{equation}\label{SchFlo}
e^{it\Delta}f=\sum_{\lambda\in\Lambda^+}e^{-itk_\lambda}d_\lambda\text{tr}(\pi_\lambda(x)\hat{f}(\lambda)),
\end{equation}
\begin{equation}\label{SchKer}
P_Ne^{it\Delta}f=\sum_{\lambda\in\Lambda^+}\varphi(-\frac{k_\lambda}{N^2})e^{-itk_\lambda}d_\lambda\text{tr}(\pi_\lambda(x)\hat{f}(\lambda)).
\end{equation}

\begin{exmp}\label{SU21}
Let $M=\text{SU}(2)$, which is of dimension 3 and rank 1.  Let $\mathfrak{a}\cong\mathbb{R}$ be the Cartan subalgebra and 
$A\cong \mathbb{R}/2\pi\mathbb{Z}$ be the maximal torus. The root system is $\{\pm \alpha\}$, where $\alpha$ acts on $\mathfrak{a}$ by $\alpha(\theta)=2\theta$. The fundamental weight  is $w=\frac{1}{2}\alpha$. We normalize the Cartan-Killing form so that  
$|w|=1$. 
The Weyl group $W$ is of order 2, and acts on $\mathfrak{a}$ as well as $\mathfrak{a}^*$ through multiplication by $\pm 1$. For $m\in \mathbb{Z}_{\geq 0}\cong\mathbb{Z}_{\geq 0}w=\Lambda^+$, we have 
\begin{align}
d_m&=m+1,\label{dm} \\ 
\chi_{m}(\theta)&=
\frac{e^{i(m+1)\theta}-e^{-i(m+1)\theta}}
{e^{i\theta}-e^{-i\theta}}=\frac{\sin (m+1)\theta}{\sin\theta}, \ \ \theta\in\mathbb{R}/2\pi\mathbb{Z}, \label{chim}\\
k_m&=(m+1)^2-1. \label{km}
\end{align}
\end{exmp}

\section{The Schr\"odinger Kernel}
\label{The Schrodinger Kernel}

Let $f\in L^2(G)$. Then \eqref{SchKer} implies  
\begin{equation*}
(P_Ne^{it\Delta }f)^{\wedge}(\lambda)=\varphi(\frac{k_\lambda}{N^2})e^{-itk_\lambda}\hat{f}(\lambda). 
\end{equation*} 
Define
\begin{equation*}
(K_N(t,\cdot))^{\wedge}(\lambda)
=\varphi(\frac{k_\lambda}{N^2})e^{-itk_\lambda}\text{Id}_{d_\lambda\times d_\lambda},
\end{equation*}
which implies 
\begin{align}\label{kernel} 
K_N(t,x)=\sum_{\lambda\in\Lambda^+}\varphi(\frac{k_\lambda}{N^2})e^{-itk_\lambda}d_\lambda\chi_\lambda(x). 
\end{align}
Then we can write 
\begin{equation*}
P_Ne^{it\Delta}f=K_N(t,\cdot)*f=f*K_N(t,\cdot),
\end{equation*}
and we call $K_N(t,x)$ the \textit{Schr\"odinger kernel}. Incorporating  \eqref{Weyldimension}, \eqref{WeylCharacter} and \eqref{klambda} into \eqref{kernel}, we get 
\begin{align}\label{SchrodingerKernel}
K_N(t,x)=\sum_{\lambda\in\Lambda^+}e^{-it(|\lambda+\rho|^2-|\rho|^2)}
\varphi(\frac{|\lambda+\rho|^2-|\rho|^2}{N^2})\frac{\prod_{\alpha\in P}\langle \alpha,\lambda+\rho\rangle}{\prod_{\alpha\in P}\langle \alpha,\rho\rangle}
\frac{\sum_{s\in W}\det{(s)}e^{i\langle s(\lambda+\rho),H\rangle}}{\sum_{s\in W}\det{(s)}e^{i\langle s(\rho), H\rangle}}
\end{align}

\begin{exmp} 
Specializing the Schr\"odinger kernel \eqref{SchrodingerKernel} to $G=\text{SU}(2)$, using \eqref{dm}, \eqref{chim}, and \eqref{km}, we have
\begin{align}\label{kernel for SU(2)}
K_N(t,\theta)&=\sum_{m=0}^\infty\varphi(\frac{(m+1)^2-1}{N^2})(m+1)e^{-i((m+1)^2-1)t}\frac{e^{i(m+1)\theta}-e^{-i(m+1)\theta}}
{e^{i\theta}-e^{-i\theta}}, \ \ \theta\in\mathbb{R}/2\pi\mathbb{Z}.
\end{align}

\end{exmp}

More generally, let $G=\mathbb{R}^n/2\pi\Gamma_0\times K_1\times\cdots\times K_m$ be equipped with a rational metric $g$ as in Definition \ref{rationalmetric}. Let $\Lambda_0$ be the dual lattice of $\Gamma_0$, and $\Lambda_j$ be the weight lattice for $K_j$, $j=1,\cdots, m$. Let ${\bf P}_N=\otimes_{j=0}^{m}\varphi_j(N^{-2}\Delta_j)$ be a Littlewood-Paley projection of the product type as described in Section \ref{Littlewood-Paley Projections of the Product Type}. Define the \textit{Schr\"odinger kernel} ${\bf K}_N$ on $G$ by 
\begin{align}\label{DefnSchr}
{\bf P}_Ne^{it\Delta}f=f*{\bf K}_N(t,\cdot)={\bf K}_N(t,\cdot)*f.
\end{align}
Then 
\begin{align}\label{TheKernel}
{\bf K}_N=\prod_{j=0}^{m}K_{N,j},
\end{align}
where the $K_{N,j}$'s are respectively the Schr\"odinger kernels on each component of $G$
\begin{align*}
K_{N,0}&=\sum_{\lambda_0\in\Lambda_0}\varphi_0(\frac{-|\lambda_0|^2}{\beta_0 N^2})e^{-it\beta_0^{-1}|\lambda_0|^2}e^{i\langle\lambda_0,H_0\rangle}, \\
K_{N, j}&=\sum_{\lambda_j\in\Lambda_j^+}\varphi_{j}(\frac{-|\lambda_j+\rho_j|^2+|\rho_j|^2}{\beta_jN^2})e^{it\beta_j^{-1}(-|\lambda_j+\rho_j|^2+|\rho_j|^2)}d_{\lambda_j}\chi_{\lambda_j},
\end{align*} 
$j=1,\cdots, m$. Here the $\rho_j$'s are defined in terms of \eqref{definition of rho}. We also write 
\begin{align*}
{\bf K}_N=\sum_{\lambda\in\widehat{G}}\varphi(\lambda, N) e^{-itk_\lambda}d_\lambda\chi_\lambda, 
\end{align*}
where
\begin{align*}
\lambda&=(\lambda_0,\ldots,\lambda_m)\in 
\widehat{G}=\Lambda_0 \times \Lambda_1^+\times\cdots\times\Lambda_m^+, \\
-k_\lambda&=-\beta_0^{-1}|\lambda_0|^2+\sum_{j=1}^m\beta_j^{-1}(-|\lambda_j+\rho_j|^2+|\rho_j|^2),\numberthis \label{TheEigenvalue}\\
\varphi(\lambda, N)&=\varphi_0(\frac{-|\lambda_0|^2}{\beta_0N^2})\cdot\prod_{j=1}^n\varphi_{j}(\frac{-|\lambda_j+\rho_j|^2+|\rho_j|^2}{\beta_jN^2}),\numberthis \label{cutoff}\\
d_\lambda&=\prod_{j=1}^m d_{\lambda_j}, \ \chi_\lambda=e^{i\langle\lambda_0,H_0\rangle}\prod_{j=1}^m\chi_{\lambda_j}. 
\end{align*}

Tracking all the definitions, we get the following lemma. 

\begin{lem} \label{CountingLemma}
Let $d,r$ be respectively the dimension and rank of $G$.  \\
(i) $|\{\lambda\in\widehat{G}\mid k_\lambda\lesssim N^2\}|\lesssim N^r$.\\
(ii) $d_\lambda\lesssim N^{\frac{d-r}{2}}$, uniformly for all $\lambda\in\hat{G}$ such that $k_\lambda\lesssim N^2$. 
\end{lem}

Now we interpret the Strichartz estimates on $G$ as \textit{Fourier restriction estimates}.  

\begin{lem}\label{DGamma}
For a compact simply connected semisimple Lie group $G$ and its weight lattice $\Lambda$, there exists $D\in{\mathbb{N}}$ such that $\langle\lambda_1,\lambda_2\rangle\in D^{-1}{\mathbb{Z}}$ for all $\lambda_1,\lambda_2\in\Lambda$. 
\end{lem}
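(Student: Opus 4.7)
The plan is to reduce the statement to pairings among the fundamental weights $w_1,\ldots,w_r$ and to exhibit an explicit rational matrix of such pairings. Since $\Lambda=\mathbb{Z}w_1\oplus\cdots\oplus\mathbb{Z}w_r$, any $\lambda_1,\lambda_2\in\Lambda$ expand with integer coefficients, and bilinearity of $\langle\,,\,\rangle$ gives
\[
\langle\lambda_1,\lambda_2\rangle\in\sum_{i,j=1}^r \mathbb{Z}\,\langle w_i,w_j\rangle.
\]
So it suffices to produce a single $D\in\mathbb{N}$ such that $\langle w_i,w_j\rangle\in D^{-1}\mathbb{Z}$ for all $i,j$.

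The first concrete step is to use the defining property of fundamental weights recalled in the excerpt: $\langle w_i,2\alpha_j/|\alpha_j|^2\rangle=\delta_{ij}$, so $\langle w_i,\alpha_j\rangle=\tfrac{1}{2}|\alpha_j|^2\delta_{ij}$. Writing $\alpha_i=\sum_k C_{ik}w_k$ with Cartan integers $C_{ik}=2\langle\alpha_i,\alpha_k\rangle/|\alpha_k|^2\in\mathbb{Z}$, the Cartan matrix $C=(C_{ik})$ is invertible over $\mathbb{Q}$, and expressing $w_k=\sum_i (C^{-1})_{ki}\alpha_i$ together with the previous formula yields
\[
\langle w_k,w_l\rangle=(C^{-1})_{kl}\,\tfrac{|\alpha_l|^2}{2}.
\]
Thus it remains to check that each $|\alpha_l|^2$ is rational for the Cartan--Killing form (the $(C^{-1})_{kl}$ are already rational since $C$ has integer entries).

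The key intermediate fact is the trace identity for $\langle\,,\,\rangle$ restricted to $\mathfrak{b}_\mathbb{C}$: using the root space decomposition one has $\langle H,H'\rangle=\sum_{\gamma\in\Phi}\gamma(H)\gamma(H')$, which transports under $\mathfrak{b}_\mathbb{C}\cong i\mathfrak{b}^*$ to
\[
\langle\alpha,\beta\rangle=\sum_{\gamma\in\Phi}\langle\alpha,\gamma\rangle\langle\gamma,\beta\rangle.
\]
Combining with $\langle\alpha,\gamma\rangle=\tfrac{1}{2}C_{\alpha\gamma}|\gamma|^2$ (integer Cartan coefficient times half the squared length) produces a system of polynomial equations among the finitely many distinct values $|\gamma|^2$ (there are at most two length classes), whose solutions are forced to be positive rational numbers. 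Equivalently, one may invoke the classical formula identifying $|\alpha|^2$ (for the Cartan--Killing form) with an explicit rational function of the dual Coxeter number for each irreducible type.

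With rationality of the $|\alpha_l|^2$ established, each $\langle w_k,w_l\rangle$ lies in $\mathbb{Q}$; taking $D$ to be the least common multiple of the denominators of the finitely many entries of the Gram matrix $(\langle w_k,w_l\rangle)_{k,l=1}^r$ completes the proof. I expect the main (admittedly mild) obstacle to be the verification that the Cartan--Killing form, which is defined intrinsically rather than chosen, does produce rational squared root-lengths; the trace identity above is the cleanest route, but a case-by-case invocation of the classification would also suffice. Once this point is granted, everything else is straightforward linear algebra over the Cartan matrix.
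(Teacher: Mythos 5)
Your proposal is essentially the paper's argument: reduce by bilinearity to the Gram matrix of the fundamental weights, express the $w_i$ rationally in terms of the simple roots via the (integer, invertible) Cartan matrix, conclude rationality of all $\langle w_i,w_j\rangle$, and take $D$ to clear the finitely many denominators. The only divergence is at the input "root pairings are rational for the Cartan--Killing form": the paper simply cites Lemma 4.3.5 of Varadarajan, whereas you sketch a proof from the trace identity $\langle\alpha,\beta\rangle=\sum_{\gamma\in\Phi}\langle\alpha,\gamma\rangle\langle\gamma,\beta\rangle$. That route is indeed the standard one, but your phrasing of the deduction ("a system of polynomial equations \dots whose solutions are forced to be positive rational numbers") is not a valid inference as stated — rational polynomial systems need not have rational solutions. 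The clean way to finish from your identity is the usual division trick: taking $\alpha=\beta$ gives $|\beta|^2=\sum_{\gamma\in\Phi}\langle\gamma,\beta\rangle^2$, and dividing by $|\beta|^4$ yields
\begin{equation*}
\frac{1}{|\beta|^2}=\frac{1}{4}\sum_{\gamma\in\Phi}\Bigl(\frac{2\langle\gamma,\beta\rangle}{|\beta|^2}\Bigr)^{2}\in\tfrac{1}{4}\mathbb{Z}_{>0},
\end{equation*}
so $|\beta|^2\in\mathbb{Q}$, and then $\langle\alpha,\beta\rangle=\tfrac12|\beta|^2\cdot\frac{2\langle\alpha,\beta\rangle}{|\beta|^2}$ is rational since the last factor is a Cartan integer. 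With that one sentence repaired (or with the citation the paper uses, or your classification fallback), your proof is complete and buys a self-contained justification of the rationality input that the paper outsources to a reference.
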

\begin{proof}
Let $\Phi$ be the set of roots for $G$. Then by Lemma 4.3.5 in \cite{Var84}, $\langle\alpha,\beta\rangle$ are rational numbers for all $\alpha,\beta\in \Phi$. Let $S=\{\alpha_1,\cdots,\alpha_r\}\subset \Phi$ be a system of simple roots. Since the set of fundamental weights $\{w_1,\cdots,w_n\}$ forms a dual basis to $\{\frac{2\alpha_1}{\langle\alpha_1,\alpha_1\rangle},\cdots, \frac{2\alpha_r}{\langle\alpha_r,\alpha_r\rangle}\}$ with respect to the Cartan-Killing form $\langle\ ,\ \rangle$, and $\langle\alpha_i,\alpha_j\rangle$ are rational numbers for all $i,j=1,\cdots,r$, we have that the $w_j$'s can be expressed as linear combinations of the $\alpha_j$'s with rational coefficients. This implies that $\langle w_i,w_j\rangle$ are rational numbers for all $i,j=1,\cdots,r$. Since there are only finitely many such numbers as $\langle w_i,w_j\rangle$, there exists $D\in\mathbb{N}$ so that $\langle w_i,w_j\rangle\in D^{-1}\mathbb{Z}$ for all $i,j=1,\cdots,r$. Thus $\langle\lambda_1,\lambda_2\rangle\in D^{-1}\mathbb{Z}$ for all $\lambda_1,\lambda_2\in\Lambda$, since $\Lambda=\mathbb{Z}w_1+\cdots+\mathbb{Z}w_n$.  
\end{proof}

For $G=\mathbb{R}^n/2\pi\Gamma_0\times K_1\times\cdots\times K_m$, by the previous lemma, there exists for each $j=1,\cdots,m$ some $D_j\in\mathbb{N}$ such that $\langle\lambda,\mu\rangle\in D_j^{-1}\mathbb{Z}$ for all $\lambda,\mu\in \Lambda_j^+$, which implies by \eqref{definition of rho} that 
$$-|\lambda_j+\rho_j|^2+|\rho_j|^2=-|\lambda_j|^2-\langle\lambda_j,2\rho_j\rangle\in D_j^{-1}\mathbb{Z}$$ for all $\lambda_j\in \Lambda_j$. Also recall that we require that there exists some $D\in\mathbb{N}$ such that $\langle u,v \rangle\in D^{-1}\mathbb{Z}$ for all $u,v\in\Gamma_0$. This implies that there also exists some $D_0\in\mathbb{N}$ such that $\langle\lambda,\mu\rangle\in D_0^{-1}\mathbb{Z}$ for all $\lambda,\mu\in \Lambda_0$. 
By Definition \ref{rationalmetric} of a rational metric, there exists some $D_{*}>0$ such that 
\begin{align*}
\beta_0^{-1},\cdots, \beta_m^{-1}\in D_{*}^{-1}\mathbb{N}. 
\end{align*}
Define 
\begin{align}\label{ThePeriod}
T=2\pi D_{*}\cdot\prod_{j=0}^m D_j.
\end{align}
Then \eqref{TheEigenvalue} implies that $T k_\lambda\in 2\pi\mathbb{Z}$, which then implies that the Schr\"odinger kernel as in \eqref{TheKernel} is periodic in $t$ with a period of $T$. 
Thus we may view the time variable $t$ as living on the circle $\mathbb{T}=\mathbb{R}/T\mathbb{Z}$. 
Now the formal dual to the operator 
\begin{align}\label{T}
{\bf T}: L^2(G)\to L^p(\mathbb{T}\times G), \ \ f\mapsto {\bf P}_Ne^{it\Delta}
\end{align}
is computed to be 
\begin{align}\label{T^*}
{\bf T}^*: L^{p'}(\mathbb{T}\times G)\to L^2(G), \ \ F\mapsto \int_{\mathbb{T}}{\bf P}_Ne^{-is\Delta}F(s,\cdot)\ \frac{ds}{T},
\end{align}
and thus 
\begin{align}\label{TT^*}
{\bf T}{\bf T}^*: L^{p'}(\mathbb{T}\times G)\to L^{p}(\mathbb{T}\times G), \ \ F\mapsto \int_{\mathbb{T}}{\bf P}_N^2e^{i(t-s)\Delta}F(s,\cdot)\ \frac{ds}{T}
=\tilde{{\bf K}}_N*F,
\end{align}
where 
\begin{align*}
\tilde{{\bf K}}_N=\sum_{\lambda\in\hat{G}}\varphi^2(\lambda, N)e^{-itk_\lambda}d_\lambda\chi_\lambda={\bf K}_N*{\bf K}_N.
\end{align*}
Note that the cutoff function $\varphi^2(\lambda, N)$ still defines a Littlewood-Paley projection of the product type and $\tilde{\bf K}_N$ is the associated Schr\"odinger kernel. 
Now the argument of ${\bf TT}^*$ says that the boundedness of the operators \eqref{T}, \eqref{T^*} and \eqref{TT^*} are all equivalent, thus the Strichartz estimate in \eqref{StrLoc} is equivalent to the \textit{space-time Strichartz estimate} 
\begin{equation}\label{SpaceTimeStr}
\|\tilde{{\bf K}}_N*F\|_{L^p(\mathbb{T}\times G)}\lesssim N^{d-\frac{2(d+2)}{p}}\|F\|_{L^{p'}(\mathbb{T}\times G)}.
\end{equation} 

We have the \textit{space-time Fourier transform} on 
$\mathbb{T}\times G$ as follows. For $(n,\lambda)\in \frac{2\pi}{T}{\mathbb{Z}}\times \hat{G}$, we have 
\begin{align}\label{STFTKN}
\widehat{{\bf K}}_N(n,\lambda)=\left\{\begin{array}{ll}\varphi(\lambda, N)\cdot \text{Id}_{d_\lambda\times d_\lambda},&\text{ if }n=-k_{\lambda},\\
0, & \text{ otherwise}.
\end{array}\right.
\end{align} Similarly, for $f\in L^2(G)$, we have 
\begin{align}\label{STFTPNf}
({\bf P}_Ne^{it\Delta}f(x))^{\wedge}(n,\lambda)=\left\{\begin{array}{ll}\varphi(\lambda, N)\cdot \hat{f}(\lambda),&\text{ if }n=-k_{\lambda},\\
0, & \text{ otherwise}.
\end{array}\right.
\end{align}
For $m(t)=\sum_{n\in \frac{2\pi}{T}\mathbb{Z}}\hat{m}(n)e^{itn}$, we compute 
\begin{align}\label{F transform of kernel}
(m{\bf K}_N)^{\wedge}(n,\lambda)=\hat{m}(n+k_\lambda)
\varphi(\lambda, N)\text{Id}_{d_\lambda\times d_\lambda}.
\end{align}

\section{The Stein-Tomas Argument}
\label{The Stein-Tomas Argument}
Throughout this section, $\mathbb{S}^1$ stands for the standard circle of unit length, and $\|\cdot\|$ stands for the distance from 0 on $\mathbb{S}^1$. Define
\begin{align*}
\mathcal{M}_{a,q}:=\{t\in\mathbb{S}^1\mid \|t-\frac{a}{q}\|<\frac{1}{qN}\}
\end{align*}
where  
\begin{align*}
a\in\mathbb{Z}_{\geq 0}, \ \ q\in\mathbb{N}, \ \ a<q, \ \ (a,q)=1, \ \ q<N.  
\end{align*}
We call such $\mathcal{M}_{a,q}$'s as \textit{major arcs}, which are reminiscent of the Hardy-Littlewood circle method. 
We will prove the following key dispersive estimate. 

\begin{thm}\label{dispersive for Schrodinger}
Let ${\bf K}_N$ be the Schr\"{o}dinger kernel \eqref{TheKernel} and $T$ be the period \eqref{ThePeriod}. Then  
\begin{align*}
|{\bf K}_N(t,x)|\lesssim \frac{N^d}{(\sqrt{q}(1+N\|\frac{t}{2\pi D}-\frac{a}{q}\|^{1/2}))^{r}}
\end{align*}
for $\frac{t}{2\pi D}\in\mathcal{M}_{a,q}$, uniformly in $x\in G$.  
\end{thm}

Noting the product structure \eqref{TheKernel} of ${\bf K}_N$, the above theorem reduces to the cases on irreducible components of $G$. 

\begin{thm}\label{MainEstimate}
(i) Given $G=\mathbb{T}^d=\mathbb{R}^d/2\pi\Gamma$ such that there exists $D\in\mathbb{R}$ for which $\langle\lambda,\mu\rangle\in D^{-1}\mathbb{Z}$ for all $\lambda,\mu\in\Gamma$. Then the Schr\"{o}dinger kernel 
\begin{align*}
K_N(t,H)=\sum_{\lambda\in\Lambda}\varphi(\frac{|\lambda|^2}{N^2})e^{-it|\lambda|^2+i\langle\lambda, H\rangle}
\end{align*}
satisfies 
\begin{align*}
|K_N(t,H)|\lesssim (\frac{N}{\sqrt{q}(1+N\|\frac{t}{2\pi D}-\frac{a}{q}\|^{\frac{1}{2}})})^d
\end{align*}
for $\frac{t}{2\pi D}\in\mathcal{M}_{a,q}$, uniformly in $H\in\mathbb{T}^n$. \\
(ii) Let $G$ be a compact simply connected semisimple Lie group. Let $\Lambda$ be the weight lattice for which $\langle\lambda,\mu\rangle\in D^{-1}\mathbb{Z}$ for all $\lambda,\mu\in\Lambda$ for some $D\in\mathbb{R}$. Let $K_N$ be the Schr\"{o}dinger kernel as defined in \eqref{SchrodingerKernel}. Then 
\begin{align}\label{KeyEst}
|K_N(t,x)|\lesssim \frac{N^d}{(\sqrt{q}(1+N\|\frac{t}{2\pi D}-\frac{a}{q}\|^{1/2}))^{r}}
\end{align}
for $\frac{t}{2\pi D}\in\mathcal{M}_{a,q}$, uniformly in $x\in G$. 
\end{thm}

We will prove this theorem in the next section. Now we show how this theorem implies Strichartz estimates. 

\begin{thm}\label{simpleconnected}
Let $G=\mathbb{T}^n\times K_1\times \cdots\times K_m$ be equipped with a rational metric $\tilde{g}$ and $T$ be a period of the Schr\"odinger flow as in \eqref{ThePeriod}. Let $d,r$ be the dimension and rank of $G$ respectively. 
Let $f\in L^2(G)$, $\lambda>0$ and define 
\begin{align*}
m_\lambda=\mu\{(t,x)\in \mathbb{T}\times G\mid |P_Ne^{it\Delta}f(x)|>\lambda|\}
\end{align*}
where $\mu=dt\cdot d\mu_G$, with $dt$ being the standard measure on $\mathbb{T}=\mathbb{R}/ T \mathbb{Z}$ and $d\mu_G$ being the Haar measure on $G$. 
Let 
\begin{align*}
p_0=\frac{2(r+2)}{r}.
\end{align*} 
Then the following statements hold true. \\
(I)
\begin{align*}
m_\lambda\lesssim_{\varepsilon} N^{\frac{dp_0}{2}-(d+2)+\varepsilon}\lambda^{-p_0}\|f\|^{p_0}_{L^{2}(G)}, \ \ \text{for all }\lambda\gtrsim N^{\frac{d}{2}-\frac{r}{4}}, \ \ \varepsilon>0.
\end{align*} 
(II)
\begin{align*}
m_\lambda\lesssim N^{\frac{dp}{2}-(d+2)}\lambda^{-p}\|f\|^{p}_{L^{2}(G)}, \ \ \text{for all }\lambda\gtrsim N^{\frac{d}{2}-\frac{r}{4}}, \ \ p>p_0.
\end{align*}
(III) 
\begin{align}\label{MainTheorem}
\|{\bf P}_Ne^{it\Delta}f\|_{L^p(\mathbb{T}\times G)}\lesssim N^{\frac{d}{2}-\frac{d+2}{p}}\|f\|_{L^2(G)}
\end{align}
holds for all $p\geq 2+\frac{8}{r}$. \\
(IV) Assume it holds that 
\begin{align}\label{epsilonassumption}
\|{\bf P}_Ne^{it\Delta}f\|_{L^p(\mathbb{T}\times G)}\lesssim_{\varepsilon} N^{\frac{d}{2}-\frac{d+2}{p}+\varepsilon}\|f\|_{L^2(G)}
\end{align}
for some $p>p_0$,  then \eqref{MainTheorem} holds for all $q>p$. 
\end{thm}

The proof strategy of this theorem is a Stein-Tomas type argument, similar to the proofs of Propositions 3.82, 3.110, 3.113 in \cite{Bou93}. The new ingredient is the non-abelian Fourier transform. We detail the proof in the following. 

Let $\omega\in C_c^\infty(\mathbb{R})$ such that $\omega\geq 0$, $\omega(x)=1$ for all $|x|\leq 1$ and $\omega(x)=0$ for all $|x|\geq 2$. Let $N$ be a dyadic natural number. Define 
\begin{align*}
\omega_{\frac{1}{N^2}}&:=\omega(N^2\cdot),\\
\omega_{\frac{1}{NM}}&:=\omega(NM\cdot)-\omega(2NM\cdot),
\end{align*}
where
\begin{align*}
1\leq M<N, \ \ M\text{ dyadic}.
\end{align*}
Let 
\begin{align*}
N_1=\frac{N}{2^{10}}, \ \ 1\leq Q<N_1, \ \ Q\text{ dyadic}.
\end{align*}
Then 
\begin{align}\label{partitionofunity1}
\sum_{Q\leq M\leq N}\omega_{\frac{1}{NM}}=1,\ \ \text{on }\left[-\frac{1}{NQ},\frac{1}{NQ}\right],
\end{align}
\begin{align}\label{partitionofunity2}
\sum_{Q\leq M\leq N}\omega_{\frac{1}{NM}}=0,\ \ \text{outside }\left[-\frac{2}{NQ},\frac{2}{NQ}\right].
\end{align}
Write 
\begin{align}\label{partition on circle}
1=\sum_{1\leq Q\leq N_1}\sum_{Q\leq M\leq N}
\left[\left(\sum_{\substack{(a,q)=1,\\ Q\leq q<2Q}}\delta_{a/q}\right)*\omega_{\frac{1}{NM}}\right](\frac{t}{T})+\rho(t).
\end{align}
Note the major arc disjointness property
\begin{align*}
\left(\frac{a_1}{q_1}+\left[-\frac{2}{NQ_1},\frac{2}{NQ_1}\right]\right)\cap\left(\frac{a_2}{q_2}+\left[-\frac{2}{NQ_2},\frac{2}{NQ_2}\right]\right)=\emptyset
\end{align*}
for $(a_i,q_i)=1$, $Q_i\leq q_i<2Q_i$, $i=1,2$, $Q_1\leq Q_2\leq N_1$.  This in particular implies 
\begin{align}\label{0<rho<1}
0\leq \rho(t)\leq 1, \ \ \text{for all } t\in\mathbb{R}/T\mathbb{Z},
\end{align}
\begin{align}\label{deltaaqhat}
\left[\left(\sum_{\substack{(a,q)=1,\\ Q\leq q<2Q}}\delta_{a/q}\right)*\omega_{\frac{1}{NM}}(\frac{\cdot}{T})\right]
^{\wedge}(0)=\frac{1}{T}\int_0^T\left(\sum_{\substack{(a,q)=1,\\ Q\leq q<2Q}}\delta_{a/q}\right)*\omega_{\frac{1}{NM}}(\frac{t}{T})\ dt\leq \frac{2Q^2}{NM},
\end{align}
which implies 
\begin{align}\label{hatrho0}
1\geq |\hat{\rho}(0)|\geq
1-\sum_{1\leq Q\leq N_1}\sum_{Q\leq M\leq N}\left|\left[\left(\sum_{\substack{(a,q)=1,\\ Q\leq q<2Q}}\delta_{a/q}\right)*\omega_{\frac{1}{NM}}(\frac{\cdot}{T})\right]
^{\wedge}(0)\right|\geq 1-\frac{8N_1}{N}\geq\frac{1}{2}. 
\end{align}
By Dirichlet's lemma on rational approximations, for any $\frac{t}{T}\in\mathbb{S}^1$, there exists $a,q$ with $a\in\mathbb{Z}_{\geq 0}$, $q\in\mathbb{N}$, $(a,q)=1$, $q\leq N$, such that  
$|\frac{t}{T}-\frac{a}{q}|<\frac{1}{qN}$. If $\rho(\frac{t}{T})\neq 0$, then \eqref{partitionofunity1} implies 
$q>N_1=\frac{N}{2^{10}}$. This implies by \eqref{KeyEst} and \eqref{0<rho<1} that 
\begin{align}\label{minorarcLinfity}
\|\rho(t)K_N(t,x)\|_{L^\infty(\mathbb{T}\times G)}\lesssim N^{d-\frac{r}{2}}.
\end{align}

Now define coefficients $\alpha_{Q,M}$ such that
\begin{align}\label{introduce alphaQM}
\left[\left(\sum_{\substack{(a,q)=1,\\ Q\leq q<2Q}}\delta_{a/q}\right)*\omega_{\frac{1}{NM}}(\frac{\cdot}{T})\right]
^{\wedge}(0)=\alpha_{Q,M}\hat{\rho}(0),
\end{align}
then \eqref{deltaaqhat} and \eqref{hatrho0} imply 
\begin{align}\label{alphaQMbound}
\alpha_{Q,M}\lesssim\frac{Q^2}{NM}.
\end{align}
Write
\begin{align*}
{\bf K}_N(t,x)=&
\sum_{Q\leq N_1}\sum_{Q\leq M\leq N}{\bf K}_N(t,x)\left[\left(\left(\sum_{(a,q)=1,Q\leq q<2Q}\delta_{a/q}\right)*\omega_{\frac{1}{NM}}(\frac{\cdot}{T})\right)-\alpha_{Q,M}\rho\right](t)\\
&+\left(1+\sum_{Q,M}\alpha_{Q,M}\right){\bf K}_N(t,x)\rho(t),\numberthis
\end{align*}
and define
\begin{align}\label{introduce LambdaQM}
\Lambda_{Q,M}(t,x):={\bf K}_N(t,x)\left[\left(\left(\sum_{(a,q)=1,Q\leq q<2Q}\delta_{a/q}\right)*\omega_{\frac{1}{NM}}(\frac{\cdot}{T})\right)-\alpha_{Q,M}\rho\right](t).
\end{align}
Then from \eqref{KeyEst}, \eqref{minorarcLinfity}, \eqref{alphaQMbound}, we have
\begin{align}\label{majorarc}
\|\Lambda_{Q,M}\|_{L^\infty(\mathbb{T}\times G)}\lesssim N^{d-\frac{r}{2}}(\frac{M}{Q})^{r/2}.
\end{align}

Next, we estimate $\hat{\Lambda}_{Q,M}$. From \eqref{F transform of kernel}, for 
\begin{align*}
n\in \frac{2\pi}{T}\mathbb{Z}\cong \hat{\mathbb{T}},\ \ \lambda\in\hat{G}, 
\end{align*} 
we have 
\begin{align}\label{Lambda and lambda}
\hat{\Lambda}_{Q,M}(n,\lambda)=\lambda_{Q,M}(n,\lambda)\cdot
\text{Id}_{d_\lambda\times d_\lambda}
\end{align}
where
\begin{align}\label{lambdaQM}
\lambda_{Q,M}(n,\lambda)=\varphi(\lambda, N)
\left[\left(\sum_{(a,q)=1,Q\leq q<2Q}\delta_{a/q}\right)^{\wedge}\cdot
\hat{\omega}_{\frac{1}{NM}}(T\cdot )-\alpha_{Q,M}\hat{\rho}\right](n+k_\lambda).
\end{align}
Note that \eqref{introduce alphaQM} immediately implies 
\begin{align}\label{bound on fourier transform of Lambda2}
\lambda_{Q,M}(n,\lambda)=0, \ \ \text{for }n+k_\lambda=0.
\end{align}
Let $d(m,Q)$ denote the number of divisors of $m$ less than $Q$, using Lemma 3.33 in \cite{Bou93}, 
\begin{align}\label{bound on fourier transform of deltas}
|(\sum_{(a,q)=1,Q\leq q< 2Q}\delta_{a/q})^{\wedge}(Tn)|\lesssim_{\varepsilon} d(\frac{Tn}{2\pi},Q)Q^{1+\varepsilon},\ \  n\neq0,\ \varepsilon>0,
\end{align}
we get
\begin{align}\label{lambdaQMbound}
|\lambda_{Q,M}(n,\lambda)|\lesssim_{\varepsilon}\varphi(\lambda,N)\frac{Q^{1+\varepsilon}}{NM}d(\frac{T(n+k_\lambda)}{2\pi},Q)
+\frac{Q^2}{NM}|\hat{\rho}(n+k_\lambda)|.
\end{align}
Using
\begin{align*}
d(m,Q)\lesssim_{\varepsilon} m^\varepsilon,
\end{align*} 
\eqref{bound on fourier transform of deltas} and \eqref{partition on circle}, 
we have 
\begin{align}\label{hatrhon}
|\hat{\rho}(n)|\leq\sum_{1\leq Q\leq N_1}\sum_{Q\leq M\leq N}\frac{d(\frac{Tn}{2\pi},Q)Q^{1+\varepsilon}}{NM}\lesssim 
\frac{N^\varepsilon}{N}, \ \ \text{for }n\neq 0, |n|\lesssim N^2,
\end{align}
thus 
\begin{align*}
|\lambda_{Q,M}(n,\lambda)|&\lesssim_\varepsilon \varphi(\lambda, N)\frac{Q}{NM}\left[Q^\varepsilon d(\frac{T(n+k_\lambda)}{2\pi},Q)+\frac{Q}{N^{1-\varepsilon}}\right]\\
&\lesssim_{\varepsilon} \varphi(\lambda,N)\frac{QN^\varepsilon}{NM}, \ \ \text{for } |n|\lesssim N^2.\numberthis \label{bound on fourier transform of Lambda}
\end{align*}

\begin{prop}\label{EstimatesOnArcs}
(i) Assume that $f\in L^1(\mathbb{T}\times G)$. Then 
\begin{align}\label{majorarcLinfty}
\|f*\Lambda_{Q,M}\|_{L^\infty(\mathbb{T}\times G)}
\lesssim N^{d-\frac{r}{2}}(\frac{M}{Q})^{r/2}\|f\|_{L^1(\mathbb{T}\times G)}.
\end{align}
(ii) Assume that $f\in L^2(\mathbb{T}\times G)$. Assume also
\begin{equation}\label{hatfnlambda}
\hat{f}(n,\lambda)=0,\ \  \text{for } |n|\gtrsim N^2.
\end{equation} 
Then 
\begin{align}\label{majorarcL2}
\|f*\Lambda_{Q,M}\|_{L^2(\mathbb{T}\times G)}
\lesssim_\varepsilon \frac{QN^\varepsilon}{NM}\|f\|_{L^2(\mathbb{T}\times G)},
\end{align}
\begin{align}\label{majorarcL2epsilon}
\|f*\Lambda_{Q,M}\|_{L^2(\mathbb{T}\times G)}\lesssim_{\tau,B}\frac{Q^{1+2\tau}L}{NM}\|f\|_{L^2(\mathbb{T}\times G)}
+M^{-1}L^{-B/2}N^{d/2}\|f\|_{L^1(\mathbb{T}\times G)}.
\end{align}
for all 
\begin{align}\label{condition for Q}
L>1, \ \ 0<\tau<1, \ \ B>\frac{6}{\tau},\ \  N>(LQ)^B.
\end{align}
\end{prop}
\begin{proof}
Using \eqref{majorarc}, we have
\begin{align*}
\|f*\Lambda_{Q,M}\|_{L^\infty(\mathbb{T}\times G)}
\leq\|f\|_{L^1(\mathbb{T}\times G)}\|\Lambda_{Q,M}\|_{L^\infty(\mathbb{T}\times G)}\lesssim
 N^{d-\frac{r}{2}}(\frac{M}{Q})^{r/2}\|f\|_{L^1(\mathbb{T}\times G)}.
\end{align*}
This proves (i). \eqref{majorarcL2} is a consequence of  \eqref{convolutionL2}, \eqref{Lambda and lambda},   and \eqref{bound on fourier transform of Lambda}. To prove \eqref{majorarcL2epsilon}, we use  \eqref{Plancherel}, \eqref{f*ghat} and \eqref{Lambda and lambda} to get
\begin{align*}
\|f*\Lambda_{Q,M}\|_{L^2(\mathbb{T}\times G)}
&=\left(\sum_{n,\lambda}d_\lambda \|\hat{f}(n,\lambda)\|_{HS}^2\cdot|\lambda_{Q,M}(n,\lambda)|^2\right)^{1/2},
\end{align*}
which combined with \eqref{bound on fourier transform of Lambda2}, \eqref{lambdaQMbound}, and \eqref{hatrhon} yields 
\begin{align*}
\|f*\Lambda_{Q,M}\|_{L^2(\mathbb{T}\times G)}\lesssim_{\varepsilon}
&\frac{Q^{1+\varepsilon}}{NM}\left(\sum_{n,\lambda}
\varphi(\lambda, N)^2d_\lambda\|\hat{f}(n,\lambda)\|^2_{HS}d(\frac{T(n+k_\lambda)}{2\pi},Q)^2\right)^{1/2}\\
&+
\frac{Q^2}{MN^{2-\varepsilon}}\|f\|_{L^2(\mathbb{T}\times G)}.\numberthis
\end{align*}
Using Lemma 3.47 in \cite{Bou93} and Lemma \ref{CountingLemma}, we have 
\begin{align*}
&\left|\{(n,\lambda)\mid |n|, k_\lambda\lesssim N^2, d(\frac{T(n+k_\lambda)}{2\pi},Q)>D\}\right|\\
&\lesssim_{\tau, B} (D^{-B}Q^{\tau}N^2+Q^B)\cdot 
\max_{|m|\lesssim N^2}|\{(n,\lambda)\mid n+k_\lambda=m\}|\\
&\lesssim_{\tau, B} (D^{-B}Q^{\tau}N^2+Q^B)\cdot 
|\{\lambda\in\hat{G}\mid k_\lambda\lesssim N^2\}|\\
&\lesssim_{\tau, B} (D^{-B}Q^{\tau}N^2+Q^B)\cdot 
N^r.
\numberthis \label{d(n+klambda)>D}
\end{align*}

Now \eqref{Hausdorff-Young} gives
\begin{align*}
\|\hat{f}(n,\lambda)\|^2_{HS}\lesssim d_\lambda\|f\|^2_{L^1(\mathbb{T}\times G)},
\end{align*}
and Lemma \ref{CountingLemma} gives 
\begin{align*}
|\varphi(\lambda, N)d^2_\lambda|\lesssim N^{d-r},
\end{align*}
which together with \eqref{d(n+klambda)>D} 
imply
\begin{align*}
\|f*\Lambda_{Q,M}\|_{L^2(\mathbb{T}\times G)}\lesssim_{\tau, B}&(\frac{Q^{1+\varepsilon}D}{NM}+\frac
{Q^2}{MN^{2-\varepsilon}})\|f\|_{L^2(\mathbb{T}\times G)}\\
&+\frac{Q^{1+\varepsilon}}{NM}\cdot Q\cdot(D^{-B/2}Q^{\tau}N+Q^{B/2})N^{d/2}\|f\|_{L^1(\mathbb{T}\times G)}.
\numberthis
\end{align*}
This implies \eqref{majorarcL2epsilon} assuming the conditions in \eqref{condition for Q}. 
\end{proof}

Now interpolating \eqref{majorarcLinfty} and \eqref{majorarcL2}, we get
\begin{align}\label{majorarcLp}
\|f*\Lambda_{Q,M}\|_{L^p(\mathbb{T}\times G)}\lesssim_\varepsilon N^{d-\frac{r}{2}-\frac{2d-r+2}{p}+\varepsilon}M^{\frac{r}{2}-\frac{r+2}{p}}
Q^{-\frac{r}{2}+\frac{r+2}{p}}\|f\|_{L^{p'}(\mathbb{T}\times G)}.
\end{align}
Interpolating \eqref{majorarcLinfty} and \eqref{majorarcL2epsilon} for 
\begin{align}\label{condition for p tau}
p>\frac{2(r+2)}{r}+10\tau,\ \ \text{which implies } \sigma=\frac{r}{2}-\frac{r+2+4\tau}{p}>0,
\end{align}
we get 
\begin{align*}
\|f*\Lambda_{Q,M}\|_{L^p(\mathbb{T}\times G)}\lesssim_{\tau, B} & N^{d-\frac{r}{2}-\frac{2d-r+2}{p}}M^{\frac{r}{2}-\frac{r+2}{p}}
Q^{-\sigma}L^{\frac{2}{p}}\|f\|_{L^{p'}(\mathbb{T}\times G)}\\
&+Q^{-\frac{2}{r}(1-\frac{2}{p})}M^{\frac{r}{2}-\frac{r+2}{p}}L^{-\frac{B}{p}}N^{d-\frac{r}{2}-\frac{d-r}{p}}\|f\|_{L^1(\mathbb{T}\times G)}.\numberthis \label{majorarcLpepsilon}
\end{align*}

Now we are ready to prove Theorem \ref{simpleconnected}. 
\begin{proof}[Proof of Theorem \ref{simpleconnected}]
Without loss of generality, we assume that $\|f\|_{L^2(G)}=1$. Then for $F={\bf P}_Ne^{it\Delta}f$,  \eqref{Bernstein} implies 
\begin{align}\label{FL2}
\|F\|_{L^2_x}\lesssim 1,
\end{align}
\begin{align}\label{FLinfty}
\|F\|_{L^\infty_x}\lesssim N^{\frac{d}{2}}.
\end{align}  
Let 
\begin{align}\label{DefinitionH}
H=\chi_{|F|>\lambda}\cdot\frac{F}{|F|}.
\end{align} 
Let $\tilde{\tilde{{\bf P}}}_N$ be a Littlewood-Paley projection of the product type such that $\tilde{\tilde{{\bf P}}}_N\circ{\bf P}_N={\bf P}_N$. Let $\tilde{\tilde{{\bf K}}}_N$ be the Schr\"odinger kernel associated to $\tilde{\tilde{{\bf P}}}_Ne^{it\Delta}$. Then by \eqref{f*ghat}, \eqref{STFTKN}, and \eqref{STFTPNf}, we have 
\begin{align*}
F*\tilde{\tilde{{\bf K}}}_N=F.
\end{align*}
Let $Q_{N^2}$ be the Littlewood-Paley projection operator on $L^2(\mathbb{T}\times G)$ defined by 
\begin{align*}
(Q_{N^2}H)^{\wedge}:=\varphi(\frac{-k_\lambda-n^2}{N^4})\widehat{H}(n,\lambda)
\end{align*}
for some bump function $\varphi$ such that $Q_{N^2}\circ{\bf P}_N={\bf P}_N$. Then by \eqref{PlancherelInnerproduct} and \eqref{STFTPNf}, we have 
\begin{align*}
\langle F, H\rangle_{L^2_{t,x}}=\langle Q_{N^2}F, H\rangle_{L^2_{t,x}}=\langle F, Q_{N^2}H\rangle_{L^2_{t,x}}.
\end{align*}
Then we can write 
\begin{align*}
\lambda m_\lambda\leq\langle F,H\rangle_{L^2_{t,x}}=
\langle F*\tilde{\tilde{{\bf K}}}_N, Q_{N^2}H\rangle_{L^2_{t,x}}.
\end{align*}
Using \eqref{Plancherel} and \eqref{f*ghat} again, we get 
\begin{align*}
\lambda m_\lambda&\leq \langle F, Q_{N^2}H*\tilde{\tilde{{\bf K}}}_N\rangle_{L^2_{t,x}}
\leq\|F\|_{L^2_{t,x}}\|Q_{N^2}H*\tilde{\tilde{{\bf K}}}_N\|_{L^2_{t,x}}\\
&\lesssim \|Q_{N^2}H*\tilde{\tilde{{\bf K}}}_N\|_{L^2_{t,x}}=\langle Q_{N^2}H*\tilde{\tilde{{\bf K}}}_N, Q_{N^2}H*\tilde{\tilde{{\bf K}}}_N\rangle_{L^2_{t,x}}
=\langle Q_{N^2}H, Q_{N^2}H*(\tilde{\tilde{{\bf K}}}_N*\tilde{\tilde{{\bf K}}}_N)\rangle_{L^2_{t,x}}. \numberthis \label{lambdamlambda}
\end{align*}
Let 
\begin{align*}
H'=Q_{N^2}H, \ \ \tilde{{\bf K}}_N=\tilde{\tilde{{\bf K}}}_N*\tilde{\tilde{{\bf K}}}_N.
\end{align*}
Note that $H'$ by definition satisfies the assumption in  \eqref{hatfnlambda} and we can apply Proposition \ref{EstimatesOnArcs}. Also note that $\tilde{{\bf K}}_N$ is still a Schr\"odinger kernel associated to a Littlewood-Paley projection operator of the product type. Finally note that the Bernstein type inequalities  \eqref{Bernstein} and the definition \eqref{DefinitionH} of $H$ give
\begin{align}\label{H'bounds}
\|H'\|_{L^p_{t,x}}\lesssim \|H\|_{L^p_{t,x}}\lesssim m_\lambda^{\frac{1}{p}}. 
\end{align}
Write
\begin{align*}
\Lambda=\sum_{1\leq Q\leq N_1}\sum_{Q\leq M\leq N}\Lambda_{Q,M}, \ \ \tilde{{\bf K}}_N=\Lambda+(\tilde{{\bf K}}_{N}-\Lambda), 
\end{align*}
where $\Lambda_{Q,M}$ is defined as in \eqref{introduce LambdaQM} except that ${\bf K}_N$ is replaced by $\tilde{{\bf K}}_N$. We have by \eqref{lambdamlambda}
\begin{align*}
\lambda^2 m^2_\lambda
&\lesssim \langle H', H'*\Lambda\rangle_{L^2_{t,x}}+\langle H',H'*(\tilde{{\bf K}}_N-\Lambda)\rangle_{L^2_{t,x}}\\
&\lesssim  \|H'\|_{L^{p'}_{t,x}}\|H'*\Lambda\|_{L^p_{t,x}}+\|H'\|_{L^1_{t,x}}^2\|\tilde{{\bf K}}_N-\Lambda\|_{L^\infty_{t,x}}.\numberthis\label{distribution estimate}
\end{align*}
Using \eqref{majorarcLp} for $p=p_0:=\frac{2(r+2)}{r}$, then summing over $Q,M$, and noting \eqref{H'bounds}, we have 
\begin{align*} 
\|H'\|_{L^{p'}_{t,x}}\|H'*\Lambda\|_{L^p_{t,x}}\lesssim 
N^{d-\frac{2d+4}{p_0}+\varepsilon}\|H'\|^2_{L^{p_0'}_{t,x}}\lesssim N^{d-\frac{2d+4}{p_0}+\varepsilon}m_\lambda^{\frac{2}{p_0'}}.
\end{align*}
From \eqref{minorarcLinfity} and \eqref{alphaQMbound} we get
\begin{align}\label{minorarcLinfityfull}
\|\tilde{{\bf K}}_N-\Lambda\|_{L^\infty_{t,x}}\lesssim N^{d-\frac{r}{2}},
\end{align}
which implies 
\begin{align}\label{H'KN-Lambda}
\|H'\|_{L^1_{t,x}}^2\|\tilde{{\bf K}}_N-\Lambda\|_{L^\infty_{t,x}}\lesssim
N^{d-\frac{r}{2}}\|H'\|_{L^1_{t,x}}^2\lesssim N^{d-\frac{r}{2}}m_\lambda^2.
\end{align}
Then we have
\begin{align*}
\lambda^2m_\lambda^2\lesssim N^{d-\frac{2d+4}{p_0}+\varepsilon}m_\lambda^{\frac{2}{p_0'}}
+N^{d-\frac{r}{2}}m_\lambda^2,
\end{align*}
which implies for $\lambda\gtrsim N^{\frac{d}{2}-\frac{r}{4}}$
\begin{align*}
m_\lambda\lesssim_\varepsilon N^{p_0(\frac{d}{2}-\frac{d+2}{p_0})+\varepsilon}\lambda^{-p_0}.
\end{align*}
Thus part (I) is proved. To prove part (II) for some fixed $p$, using part (I) and \eqref{FLinfty}, it suffices to prove it for $\lambda\gtrsim N^{\frac{d}{2}-\varepsilon}$. Summing \eqref{majorarcLpepsilon} over $Q,M$ in the range indicated by \eqref{condition for Q}, we get
\begin{align}\label{H'Lambda1}
\|H'*\Lambda_1\|_{L^p_{t,x}}\lesssim L N^{d-\frac{2d+4}{p}}\|H'\|_{L^{p'}_{t,x}}+L^{-B/p}N^{d-\frac{d+2}{p}}\|H'\|_{L^1_{t,x}},
\end{align}
where
\begin{align*}
\Lambda_1:=\sum_{Q<Q_1,Q\leq M\leq N}\Lambda_{Q,M}
\end{align*}
and $Q_1$ is the largest $Q$-value satisfying \eqref{condition for Q}.
For values $Q\geq Q_1$, use \eqref{majorarcLp} to get
\begin{align}\label{H'Lambda-Lambda1}
\|H'*(\Lambda-\Lambda_1)\|_{L^p_{t,x}}\lesssim_{\varepsilon}  N^{d-\frac{2d+4}{p}+\varepsilon}Q_1^{-(\frac{r}{2}-\frac{r+2}{p})}\|H'\|_{L^{p'}_{t,x}}.
\end{align}
Using \eqref{distribution estimate}, \eqref{H'KN-Lambda}, \eqref{H'Lambda1} and \eqref{H'Lambda-Lambda1}, we get
\begin{align*}
\lambda^2m_\lambda^2\lesssim N^{d-\frac{2(d+2)}{p}}(L+\frac{N^\varepsilon}
{Q_1^{\frac{r}{2}-\frac{r+2}{p}}})m_\lambda^{2/p'}+
L^{-B/p}N^{d-\frac{d+2}{p}}m_\lambda^{1+\frac{1}{p'}}+N^{d-\frac{r}{2}}m_\lambda^2.
\end{align*}
For $\lambda\gtrsim N^{\frac{d}{2}-\frac{r}{4}}$, the last term of the above inequality can be dropped. Let $Q_1=N^\delta$ such that $\delta>0$ and  
\begin{align}\label{Condition for delta}
(LN^{\delta})^B<N
\end{align}
such that \eqref{condition for Q} holds. 
Note that  
\begin{align*}
L>1>\frac{N^\varepsilon}{Q_1^{\frac{r}{2}-\frac{r+2}{p}}}
\end{align*}
for $p>p_0+10\tau$ and $\varepsilon$ sufficiently small, thus
\begin{align*}
\lambda^2m_\lambda^2\lesssim N^{d-\frac{2(d+2)}{p}}Lm_\lambda^{2/p'}+
L^{-B/p}N^{d-\frac{d+2}{p}}m_\lambda^{1+\frac{1}{p'}}.
\end{align*}
This implies
\begin{align*}
m_\lambda&\lesssim N^{p(\frac{d}{2}-\frac{d+2}{p})}L^{\frac{p}{2}}\lambda^{-p}+N^{p(d-\frac{d+2}{p})}L^{-B}\lambda^{-2p}\\
&\lesssim N^{-d-2}(\frac{N^{d/2}}{\lambda})^{p}L^{\frac{p}{2}}+N^{-d-2}(\frac{N^{d/2}}{\lambda})^{2p}L^{-B}.
\end{align*}
Let 
\begin{align*}
L=(\frac{N^{d/2}}{\lambda})^{\tau}, \ \ B>\frac{p}{\tau}
\end{align*}
and $\delta$ be sufficiently small so that \eqref{Condition for delta} holds, then  
\begin{align*}
m_{\lambda}\lesssim N^{-d-2}(\frac{N^{d/2}}{\lambda})^{p+\frac{p\tau}{2}}.
\end{align*} 
Note that conditions for $p,\tau$ indicated in \eqref{condition for p tau} implies that $p+\frac{p\tau}{2}$ can take any exponent $>p_0=\frac{2(r+2)}{r}$. This completes the proof of part (II). 

The proofs of parts (III) and (IV) are then identical to the proofs of Propositions 3.110 and 3.113 respectively in \cite{Bou93}. 
\end{proof}

\begin{proof}[Proof of Theorem \ref{Main}]
Part (i) is a direct consequence of Theorem \ref{simpleconnected}(III). Part (ii) is a direct consequence of Theorem \ref{simpleconnected}(IV) and the result from \cite{BD15} that full Strichartz estimates hold on any torus with an $\varepsilon$-loss. 
\end{proof}

\section{Dispersive Estimates on Major Arcs}
\label{Dispersive Estimates on Major Arcs}

In this section, we prove Theorem \ref{MainEstimate}. 

\subsection{Weyl Type Sums on Rational Lattices}
\label{Weyl Type Sums and Weyl Differencing for Rational Lattices}

\begin{defn}\label{definitionofarationallattice}
Let $L=\mathbb{Z}w_1+\cdots+\mathbb{Z}w_r$ be a lattice on an inner product space $(V, \langle\ , \ \rangle)$. We say $L$ is a \textit{rational lattice} provided that there exists some $D\in\mathbb{R}$ such that $\langle w_i,w_j\rangle\in D^{-1}\mathbb{Z}$. We call the number $D$ a \textit{period} of $L$. 
\end{defn}

By Lemma \ref{DGamma}, any weight lattice $\Lambda$ is a rational lattice with respect to the Cartan-Killing form. As a sublattice of $\Lambda$, the root lattice $\Gamma$ is also rational. 

Let $f$ be a function on $\mathbb{Z}^r$ and define the \textit{difference operator} $D_i$ by 
\begin{equation}\label{DefinitionDi}
D_{i}f(n_1,\cdots,n_r):=f(n_1,\cdots,n_{i-1}, n_i+1, n_{i+1},\cdots, n_r)-f(n_1,\cdots, n_r)
\end{equation} 
for $i=1,\cdots, r$. The Leibniz rule for $D_i$ reads
\begin{align}\label{Leibniz}
D_i(\prod_{j=1}^nf_j)=
\sum_{l=1}^n\sum_{1\leq k_1<\cdots<k_l\leq n}
D_if_{k_1}\cdots D_if_{k_l}\cdot \prod_{\substack{j \neq k_1,\cdots, k_l\\ 1\leq j\leq n}}f_j.
\end{align}
Note that there are $2^n-1$ terms in the right side of the above formula. 

\begin{defn}
Let $L\cong\mathbb{Z}^r$ be a lattice of rank $r$. Given $A\in\mathbb{R}$, we say a function $f$ on $L$ is a pseudo-polynomial of degree $A$ provided for each $n\in\mathbb{Z}_{\geq0}$, 
\begin{align}\label{DecCon}
|D_{i_1}\cdots D_{i_n}f(n_1,\cdots,n_r)|\lesssim N^{A-n}
\end{align}
holds uniformly in $|n_i|\lesssim N$, $i=1,\cdots, r$, for all $i_j=1,\cdots, r$, $j=1,\cdots, n$, and $N\geq 1$. 
\end{defn}

A direct application of the Leibniz rule \eqref{Leibniz} gives the following lemma. 
\begin{lem}\label{pseudoleibniz}
Let $L$ be a lattice and $f,g$ two functions on $L$. Assume $f,g$ are  pseudo-polynomials of degrees $A,B$ respectively. Then $f\cdot g$ is a pseudo-polynomial of degree $A+B$. 
\end{lem}

Now we have the following estimate on Weyl type sums, which generalizes the classical Weyl inequality in one dimension, as in Lemma 3.18 of \cite{Bou93}. 

\begin{lem}\label{WeylSum}
Let $L=\mathbb{Z}w_1+\cdots+\mathbb{Z}w_r$ be a rational lattice in the inner product space $(V,\langle\ ,\  \rangle)$ with a period $D>0$. Let $\varphi$ be a bump function on $\mathbb{R}$ and $N\geq 1$, $A\in\mathbb{R}$. Suppose $f:L\to\mathbb{C}$ a pseudo-polynomial of degree $A$.  
Let
\begin{align}\label{absorb}
F(t,H)=\sum_{\lambda\in L}e^{-it|\lambda|^2+i\langle\lambda, H\rangle}\varphi(\frac{|\lambda|^2}{N^2})\cdot f
\end{align}
for $t\in\mathbb{R}$ and $H\in V$. Then for $\frac{t}{2\pi D}\in\mathcal{M}_{a,q}$, we have
\begin{align}\label{appendix}
|F(t,H)|\lesssim \frac{N^{A+r}}{(\sqrt{q}(1+N\|\frac{t}{2\pi D}-\frac{a}{q}\|^{1/2}))^{r}}
\end{align}
uniformly in $H\in V$.  
\end{lem}

Note that Part (i) of Theorem \ref{MainEstimate} is a direct consequence of this lemma. 

\begin{proof}
By the Weyl differencing trick, write 
\begin{align*}
|F|^2&=\sum_{\lambda_1,\lambda_2\in L}e^{-it(|\lambda_1|^2-|\lambda_2|^2)+i\langle\lambda_1-\lambda_2,H\rangle}
\varphi(\frac{|\lambda_1|^2}{N^2})\varphi(\frac{|\lambda_2|^2}{N^2})f(\lambda_1)
\overline{f(\lambda_2)}\\
&=\sum_{\mu=\lambda_1-\lambda_2}
e^{-it|\mu|^2+i\langle\mu,H\rangle}
\sum_{\lambda=\lambda_2}e^{-i2t\langle\mu,\lambda\rangle}
\varphi(\frac{|\mu+\lambda|^2}{N^2})\varphi(\frac{|\lambda|^2}{N^2})f(\mu+\lambda)
\overline{f(\lambda)}\\
&\leq\sum_{|\mu|\lesssim N}|\sum_{\lambda}e^{-i2t\langle\mu,\lambda\rangle}
\varphi(\frac{|\mu+\lambda|^2}{N^2})\varphi(\frac{|\lambda|^2}{N^2})f(\mu+\lambda)
\overline{f(\lambda)}|.
\end{align*}
Now let $L=\mathbb{Z}w_1+\cdots+\mathbb{Z}w_r$. Write 
\begin{equation*}
\lambda=\sum_{i=1}^r n_iw_i,
\end{equation*} 
and 
\begin{equation*}
g(\lambda)=\varphi(\frac{|\mu+\lambda|^2}{N^2})\varphi(\frac{|\lambda|^2}{N^2})
f(\mu+\lambda)\overline{f(\lambda)}. 
\end{equation*}
Note that as functions in $\lambda\in L$, both $\varphi(\frac{|\mu+\lambda|^2}{N^2})|$ and $\varphi(\frac{|\lambda|^2}{N^2})$ are pseudo-polynomials of degree 0, and both $f(\mu+\lambda)$ and $f(\lambda)$ are pseudo-polynomials of degree $A$, which implies by Lemma \ref{pseudoleibniz} that $g(\lambda)$ is a pseudo-polynomial of degree $2A$. That is, $g(\lambda)$ satisfies
\begin{align}\label{Di1Ding}
|D_{i_1}\cdots D_{i_n}g(\lambda)|\lesssim N^{2A-n}.
\end{align}
uniformly for $|\lambda|\lesssim N$ and $N\geq 1$, for all $i_1,\cdots, i_n\in \{1,\cdots, r\}$. 
Write
\begin{align}\label{WeylInduction}
\sum_{\lambda\in L}e^{-i2t\langle\mu,\lambda\rangle}g(\lambda)
&=
\sum_{n_1,\cdots,n_r\in\mathbb{Z}}(\prod_{i=1}^re^{-itn_i\langle\mu,2w_i\rangle})g(\lambda).
\end{align}
By summation by parts twice, we have
\begin{align}\label{partstwice}
\sum_{n_1\in\mathbb{Z}}e^{-itn_1\langle\mu,2w_1\rangle}g
=(\frac{e^{-it\langle\mu,2w_1\rangle}}{1-e^{-it\langle\mu,2w_1
\rangle}})^2\sum_{n_1\in\mathbb{Z}}e^{-itn_1\langle\mu,2w_1\rangle}D^2_{1}g(n_1,\cdots,n_r),
\end{align}
then \eqref{WeylInduction} becomes 
\begin{align*}
\sum_{\lambda\in L}e^{-i2t\langle\mu,\lambda\rangle}g
= (\frac{e^{-it\langle\mu,2w_1\rangle}}{1-e^{-it\langle\mu,2w_1
\rangle}})^2\sum_{n_1,\cdots,n_r\in\mathbb{Z}}(\prod_{i=1}^re^{-itn_i\langle\mu,2w_i\rangle})D_1^2g(n_1,\cdots,n_r).
\end{align*}
Then we can carry out the procedure of summation by parts twice with respect to other variables $n_2,\cdots,n_r$. But we require that only when 
$$|1-e^{-it\langle\mu,2w_i\rangle}|\geq\frac{1}{N}$$ do we carry out the procedure to the variable $n_i$. Using \eqref{Di1Ding}, 
 we obtain
\begin{align*}
&|\sum_{\lambda}e^{-i2t\langle\mu,\lambda\rangle}
\varphi(\frac{|\mu+\lambda|^2}{N^2})\varphi(\frac{|\lambda|^2}{N^2})f(\mu+\lambda)
\overline{f(\lambda)}|\\
&\lesssim N^{2A-r}\prod_{i=1}^r
\frac{1}{(\max\{1-e^{-it\langle\mu,2w_i\rangle},\frac{1}{N}\})^2}\\
&\lesssim N^{2A-r}\prod_{i=1}^r
\frac{1}{(\max\{\|\frac{1}{2\pi}t\langle\mu,2w_i\rangle\|,\frac{1}{N}\})^2}.
\end{align*}
Writing $\mu=\sum_{j=1}^rm_jw_j$, $m_j\in\mathbb{Z}$, we have
\begin{align*}
|F|^2&\lesssim N^{2A-r}\sum_{\substack{|m_j|\lesssim N, \\ j=1,\cdots, r}}
\prod_{i=1}^r\frac{1}{(\max\{\|\frac{1}{2\pi}t\sum_{j=1}^r
m_j\langle w_j,2w_i\rangle\|,\frac{1}{N}\})^2}.
\end{align*}
Let 
\begin{align}\label{nimi}
n_i=\sum_{j=1}^rm_j\langle w_j,2w_i\rangle\cdot D,\ i=1,\cdots, r,
\end{align} 
where $D>0$ is the period of $L$ so that $\langle w_j,w_i\rangle\in D^{-1}{\mathbb{Z}}$. Then $n_i\in \mathbb{Z}$. Note that the matrix $\left(\langle w_j,2w_i\rangle D\right)_{i,j}$ is non-degenerate, which implies that for each vector $(n_1,\cdots, n_r)\in \mathbb{Z}^r$, there exists at most one vector $(m_1,\cdots, m_r)\in \mathbb{Z}^r$ so that \eqref{nimi} holds, thus 
\begin{align*}
|F|^2&\lesssim 
N^{2A-r}\sum_{\substack{|n_i|\lesssim N, \\ i=1,\cdots, r}}\prod_{i=1}^r\frac{1}{(\max\{\|\frac{t}{2\pi D}n_i\|,\frac{1}{N}\})^2}\\
&\lesssim N^{2A-r}\prod_{i=1}^r\left(\sum_{|n_i|\lesssim N}\frac{1}{(\max\{\|\frac{t}{2\pi D}n_i\|,\frac{1}{N}\})^2}\right).
\end{align*}
Then by a standard estimate as in the proof of the classical Weyl inequality in one dimension, we have
\begin{align*}
\sum_{|n_i|\lesssim N}\frac{1}{(\max\{\|\frac{t}{2\pi D}n_i\|,\frac{1}{N}\})^2}
\lesssim \frac{N^3}{(\sqrt{q}(1+N\|\frac{t}{2\pi D}-\frac{a}{q}\|^{1/2}))^2},
\end{align*}
which implies the desired result 
\begin{align*}
|F|^2\lesssim \frac{N^{2A+2r}}{(\sqrt{q}(1+N\|\frac{t}{2\pi D}-\frac{a}{q}\|^{1/2}))^{2r}}.
\end{align*}
\end{proof}

\begin{rem}\label{WeylSumRemark}
Let $\lambda_0$ be a constant vector in $\mathbb{R}^r$ and $C$ a constant real number. Then we can slightly generalize the form of the function $F(t,H)$ in the above lemma into
\begin{equation*}
F(t,H)=\sum_{\lambda\in L}e^{-it|\lambda+\lambda_0|^2+i\langle\lambda, H\rangle}\varphi(\frac{|\lambda+\lambda_0|^2+C}{N^2})\cdot f
\end{equation*}
such that the conclusion of the lemma still holds. 
\end{rem}

\subsection{From a Chamber to the Whole Weight Lattice}
\label{First Reduction} 
To prove Part (ii) of Theorem \ref{MainEstimate}, we first rewrite the Schr\"odinger kernel as an exponential sum over the whole weight lattice $\Lambda$ instead of just a chamber of it, in order to apply Lemma \ref{WeylSum}. 

\begin{lem}
Recall that $D_P(H)=\sum_{s\in W}\det s\ e^{i\langle\rho, H\rangle}$ is the Weyl denominator. We have
\begin{align}
K_N(t,x)&=\frac{e^{it|\rho|^2}}{(\prod_{\alpha\in P}\langle\alpha,\rho\rangle)D_P(H)}\sum_{\lambda\in \Lambda}e^{-it|\lambda|^2+i\langle \lambda,H\rangle}\varphi(\frac{|\lambda|^2-|\rho|^2}{N^2})\prod_{\alpha\in P}\langle \alpha,\lambda\rangle\label{Schrodinger kernel1}\\
&=\frac{e^{it|\rho|^2}}{(\prod_{\alpha\in P}\langle\alpha,\rho\rangle)|W|}\sum_{\lambda\in \Lambda}e^{-it|\lambda|^2}\varphi(\frac{|\lambda|^2-|\rho|^2}{N^2})\prod_{\alpha\in P}\langle \alpha,\lambda\rangle\frac{\sum_{s\in W}\det{(s)}e^{i\langle s(\lambda),H\rangle}}{\sum_{s\in W}\det{(s)}e^{i\langle s(\rho), H\rangle}}\label{Schrodinger kernel2}
\end{align}

\end{lem}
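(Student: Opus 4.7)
The plan is to exploit the Weyl-group symmetry of the summand in the defining expression \eqref{SchrodingerKernel} together with the partition \eqref{ChamberToWhole} of the regular weights, promoting the sum over the dominant chamber $\Lambda\cap C^+$ to a sum over the full weight lattice $\Lambda$.

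First I would verify that the summand
\[
F(\lambda) := e^{-it(|\lambda|^2-|\rho|^2)}\varphi\!\left(\tfrac{|\lambda|^2-|\rho|^2}{N^2}\right)\prod_{\alpha\in P}\langle\alpha,\lambda\rangle \;\sum_{s\in W}\det(s)\,e^{i\langle s\lambda,H\rangle}
\]
is \emph{invariant} under $\lambda\mapsto w\lambda$ for any $w\in W$. The factors $|\lambda|^2$ and $\varphi(\cdot)$ are manifestly Weyl-invariant. Since $W$ permutes $\Phi$, $w$ acts on the positive roots as the composition of $s_w$ (a sign $\det(w)$) with a permutation of $P$; consequently $\prod_{\alpha\in P}\langle\alpha,w\lambda\rangle=\det(w)\prod_{\alpha\in P}\langle\alpha,\lambda\rangle$. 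Reindexing the Weyl sum $s\mapsto sw^{-1}$ gives $\sum_s\det(s)e^{i\langle sw\lambda,H\rangle}=\det(w)\sum_s\det(s)e^{i\langle s\lambda,H\rangle}$. The two sign changes cancel, proving $F(w\lambda)=F(\lambda)$. Observe also that $F(\lambda)=0$ whenever $\lambda$ lies on any wall $\alpha^\perp$, since then $\prod_{\alpha\in P}\langle\alpha,\lambda\rangle=0$. Combining these two observations with \eqref{ChamberToWhole}, one obtains
\[
\sum_{\lambda\in\Lambda}F(\lambda)=\sum_{s\in W}\sum_{\lambda\in\Lambda\cap C^+}F(s\lambda)=|W|\sum_{\lambda\in\Lambda\cap C^+}F(\lambda),
\]
and dividing the original formula for $K_N(t,x)$ by the common factor $\prod_{\alpha\in P}\langle\alpha,\rho\rangle$ and the Weyl denominator $D_P(H)$ one gets \eqref{Schrodinger kernel2} immediately.

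To pass from \eqref{Schrodinger kernel2} to \eqref{Schrodinger kernel1} I would again exploit the anti-invariance of $\prod_{\alpha\in P}\langle\alpha,\lambda\rangle$. In the inner Weyl sum $\sum_{s\in W}\det(s)\,e^{i\langle s\lambda,H\rangle}$, substitute $\lambda\mapsto s^{-1}\lambda$ on each $s$-term; this preserves $\Lambda$, $|\lambda|^2$, and $\varphi$, while turning $\prod_{\alpha\in P}\langle\alpha,s^{-1}\lambda\rangle$ into $\det(s^{-1})\prod_{\alpha\in P}\langle\alpha,\lambda\rangle$ and the exponential into $e^{i\langle\lambda,H\rangle}$. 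The two determinant signs produce $\det(s)\det(s^{-1})=1$, so every one of the $|W|$ terms in the Weyl sum contributes an identical copy of
\[
\sum_{\lambda\in\Lambda}e^{-it|\lambda|^2+i\langle\lambda,H\rangle}\varphi\!\left(\tfrac{|\lambda|^2-|\rho|^2}{N^2}\right)\prod_{\alpha\in P}\langle\alpha,\lambda\rangle.
\]
The factor $|W|$ thus cancels against the $|W|$ in the denominator of \eqref{Schrodinger kernel2}, leaving \eqref{Schrodinger kernel1}.

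There is essentially no obstacle beyond careful bookkeeping of signs under the Weyl action; the only subtle point is confirming that $W$ acts freely on the regular weights so that no nontrivial stabilizer inflates the count, which is exactly the content of \eqref{ChamberToWhole}, and that the wall contributions vanish identically because of the anti-invariant factor. Once these two facts are in hand the two identities are pure algebraic rearrangements.
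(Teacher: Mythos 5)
Your proposal is correct and rests on exactly the same ingredients as the paper's proof: Weyl invariance of $|\lambda|^2$ and of the cutoff, anti-invariance of $\prod_{\alpha\in P}\langle\alpha,\cdot\rangle$, the partition \eqref{ChamberToWhole}, and the vanishing of the wall terms. The only difference is the order of derivation — you obtain \eqref{Schrodinger kernel2} first by unfolding the chamber sum over $|W|$ copies and then fold the inner Weyl sum (via $\lambda\mapsto s^{-1}\lambda$) to get \eqref{Schrodinger kernel1}, whereas the paper proves \eqref{Schrodinger kernel1} first and then symmetrizes; this is an immaterial rearrangement of the same argument.
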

\begin{proof}
To prove \eqref{Schrodinger kernel2}, first note that from Proposition \ref{Anti-invariant by invariant} below, $\prod_{\alpha\in P}\langle\alpha,\cdot\rangle$ is an \textit{anti-invariant polynomial}, that is, 
\begin{align}\label{Weyl group dimension}
\prod_{\alpha\in P}\langle\alpha,s(\lambda)\rangle=\det(s)\prod_{\alpha\in P}\langle\alpha,\lambda\rangle, 
\end{align}
for all $\lambda\in i\mathfrak{b}^*$. Recall that the Weyl group $W$ acts on $i\mathfrak{b}^*$ isometrically, that is, 
\begin{align}\label{Weyl group isometry}
|s(\lambda)|=|\lambda|, \ \ \text{for all } s\in W, \ \ \lambda\in i\mathfrak{b}^*.
\end{align} 
Also recalling the definition \eqref{definition of rho} of $\rho$ and the definition \eqref{fundamentalchamber} of the fundamental chamber $C$, we may rewrite $K_N$ as in \eqref{SchrodingerKernel} into  
\begin{align*}
K_N(t,x)=\frac{e^{it|\rho|^2}}{(\prod_{\alpha\in P}\langle\alpha,\rho\rangle)D_P}\sum_{\lambda\in \Lambda\cap C}e^{-it|\lambda|^2}\varphi(\frac{|\lambda|^2-|\rho|^2}{N^2})
\prod_{\alpha\in P}\langle\alpha,\lambda\rangle\sum_{s\in W}\det(s)e^{i\langle s(\lambda),H\rangle}
\end{align*}
Using the \eqref{Weyl group dimension} and \eqref{Weyl group isometry}, we write 
\begin{align*} 
K_N(t,x)&=\frac{e^{it|\rho|^2}}{(\prod_{\alpha\in P}\langle\alpha,\rho\rangle)D_P}\sum_{s\in W}\sum_{\lambda\in \Lambda\cap C}e^{-it|\lambda|^2}\varphi(\frac{|\lambda|^2-|\rho|^2}{N^2})
\prod_{\alpha\in P}\langle\alpha,s(\lambda)\rangle e^{i\langle s(\lambda),H\rangle}\\
&=\frac{e^{it|\rho|^2}}{(\prod_{\alpha\in P}\langle\alpha,\rho\rangle)D_P}\sum_{s\in W}\sum_{\lambda\in \Lambda\cap C}e^{-it|s(\lambda)|^2}\varphi(\frac{|s(\lambda)|^2-|\rho|^2}{N^2})
\prod_{\alpha\in P}\langle\alpha,s(\lambda)\rangle e^{i\langle s(\lambda),H\rangle}\\
&=\frac{e^{it|\rho|^2}}{(\prod_{\alpha\in P}\langle\alpha,\rho\rangle)D_P}\sum_{\lambda\in \sqcup_{s\in W}s(\Lambda\cap C)}e^{-it|\lambda|^2}\varphi(\frac{|\lambda|^2-|\rho|^2}{N^2})
\prod_{\alpha\in P}\langle\alpha,\lambda\rangle e^{i\langle \lambda,H\rangle},\numberthis
\end{align*}
which then implies by \eqref{chamberdecomposition} that 
\begin{align*}
K_N(t,x)=\frac{e^{it|\rho|^2}}{(\prod_{\alpha\in P}\langle\alpha,\rho\rangle)D_P}\sum_{\lambda\in \Lambda}e^{-it|\lambda|^2}\varphi(\frac{|\lambda|^2-|\rho|^2}{N^2})
\prod_{\alpha\in P}\langle\alpha,\lambda\rangle e^{i\langle \lambda,H\rangle}.
\end{align*}
This proves \eqref{Schrodinger kernel1}. To prove  \eqref{Schrodinger kernel2}, write 
\begin{align}\label{KerExp}
\sum_{\lambda\in \Lambda}e^{-it|\lambda|^2+i\langle\lambda, H\rangle}
\varphi(\frac{|\lambda|^2-|\rho|^2}{N^2})\prod_{\alpha\in P}\langle\alpha,\lambda\rangle
=\sum_{\lambda\in \Lambda}e^{-it|s(\lambda)|^2+i\langle s(\lambda), H\rangle}
\varphi(\frac{|s(\lambda)|^2-|\rho|^2}{N^2})\prod_{\alpha\in P}\langle\alpha,s(\lambda)\rangle,
\end{align}
which implies using \eqref{Weyl group dimension} and \eqref{Weyl group isometry} that
\begin{equation*}
\sum_{\lambda\in \Lambda}e^{-it|\lambda|^2+i\langle\lambda, H\rangle}
\varphi(\frac{|\lambda|^2-|\rho|^2}{N^2})\prod_{\alpha\in P}\langle\alpha,\lambda\rangle
=\det(s)\sum_{\lambda\in \Lambda}e^{-it|\lambda|^2+i\langle s(\lambda), H\rangle}
\varphi(\frac{|\lambda|^2-|\rho|^2}{N^2})\prod_{\alpha\in P}\langle\alpha,\lambda\rangle,
\end{equation*}
which further implies 
\begin{align*}
\sum_{\lambda\in \Lambda}e^{-it|\lambda|^2+i\langle\lambda, H\rangle}
\varphi(\frac{|\lambda|^2-|\rho|^2}{N^2})\prod_{\alpha\in P}\langle\alpha,\lambda\rangle
=\frac{1}{|W|}\sum_{\lambda\in \Lambda}e^{-it|\lambda|^2}
\varphi(\frac{|\lambda|^2-|\rho|^2}{N^2})\prod_{\alpha\in P}\langle\alpha,\lambda\rangle\sum_{s\in W}\det(s)e^{i\langle s(\lambda), H\rangle}.
\end{align*}
This combined with \eqref{Schrodinger kernel1} yields  \eqref{Schrodinger kernel2}. 
\end{proof}

\begin{exmp}
Specializing \eqref{Schrodinger kernel1} and  \eqref{Schrodinger kernel2} to the Schr\"{o}dinger kernel \eqref{kernel for SU(2)} for $G=\text{SU}(2)$, we get 
\begin{align}\label{kernel1 for SU(2)}
K_N(t,\theta)&=\frac{e^{it}}{e^{i\theta}-e^{-i\theta}}\sum_{m\in\mathbb{Z}} e^{-itm^2+im\theta}
\varphi(\frac{m^2-1}{N^2})m\\
&=\frac{e^{it}}{2}\sum_{m\in\mathbb{Z}}e^{-itm^2}
\varphi(\frac{m^2-1}{N^2})m\cdot\frac{e^{im\theta}-e^{-im\theta}}{e^{i\theta}-e^{-i\theta}},  \ \ \theta\in\mathbb{R}/2\pi\mathbb{Z}.\label{kernel2 for SU(2)}
\end{align}
\end{exmp}

\begin{cor}\label{First}
\eqref{KeyEst} holds for the following two scenarios. \\
\underline{Scenario 1}: $x=1_G$, where $1_G$ is the identity element of $G$. \\\underline{Scenario 2}: $\|\frac{1}{2\pi}\langle\alpha,H\rangle\|\gtrsim\frac{1}{N}$ for any $x$ conjugate to $\exp H$. This is to say that the variable $H$ is away from all the cell walls $\{H\mid \|\frac{1}{2\pi}\langle\alpha,H\rangle\|=0\text{ for some }\alpha\in P\}$ by a distance of $\gtrsim\frac{1}{N}$. 
\end{cor}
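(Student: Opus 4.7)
My plan is to apply Lemma \ref{WeylSum} directly in both scenarios, though using different representations of $K_N$ because at $x=1_G$ the factor $D_P(H)$ in \eqref{Schrodinger kernel1} vanishes and must be handled separately.

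For Scenario 2, I would use \eqref{Schrodinger kernel1} verbatim and feed the inner sum into Lemma \ref{WeylSum}. Writing $\lambda=\sum_i n_iw_i$ with $|n_i|\lesssim N$ on the support of $\varphi$, the coefficient $f(\lambda)=\prod_{\alpha\in P}\langle\alpha,\lambda\rangle$ is a polynomial of degree $|P|$ in $(n_1,\dots,n_r)$, so each finite difference drops the degree by one and condition \eqref{DecCon} holds with $A=|P|$. Using Remark \ref{WeylSumRemark} to absorb the $-|\rho|^2$ shift in the cutoff, the lemma gives
\begin{equation*}
\Big|\sum_{\lambda\in\Lambda}e^{-it|\lambda|^2+i\langle\lambda,H\rangle}\varphi\!\Big(\tfrac{|\lambda|^2-|\rho|^2}{N^2}\Big)\prod_{\alpha\in P}\langle\alpha,\lambda\rangle\Big|\lesssim\frac{N^{r+|P|}}{\bigl(\sqrt{q}(1+N\|\tfrac{t}{2\pi D}-\tfrac{a}{q}\|^{1/2})\bigr)^{r}}.
\end{equation*}
The hypothesis $\|\tfrac{1}{2\pi}\langle\alpha,H\rangle\|\gtrsim \tfrac{1}{N}$ for every $\alpha\in P$ yields $|e^{i\langle\alpha,H\rangle}-1|\gtrsim 1/N$, and hence $|D_P(H)|\gtrsim N^{-|P|}$. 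Multiplying through by $|D_P(H)|^{-1}\lesssim N^{|P|}$ and absorbing the harmless constant $\prod_{\alpha\in P}\langle\alpha,\rho\rangle$, the total bound is $N^{r+2|P|}=N^{d}$ divided by the denominator, as required.

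For Scenario 1, I instead start from $\chi_\lambda(1_G)=d_\lambda$, so
\begin{equation*}
K_N(t,1_G)=\sum_{\lambda\in\hat G}e^{-it(|\lambda|^2-|\rho|^2)}\varphi\!\Big(\tfrac{|\lambda|^2-|\rho|^2}{N^2}\Big)d_\lambda^{2}.
\end{equation*}
Both $|\lambda|^2$ and $d_\lambda^2=[\prod_{\alpha\in P}\langle\alpha,\lambda\rangle]^{2}/[\prod_{\alpha\in P}\langle\alpha,\rho\rangle]^{2}$ are Weyl invariant by \eqref{Weyl group isometry} and \eqref{Weyl group dimension}, so the same unfolding argument used to derive \eqref{Schrodinger kernel1}, based on \eqref{ChamberToWhole}, lets me rewrite the sum as $|W|^{-1}$ times the analogous sum over the full lattice $\Lambda$. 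The resulting coefficient $f(\lambda)=[\prod_{\alpha\in P}\langle\alpha,\lambda\rangle]^{2}$ is a polynomial of degree $2|P|=d-r$ on $\Lambda\cong\mathbb Z^{r}$, so \eqref{DecCon} holds with $A=d-r$. Applying Lemma \ref{WeylSum} with $H=0$ (and Remark \ref{WeylSumRemark} for the shift) gives $|K_N(t,1_G)|\lesssim N^{r+(d-r)}/(\cdots)^{r}=N^{d}/(\cdots)^{r}$.

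The only real subtlety is the one already signaled above: at $H=0$ the factorization \eqref{Schrodinger kernel1} of $K_N$ breaks because $D_P(0)=0$, so the $|D_P(H)|^{-1}$ estimate used in Scenario 2 is unavailable. The main obstacle is therefore to recognize that the cancellation hidden in $\chi_\lambda(1_G)=d_\lambda$ is exactly what must replace the missing $N^{|P|}$ from $|D_P|^{-1}$: by squaring the Weyl-dimension polynomial and thereby raising the parameter $A$ in Lemma \ref{WeylSum} from $|P|$ to $2|P|$, one recovers the same total exponent $r+2|P|=d$ in the final bound. In both scenarios the bookkeeping $d=r+2|P|$ is what makes the estimate come out clean.
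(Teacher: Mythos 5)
Your proposal is correct and follows essentially the same route as the paper: Scenario 2 is handled exactly as in the text (Lemma \ref{WeylSum} with $f=\prod_{\alpha\in P}\langle\alpha,\lambda\rangle$, $A=|P|$, combined with $|D_P(H)|\gtrsim N^{-|P|}$), and in Scenario 1 your unfolding of the chamber sum via Weyl invariance of $d_\lambda^2$ and \eqref{ChamberToWhole} reproduces the same lattice sum \eqref{KN(t,1)} with $A=2|P|$ that the paper obtains directly from \eqref{Schrodinger kernel2}.
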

\begin{proof}
Scenario 1: When $x=1_G$, the character equals $\chi_\lambda(1_G)=d_\lambda=\prod_{\alpha\in P}\langle\alpha,\lambda\rangle/\prod_{\alpha\in P}\langle\alpha,\rho\rangle$. Then by \eqref{Schrodinger kernel2}, the Schr\"odinger kernel at $x=1_G$ equals 
\begin{equation}\label{KN(t,1)}
K_N(t,1_G)=\frac{e^{it|\rho|^2}}{(\prod_{\alpha\in P}\langle\alpha,\rho\rangle)^2|W|}\sum_{\lambda\in \Lambda}e^{-it|\lambda|^2}\varphi(\frac{|\lambda|^2-|\rho|^2}{N^2})(\prod_{\alpha\in P}\langle \alpha,\lambda\rangle)^2.
\end{equation}
Note that $f(\lambda)=\left(\prod_{\alpha\in P}\langle \alpha,\lambda\rangle\right)^2$ is a polynomial in the variable $\lambda=n_1w_1+\cdots+n_rw_r\in\Lambda$ of degree $2|P|$, which equals $d-r$ by \eqref{|P|}. Thus $f$ is also a pseudo-polynomial of degree $d-r$. Then the desired estimate is a direct consequence of Lemma \ref{WeylSum}. \\
Scenario 2: By Lemma 4.13.4 of Chapter 4 in \cite{Var84}, the Weyl denominator $D_P=\sum_{s\in W}(\det s) e^{i\langle s(\rho), H\rangle}$ can be rewritten as  
\begin{align}\label{Weyldenominator}
D_P=e^{-i\langle\rho, H\rangle}\prod_{\alpha\in P} (e^{i\langle\alpha, H\rangle}-1). 
\end{align}
Note that 
\begin{align*}
1\lesssim \frac{|e^{i\langle\alpha,H\rangle}-1|}{\|\frac{1}{2\pi}\langle\alpha,H\rangle\|}\lesssim 1.
\end{align*}
Then by assumption the Weyl denominator 
satisfies 
\begin{equation}\label{BoundOnDPH}
|D_P(H)|\gtrsim \prod_{\alpha\in P}\|\frac{1}{2\pi}\langle\alpha,H\rangle\|\gtrsim N^{-|P|}.  
\end{equation}
Let 
$$F=\sum_{\lambda\in\Lambda}e^{-it|\lambda|^2+i\langle\lambda,H\rangle}\varphi(\frac{|\lambda|^2-|\rho|^2}{N^2})\cdot f,$$ 
where $f=\prod_{\alpha\in P}\langle\alpha,\lambda\rangle$. Note that $f$ is a polynomial and thus also a pseudo-polynomial of degree $|P|$ in $\lambda$. Applying Lemma \eqref{WeylSum} to $F$ we get 
\begin{equation*}
|K_N(t,x)|=|\frac{e^{it|\rho|^2}}{(\prod_{\alpha\in P}\langle\alpha,\rho\rangle)D_P(H)}|\cdot |F|\lesssim|\frac{1}{D_P(H)}|\cdot |F|\lesssim N^{|P|}\cdot \frac{N^{r+|P|}}{(\sqrt{q}(1+N\|\frac{t}{2\pi D}-\frac{a}{q}\|^{1/2}))^{r}}.
\end{equation*}
Recalling $|P|=\frac{d-r}{2}$, we establish \eqref{KeyEst} for Scenario 2. 
\end{proof}

\begin{exmp}\label{SU(2)0}
We specialize the Schr\"{o}dinger kernel \eqref{kernel1 for SU(2)} and \eqref{kernel2 for SU(2)} to the case of $G=\text{SU}(2)$.  Scenario 1 in the above corollary corresponds to when $\theta\in 2\pi\mathbb{Z}$ and 
\begin{align}\label{KNtheta1}
K_N(t,\theta)=\frac{e^{it}}{2}\sum_{m\in\mathbb{Z}}e^{-itm^2}
\varphi(\frac{m^2-1}{N^2})m^2, \ \ |K_N(t,\theta)|\lesssim\left|\sum_{m\in\mathbb{Z}}e^{-itm^2}
\varphi(\frac{m^2-1}{N^2})m^2\right|.
\end{align}
Scenario 2 corresponds to when $|e^{i\theta}-e^{-i\theta}|\gtrsim \frac{1}{N}$, equivalently, when $\theta$ is away from the cell walls $\{0,\pi\}$ by a distance $\gtrsim\frac{1}{N}$. In this case, 
\begin{align}\label{KNtheta2}
|K_N(t,\theta)|\lesssim \left|\frac{1}{e^{i\theta}-e^{-i\theta}}\right|\cdot \left|\sum_{m\in\mathbb{Z}} e^{-itm^2+im\theta}
\varphi(\frac{m^2-1}{N^2})m\right|.
\end{align}
Then we get the desired estimates for \eqref{KNtheta1} and \eqref{KNtheta2} using Lemma \ref{WeylSum}. 
\end{exmp}

\subsection{Pseudo-polynomial Behavior of Characters}
\label{Pseudo-polynomial Behavior of Characters}
We have established the key estimates \eqref{KeyEst} for when the variable $\exp H$ in the maximal torus is either the identity or away from all the cell walls by a distance of $\gtrsim\frac{1}{N}$. To establish \eqref{KeyEst} fully, we need to look at the scenarios when the variable $\exp H$ is close to the some of the cell walls within a distance of $\lesssim\frac{1}{N}$. In this section, we first deal with the scenario when the variable $\exp H$ is close to all the cell walls within a distance of $\lesssim\frac{1}{N}$. To achieve this end, we first prove the following crucial lemma on the pseudo-polynomial behavior of characters.

\begin{lem}\label{EstBGG1}
Let $\mu\in i\mathfrak{b}^*$. For $\lambda\in i\mathfrak{b}^*$, define 
\begin{align*}
\chi^\mu(\lambda, H)
=\frac{\sum_{s\in W}\det{(s)}e^{i\langle s(\lambda+\mu),H\rangle}}{\sum_{s\in W}\det{(s)}e^{i\langle s(\rho), H\rangle}}. 
\end{align*}
Let $L\cong\mathbb{Z}^r$ be the weight lattice or the root lattice (or any sublattice of full rank of the weight lattice), and viewing $\chi^\mu(\lambda, H)$ as a function in $\lambda\in L$, we have 
\begin{equation}\label{Dchi}
|D_{i_1}\cdots D_{i_k}\chi^\mu(\lambda, H)|\lesssim N^{\frac{d-r}{2}-k}
\end{equation}
holds uniformly in $|\lambda|\lesssim N$, $|H|\lesssim N^{-1}$, and $N\geq 1$, for all $k\in\mathbb{Z}_{\geq 0}$. In other words, $\chi^\mu(\lambda, H)$ is a pseudo-polynomial of degree $\frac{d-r}{2}$ in $\lambda$ uniformly in $|H|\lesssim N^{-1}$. 
\end{lem}

Using this lemma, applying Lemma \ref{WeylSum} to the Schr\"odinger kernel $K_N$ in the form of \eqref{Schrodinger kernel2}, we immediately get the following corollary. 

\begin{cor}\label{neighborhoodofidentity}
Inequality \eqref{KeyEst} holds uniformly when $x\in G$ is conjugate to $\exp H$ such that $|H|\lesssim N^{-1}$. In other words, when $x$ is within $\lesssim N^{-1}$ a distance from the identity $1_G$. 
\end{cor}

We now prove Lemma \ref{EstBGG1} for $L\cong\mathbb{Z}w_1+\cdots+\mathbb{Z}w_r$ being the weight lattice (the case for the root lattice or any other sublattice can be proved similarly). First note that as $|H|\lesssim N^{-1}$ for $N$ large enough, by \eqref{Weyldenominator}, we have 
\begin{align*}
\left|\frac{\prod_{\alpha\in P}\langle \alpha, H\rangle}{D_P}\right|\approx 1. 
\end{align*}
Thus it suffices to show \eqref{Dchi} replacing $\chi^\mu(\lambda,H)$ by  
\begin{align}\label{chi'lambdaH}
\chi^\mu_1(\lambda, H)=\frac{\sum_{s\in W}\det{(s)}e^{i\langle s(\lambda+\mu),H\rangle}}{\prod_{\alpha\in P}\langle \alpha, H\rangle}. 
\end{align}

\subsubsection{Approach 1: via BGG-Demazure Operators}
The idea is to expand the numerator of $\chi^\mu_1(\lambda, H)$ into a power series of polynomials in $H\in i\mathfrak{b}^*$, which are \textit{anti-invariant} with respect to the Weyl group $W$, and then to estimate the quotients of these polynomial over the denominator $\prod_{\alpha\in P}\langle \alpha, H\rangle$. We will see that these quotients are in fact polynomials in $H\in i\mathfrak{b}^*$, and can be more or less explicitly computed by the  \textit{BGG-Demazure operators}. We now review the basic definitions and facts of the BGG-Demazure operators and the related invariant theory. A good reference is Chapter IV in \cite{Hil82}. 

From now on, we fix an inner product space $(\mathfrak{a}, \langle\ , \ \rangle)$ and let $\Phi$ be an integral root system in the dual space $(\mathfrak{a}^*, \langle\ , \ \rangle)$. 
Let $P(\mathfrak{a})$ be the space of polynomial functions on $\mathfrak{a}$. The orthogonal group $O(\mathfrak{a})$ with respect to the inner product on $\mathfrak{a}$, in particular the Weyl group, acts on $P(\mathfrak{a})$ by 
\begin{align*}
(sf)(H):=f(s^{-1}H), \ \ s\in O(\mathfrak{a}), \ \ f\in P(\mathfrak{a}), \ \ H\in \mathfrak{a}. 
\end{align*}

\begin{defn}
For $\alpha\in \mathfrak{a}^*$, let $s_\alpha:\mathfrak{a}\to \mathfrak{a}$ denote the reflection about the hyperplane 
$$\{H\in\mathfrak{a}: \alpha(H)=0\},$$
that is, 
\begin{equation*}
s_\alpha(H):=H-2\frac{\alpha(H)}{\langle\alpha,\alpha\rangle}H_\alpha
\end{equation*}
where $H\in \mathfrak{a}$. Here $H_\alpha$ corresponds to $\alpha$ through the identification $\mathfrak{a}\xrightarrow{\sim}\mathfrak{a}^*$. 
Define the \textit{BGG-Demazure operator} $\Delta_\alpha: P(\mathfrak{a})\to P(\mathfrak{a})$ associated to $\alpha\in \mathfrak{a}^*$ by 
\begin{equation*}
\Delta_\alpha(f)=\frac{f-s_\alpha(f)}{\alpha}.
\end{equation*}
\end{defn}

As an example, we compute $\Delta_\alpha(\lambda^m)$ for $\lambda\in \mathfrak{a}^*$. 
\begin{align*}
\Delta_\alpha(\lambda^m)
&=\frac{\lambda^m-\lambda(\cdot-2\frac{\alpha}{\langle \alpha,\alpha\rangle}H_\alpha)^m}{\alpha}\\
&=\frac{\lambda^m-(\lambda-2\frac{\langle\lambda,\alpha,\rangle}{\langle\alpha,\alpha\rangle}\alpha)^m}{\alpha}\\
&=\sum_{i=1}^m(-1)^{i-1}\binom{m}{i}\frac{2^i}{\langle\alpha,\alpha\rangle^i}\langle\lambda,\alpha\rangle^i\alpha^{i-1}\lambda^{m-i} \numberthis \label{BGGMon}.
\end{align*}
This computation in particular implies that for any $f\in P(\mathfrak{a})$, the operator $\Delta_\alpha(f)$ lowers the degree of $f$ by at least 1. 

Let $P(\mathfrak{a})^W$ denote the subspace of $P(\mathfrak{a})$ that are invariant under the action of the Weyl group $W$, that is, 
\begin{equation*}
P(\mathfrak{a})^W:=\{f\in P(\mathfrak{a}) \mid  sf=f \text{ for all }s\in W\}. 
\end{equation*}
We call $P(\mathfrak{a})^W$ the space of \textit{invariant polynomials}. We also define 
\begin{equation*} 
P(\mathfrak{a})^W_{\det}:=\{f\in P(\mathfrak{a}) \mid  sf=(\det s) f\text{ for all }s\in W\}.
\end{equation*}
We call $P(\mathfrak{a})^W_{\det}$ the space of \textit{anti-invariant polynomials}. We have the following proposition which states that $P(\mathfrak{a})^W_{\det}$ is a free $P(\mathfrak{a})^W$-module of rank 1. 

\begin{prop}[Chapter II, Proposition 4.4 in \cite{Hil82}]\label{Anti-invariant by invariant}
Define $d_{\det}\in P(\mathfrak{a})$ by 
\begin{equation*}
d_{\det}=\prod_{\alpha\in P}\alpha.
\end{equation*}
Then $d_{\det}\in P(\mathfrak{a})^W_{\det}$ and 
\begin{equation*}
P(\mathfrak{a})^W_{\det}=d_{\det} \cdot P(\mathfrak{a})^W. 
\end{equation*}
\end{prop}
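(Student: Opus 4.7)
The plan is to establish both assertions by standard arguments from the theory of reflection groups applied to the polynomial ring $P(V)$, using the fact that $W$ is generated by the simple reflections $s_{\alpha_1},\ldots,s_{\alpha_r}$.

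First I would verify that $d_{\det}\in P(V)^W_{\det}$. Since $W$ is generated by simple reflections, it suffices to check $s_{\alpha_i}\cdot d_{\det}=-d_{\det}$ for each simple root $\alpha_i$. The key classical fact I would invoke is that the simple reflection $s_{\alpha_i}$ sends $\alpha_i$ to $-\alpha_i$ and permutes $P\setminus\{\alpha_i\}$ among itself. Granting this, the action of $s_{\alpha_i}$ on the product $\prod_{\alpha\in P}\langle\alpha,x\rangle$ merely flips the sign of the factor corresponding to $\alpha_i$ while permuting the remaining factors, yielding the factor $\det s_{\alpha_i}=-1$.

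Next I would show $P(V)^W_{\det}\subseteq d_{\det}\cdot P(V)^W$. Fix $f\in P(V)^W_{\det}$ and $\alpha\in P$. The identity $s_\alpha f=-f$ forces $f$ to vanish identically on the hyperplane $\alpha^\perp$, so the irreducible linear polynomial $\langle\alpha,\cdot\rangle$ divides $f$ in the UFD $P(V)$. Because distinct positive roots are non-proportional, the linear factors $\{\langle\alpha,\cdot\rangle\}_{\alpha\in P}$ are pairwise coprime, so their product $d_{\det}$ divides $f$: write $f=d_{\det}\cdot g$ with $g\in P(V)$. For any $s\in W$,
\begin{equation*}
(\det s)\,d_{\det}\cdot g=(\det s)f=sf=(s\cdot d_{\det})(s\cdot g)=(\det s)\,d_{\det}\cdot(s\cdot g),
\end{equation*}
and cancelling $(\det s)\,d_{\det}$ (which is not a zero divisor in $P(V)$) yields $s\cdot g=g$, so $g\in P(V)^W$. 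The reverse inclusion $d_{\det}\cdot P(V)^W\subseteq P(V)^W_{\det}$ is immediate from the first part and the fact that $\det$ is a homomorphism.

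The only nontrivial ingredient is the classical lemma that each simple reflection permutes the non-simple positive roots; I would either quote it or note it follows from expressing any positive root in terms of simple roots with non-negative integer coefficients and observing that $s_{\alpha_i}$ preserves this coefficient property for any root other than $\alpha_i$ itself. No other obstacles arise: the divisibility argument is just unique factorization plus coprimality of distinct linear forms.
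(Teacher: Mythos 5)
Your argument is correct, and it is the standard textbook proof of this fact. Note that the paper does not prove the proposition at all — it simply quotes it from Hiller's \emph{Geometry of Coxeter groups} (Chapter II, Proposition 4.4) — and your self-contained argument is essentially the one found in that reference and in the usual sources on reflection groups: anti-invariance of $d_{\det}$ via the lemma that $s_{\alpha_i}$ negates $\alpha_i$ and permutes the remaining positive roots, then divisibility of any anti-invariant $f$ by each linear form $\langle\alpha,\cdot\rangle$ (from $s_\alpha f=-f$ forcing vanishing on $\alpha^\perp$), coprimality of the distinct linear forms in the UFD $P(V)$, and cancellation to see that the quotient is invariant. All steps are sound, including the implicit use of the fact that vanishing on a hyperplane over an infinite field implies divisibility by its defining linear form, and that distinct positive roots are non-proportional so the forms are pairwise coprime.
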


By the above proposition, given any anti-invariant polynomial  $f$, we have $f=d\cdot g$ where $g$ is invariant. We call $g$ the \textit{invariant part} of $f$. The BGG-Demazure operators provide a procedure that computes the invariant part of any anti-invariant  polynomial. We describe this procedure as follows. The Weyl group $W$ is generated by the reflections  $s_{\alpha_1}, \cdots, s_{\alpha_r}$ where $S=\{\alpha_1,\cdots,\alpha_r\}$ is the set of simple roots. Define the \textit{length} of $s\in W$ to be the smallest number $k$ such that $s$ can be written as $s=s_{\alpha_{i_1}}\cdots s_{\alpha_{i_k}}$. The longest element $s$ in $W$ is of length $|P|=\frac{d-r}{2}$, 
and such $s$ is unique; see Section 1.8 in \cite{Hum90}. Write $s=s_{\alpha_{i_1}}\cdots s_{\alpha_{i_L}}$. Set 
\begin{equation*}
\delta=\Delta_{\alpha_{i_1}}\cdots\Delta_{\alpha_{i_L}}
\end{equation*}
and note that it is well defined in the sense it does not depend on the particular choice of the decomposition $s=s_{\alpha_{i_1}}\cdots s_{\alpha_{i_L}}$; see Chapter IV, Proposition 1.7 in \cite{Hil82}. 

\begin{prop}[Chapter IV, Proposition 1.6 in \cite{Hil82}]\label{BGGAnti}
We have 
\begin{equation*}
\delta f =\frac{|W|}{d_{\det }}\cdot f
\end{equation*}
for all $f\in P(\mathfrak{a})_{\det}^W$.  
\end{prop}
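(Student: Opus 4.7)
My approach is to reduce the statement to the single computation $\delta(d_{\det})=|W|$, using the $P(V)^W$-module structure supplied by Proposition \ref{Anti-invariant by invariant}. By that proposition, each $f\in P(V)^W_{\det}$ factors uniquely as $f=g\cdot d_{\det}$ with $g\in P(V)^W$. I would first record the one-step identity $\Delta_\alpha(gh)=g\,\Delta_\alpha(h)$ whenever $s_\alpha g=g$, which is immediate from $\Delta_\alpha=\alpha^{-1}(1-s_\alpha)$. Iterating: since every simple reflection fixes any $W$-invariant, we can pull $g$ past all $|P|$ Demazure operators composing $\delta$, obtaining $\delta(f)=g\cdot \delta(d_{\det})$. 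It therefore suffices to show $\delta(d_{\det})=|W|$.

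For this remaining step I would establish the alternating-sum representation
\begin{equation*}
\delta(h)=\frac{1}{d_{\det}}\sum_{s\in W}(\det s)\,s(h), \qquad h\in P(V).
\end{equation*}
Substituting $h=d_{\det}$ and using the anti-invariance $s(d_{\det})=(\det s)d_{\det}$ makes each summand equal to $d_{\det}$, so the right-hand side collapses to $|W|$, giving $\delta(d_{\det})=|W|$ and completing the proposition. The alternating-sum formula itself I would prove by induction on the length $\ell$ of a reduced decomposition $s_0=s_{\alpha_{i_1}}\cdots s_{\alpha_{i_{|P|}}}$ of the longest Weyl element: at each length $\ell$ one establishes a corresponding identity for $\Delta_{\alpha_{i_1}}\cdots \Delta_{\alpha_{i_\ell}}$ as a signed sum indexed by the Bruhat interval $[1,s_{\alpha_{i_1}}\cdots s_{\alpha_{i_\ell}}]$ with denominator a product of transformed simple roots. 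The inductive step is a direct application of $\Delta_\alpha=\alpha^{-1}(1-s_\alpha)$, and because $s_0$ sends the set of positive roots to its negative, at length $|P|$ the denominator becomes $\pm d_{\det}$ and the index set becomes all of $W$, yielding the claimed formula.

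The main obstacle is the combinatorial bookkeeping in this inductive proof: tracking signs, the evolving denominator, and the cancellations among different subword expressions of the same Weyl group element. A cleaner alternative would be to observe that both sides of the alternating-sum formula are $P(V)^W$-linear endomorphisms of $P(V)$ that drop degree by $|P|$ and vanish on polynomials of degree less than $|P|$; since Proposition \ref{Anti-invariant by invariant} identifies the anti-invariants as a free rank-one $P(V)^W$-module generated by $d_{\det}$, checking agreement on $d_{\det}$ alone (where the right-hand side is manifestly $|W|$) would suffice, reducing everything to a direct computation of $\delta(d_{\det})$. Either route funnels through the same essential content, and once established the proposition follows in two lines.
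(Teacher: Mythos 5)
The paper itself offers no proof of this proposition --- it is quoted verbatim from Hiller \cite{Hil82} (Ch.~IV, Prop.~1.6) --- so the only question is whether your argument stands on its own. Your first reduction is correct and standard: since $\Delta_\alpha(gh)=g\,\Delta_\alpha(h)$ whenever $s_\alpha g=g$, a $W$-invariant $g$ passes through every factor of $\delta$, so writing $f=g\,d_{\det}$ via Proposition \ref{Anti-invariant by invariant} gives $\delta f=g\,\delta(d_{\det})$, and the proposition becomes equivalent to the single evaluation $\delta(d_{\det})=|W|$.

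The genuine gap is that you never actually establish that evaluation. Your main route is the operator identity $\delta=d_{\det}^{-1}\sum_{s\in W}(\det s)\,s$, which is true and classical, but your inductive proof of it is only sketched: for intermediate words the expansion of $\Delta_{\alpha_{i_1}}\cdots\Delta_{\alpha_{i_\ell}}$ in the twisted group algebra has coefficients that are element-dependent rational functions (different denominators for different elements of the Bruhat interval), and the cancellation bookkeeping you defer is exactly the content of the identity. The ``cleaner alternative'' does not repair this: $P(V)$ is a free $P(V)^W$-module of rank $|W|$, not rank one --- rank-one freeness holds only for $P(V)^W_{\det}$ --- so two $P(V)^W$-linear operators that agree on $d_{\det}$ can only be said to agree on the anti-invariants; and, more to the point, this route still presupposes the value $\delta(d_{\det})=|W|$, i.e.\ it is circular as a way of avoiding that computation. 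A clean way to close the gap: using $\Delta_\alpha^2=0$ and the independence of $\delta$ from the choice of reduced word (quoted in the paper from Hiller IV.1.7), choose a reduced word beginning with $s_\alpha$ to get $\Delta_\alpha\circ\delta=0$, hence $s_\alpha\circ\delta=\delta$ and so $w\circ\delta=\delta$ for all $w\in W$; expanding $\delta=\sum_{v\in W}c_v\,v$ with rational-function coefficients, only the full subword produces $w_0$, giving $c_{w_0}=\det(w_0)/d_{\det}$ because $\{\alpha_{i_1},s_{\alpha_{i_1}}\alpha_{i_2},\dots\}$ is precisely $P$, and left $W$-invariance then forces $c_v=\det(v)/d_{\det}$ for every $v$. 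That is your alternating-sum formula, and applying it to $d_{\det}$ yields $|W|$; alternatively, simply cite Hiller for the proposition, as the paper does.
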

That is, the operator $\delta$ produces the invariant part of any anti-invariant polynomial (modulo a multiplicative constant). 
As an example, we compute $\delta=\Delta_{\alpha_{i_1}}\cdots\Delta_{\alpha_{i_{L}}}$ on $\lambda^m$. Proceed inductively  using \eqref{BGGMon}, we arrive at the following proposition. 

\begin{prop}\label{BGGMon2}
Let $m\geq L$. Then
\begin{align*} 
\delta(\lambda^m)=\sum_{\theta, a(\alpha,\beta),b(\gamma),c(\zeta), \eta\in\mathbb{Z}} (-1)^\theta \prod_{\alpha\leq\beta} \langle\alpha_{i_{\alpha}},  \alpha_{i_{\beta}}\rangle^{a(\alpha,\beta)} \prod_{\gamma} \langle\lambda,\alpha_{i_\gamma}\rangle^{b(\gamma)}\prod_{\zeta}\alpha_{i_\zeta}^{c(\zeta)}\lambda^{\eta}
\end{align*}
such that the following statements are true. \\
(1) In each term of the sum,  $\sum_{\gamma}b(\gamma)+\eta=m$. \\
(2) In each term of the sum, $\sum_{\zeta}c(\zeta)+\eta=m-L$. \\
(3) In each term of the sum, $\sum_{\gamma}b(\gamma)-\sum_{\zeta}c(\zeta)=L$. \\
(4) In each term of the sum, $|a(\alpha,\beta)|\leq mL$ and $b(\gamma), c(\zeta), \eta=0,1,\ldots,m$. \\
(5) There are in total less than $3^{mL}$ terms in the sum. 
\end{prop}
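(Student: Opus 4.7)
The plan is to prove the proposition by induction on the number $k$ of BGG-Demazure operators applied so far, processing $\Delta_{\alpha_{i_{|P|}}}, \Delta_{\alpha_{i_{|P|-1}}}, \dots, \Delta_{\alpha_{i_1}}$ in that order (innermost first). The base case $k=0$ is the single term $\lambda^m$, with $\eta=m$ and all other exponents zero; conditions (1)--(5) are trivially satisfied (with $|P|$ replaced by $0$ in (2)--(3)).

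For the inductive step, consider a typical term produced after $k$ operators, of the form
\[
T = (-1)^\theta \prod_{\alpha\leq\beta} \langle\alpha_{i_\alpha},\alpha_{i_\beta}\rangle^{c(\alpha,\beta)} \prod_\gamma\langle\lambda,\alpha_{i_\gamma}\rangle^{c(\gamma)} \prod_\zeta \alpha_{i_\zeta}^{c(\zeta)} \lambda^\eta.
\]
Since scalar factors pull out of the linear operator $\Delta_{\alpha_{i_{k+1}}}$, it suffices to expand $\Delta_{\alpha_{i_{k+1}}}$ applied to the polynomial $M=\prod_\zeta \alpha_{i_\zeta}^{c(\zeta)} \lambda^\eta$. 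I would view $M$ as a product of $\sum_\zeta c(\zeta)+\eta$ linear factors and iterate the twisted Leibniz rule $\Delta_\alpha(fg) = \Delta_\alpha(f)\,g + s_\alpha(f)\,\Delta_\alpha(g)$, together with the elementary identities $\Delta_\alpha(\mu) = 2\langle\mu,\alpha\rangle/\langle\alpha,\alpha\rangle$ and $s_\alpha(\mu) = \mu - 2\langle\mu,\alpha\rangle\alpha/\langle\alpha,\alpha\rangle$ for any linear polynomial $\mu$. Each resulting monomial slots back into the asserted canonical form, with the new factors $\langle\alpha_{i_\cdot},\alpha_{i_{k+1}}\rangle$, $\langle\lambda,\alpha_{i_{k+1}}\rangle$, and powers of $\alpha_{i_{k+1}}$ absorbed into the indexed products.

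Properties (1)--(4) are each conservation laws that follow from bookkeeping through the induction. For (1), each $\Delta_\alpha$ preserves the total $\lambda$-homogeneity, as is already visible in \eqref{BGGMon}; this extends to arbitrary monomials since $s_\alpha$ is linear in $\lambda$ as well. For (2), each $\Delta_\alpha$ lowers the polynomial degree in $x\in V$ by exactly one, because $(\mathrm{Id}-s_\alpha)f$ vanishes on $\alpha^\perp$ and so is divisible by the linear form $\alpha$; after $|P|$ steps the polynomial degree has dropped from $m$ to $m-|P|$. Property (3) is the difference of (1) and (2). Property (4) holds because each exponent $c(\gamma), c(\zeta), \eta$ is bounded above by the current polynomial degree and hence by $m$, while each step contributes at most one unit of $\langle\alpha_{i_\cdot},\alpha_{i_{k+1}}\rangle$ per linear factor touched, so $|c(\alpha,\beta)|\leq m|P|$ after all $|P|$ steps.

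For the term count (5), I would argue that when $\Delta_{\alpha_{i_{k+1}}}$ is applied to a monomial of polynomial degree $D=m-k$, the twisted Leibniz expansion across $D$ linear factors yields a sum indexed by a ternary tree of depth $D$ (each factor is either left alone via the $s_\alpha$-action, replaced by the constant $\Delta_\alpha$-image, or contributes the $s_\alpha$-correction term which is $-2\langle\cdot,\alpha\rangle\alpha/\langle\alpha,\alpha\rangle$), for at most $3^D$ new monomials. The crude multiplicative bound over $|P|$ steps is $3^m\cdot 3^{m-1}\cdots 3^{m-|P|+1}\leq 3^{m|P|}$, matching the claim. The main obstacle is purely bookkeeping: choosing an unambiguous indexing scheme so that every monomial produced by the twisted Leibniz expansion can be identified with a term of the stated canonical shape, and simultaneously tracking the four conservation laws through the induction. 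Once the indexing is fixed, each of (1)--(5) reduces to a short verification.
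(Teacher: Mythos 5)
Your proposal is correct and is essentially the paper's argument: the paper likewise obtains the proposition by iterating the one-operator computation \eqref{BGGMon} through the $|P|$ factors of $\delta$ and tracking degrees, signs, and term counts inductively. Organizing each step via the twisted Leibniz rule $\Delta_\alpha(fg)=\Delta_\alpha(f)g+s_\alpha(f)\Delta_\alpha(g)$ rather than by directly expanding $f-s_\alpha f$ binomially is only a cosmetic difference (note that the factors of $2$ and binomial coefficients are absorbed by counting repeated terms, which is exactly how the bound $\sum_i\binom{m}{i}2^i=3^m-1<3^m$ per step, hence $<3^{m|P|}$ overall, is meant).
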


Note that since each BGG-Demazure operator $\Delta_{\alpha_{i_j}}$ in $\delta=\Delta_{\alpha_{i_1}}\cdots\Delta_{\alpha_{i_{L}}}$ lowers the degree of polynomials by at least 1, $\delta$ lowers the   degree by at least $L$. Thus 
\begin{equation}\label{m<|P|}
\delta(\lambda^m)=0, \ \ \text{ for } m<L.
\end{equation}

\begin{exmp}
We specialize the discussion to the case $M=\text{SU}(2)$. Recall that $\mathfrak{a}^*=\mathbb{R}w$ where $w$ is the fundamental weight, and $\Phi=\{\pm \alpha\}$ with $\alpha=2w$. $P(\mathfrak{a})$ consists of polynomials in the variable $\lambda\in\underset{1}{\mathbb{R}}\underset{\mapsto}{\cong} \underset{w}{\mathbb{R}w}$. For $\lambda\in\underset{1}{\mathbb{R}}\underset{\mapsto}{\cong} \underset{w}{\mathbb{R}w}$,  and $f\in P(\mathfrak{a})$, we have  
\begin{align*}
(\delta f)(\lambda)&=\frac{f(\lambda)-f(-\lambda)}{2\lambda},
\\
\delta (\lambda^m)&=\left\{\begin{array}{ll}
\lambda^{m-1}, & m \text{ odd},\\
0, & m\text{ even},
\end{array}
\right. \\
d_{\det}(\lambda)&=2\lambda.\numberthis \label{ddetlambda}
\end{align*}
\end{exmp}

We can now finish the proof of \eqref{Dchi}. 
\begin{proof}[Proof of Lemma \ref{EstBGG1}]
Recall that it suffices to prove \eqref{Dchi} replacing $\chi^\mu(\lambda,H)$ by  
$\chi^\mu_1(\lambda, H)$ in \eqref{chi'lambdaH}. 
Using power series expansions, write 
\begin{align*}
\sum_{s\in W}(\det s) e^{i\langle\lambda+\mu, H\rangle}&=\sum_{s\in W}\det s\ \sum_{m=0}^\infty \frac{1}{m!}(i\langle s(\lambda+\mu), H\rangle)^m\\
&=\sum_{m=0}^\infty \frac{i^m}{m!}
\sum_{s\in W}\det s\ \langle s(\lambda+\mu), H\rangle^m.\numberthis \label{chipower}
\end{align*}
Note that 
\begin{equation}\label{fmH0}
f_m(H)=f_m(\lambda)=f_m(\lambda, H):=\sum_{s\in W}\det s\ \langle s(\lambda+\mu), H\rangle^m
\end{equation} 
is an anti-invariant polynomial in $H$ with respect to the Weyl group $W$, thus by Proposition \ref{BGGAnti}, 
\begin{equation*}
f_m(H)=\frac{d_{\det}(H)}{|W|}\cdot \delta f_m(H)=\frac{\prod_{\alpha\in P}\langle\alpha,H\rangle}{|W|}\cdot \delta f_m(H).
\end{equation*}
This implies that we can rewrite \eqref{chi'lambdaH} as  
\begin{equation*}
\chi^{\mu_1}(\lambda,H)=\frac{1}{|W|}\sum_{m=0}^\infty\frac{i^m}{m!} \delta f_m(H). 
\end{equation*}
Thus to prove \eqref{Dchi}, it suffices to prove that 
\begin{equation*}
\sum_{m=0}^\infty\frac{1}{m!}\left|D_{i_1}\cdots D_{i_k} \left(\delta f_m(\lambda)\right)\right|\lesssim N^{L-k},
\end{equation*}
for all $k\in\mathbb{Z}_{\geq 0}$, uniformly in $|n_i|\lesssim N$, where $\lambda=n_1w_1+\cdots+n_rw_r$. Then by $\eqref{fmH0}$, it suffices to prove that 
\begin{equation*}
\sum_{m=0}^\infty\frac{1}{m!}\left|D_{i_1}\cdots D_{i_k} \left(\delta \left[\left(s(\lambda+\mu)\right)^m\right]\right)\right|\lesssim N^{L-k},\  \ \forall s\in W. 
\end{equation*}
Without loss of generality, it suffices to show 
\begin{equation}\label{Differencing}
\sum_{m=0}^\infty\frac{1}{m!}\left|D_{i_1}\cdots D_{i_k} \left(\delta \left[(\lambda+\mu)^m\right]\right)\right|\lesssim N^{L-k}.
\end{equation}
Noting \eqref{m<|P|}, it suffices to consider cases when $m\geq L$. We apply Proposition \ref{BGGMon2} to write 
\begin{align*}
&\delta((\lambda+\mu)^m)(H)\\
&=\sum_{\theta, a(\alpha,\beta),b(\gamma),c(\zeta), \eta} (-1)^\theta \prod_{\alpha\leq\beta} \langle\alpha_{i_{\alpha}},  \alpha_{i_{\beta}}\rangle^{a(\alpha,\beta)} \prod_{\gamma} \langle \lambda+\mu,\alpha_{i_\gamma}\rangle^{b(\gamma)}\prod_{\zeta}\langle\alpha_{i_\zeta},H\rangle^{c(\zeta)}\langle\lambda+\mu,H\rangle^{\eta}. \numberthis \label{deltalambda+mu}
\end{align*}
First note that for $\lambda=n_1w_1+\cdots+n_rw_r$, $|n_i|\lesssim N$, $i=1,\cdots, r$, we have 
\begin{equation}\label{firstbound}
1\lesssim |\langle\alpha_{i},\alpha_{j}\rangle|\lesssim 1, \ \  |\langle\lambda+\mu,\alpha_{i}\rangle|\lesssim N,
\end{equation}
and by the assumption $|H|\lesssim N^{-1}$, 
\begin{equation}\label{secondbound}
|\langle\alpha_{i},H\rangle|\lesssim N^{-1}, \ \ |\langle\lambda+\mu,H\rangle|
=\left|\left(\sum_{i=1}^rn_i\langle w_i,H\rangle\right)+\langle \mu,H\rangle\right|\lesssim 1.
\end{equation}
These imply 
\begin{align}\label{delta(lambda+mu)^m}
|\delta((\lambda+\mu)^m)(H)|
\leq 
\sum_{\theta,a(\alpha,\beta),b(\gamma),c(\zeta),\eta}C^{\sum_{\alpha,\beta}|a(\alpha,\beta)|+\sum_{\gamma}b(\gamma)+\sum_{\zeta}c(\zeta)+\eta}N^{\sum_{\gamma}c(\gamma)-\sum_{\zeta}c(\zeta)}
\end{align}
for some constant $C$ independent of $m$. 
Now we derive a similar estimate for $D_i\left(\delta\left[(\lambda+\mu)^m\right]\right)(H)$. By \eqref{deltalambda+mu}, 
\begin{align*}
D_i\left(\delta\left[(\lambda+\mu)^m\right]\right)(H)&=\sum_{\theta, a(\alpha,\beta),b(\gamma),c(\zeta), \eta} (-1)^\theta \prod_{\alpha\leq\beta} \langle\alpha_{i_{\alpha}},  \alpha_{i_{\beta}}\rangle^{a(\alpha,\beta)}\prod_{\zeta}\langle\alpha_{i_\zeta},H\rangle^{c(\zeta)}\\
&\cdot D_i\left(\prod_{\gamma} \langle \lambda+\mu ,\alpha_{i_\gamma}\rangle^{b(\gamma)}\langle\lambda+\mu,H\rangle^{\eta}\right).\numberthis\label{Didelta}
\end{align*}
For $\lambda=n_1w_1+\cdots+n_rw_r$,  we compute 
\begin{align*}
D_i\left(\langle\lambda+\mu,\alpha_{i_\gamma}\rangle\right)&=\langle\alpha_i,\alpha_{i_\gamma}\rangle,\\
D_i\left(\langle\lambda+\mu,H\rangle\right)
&=\langle\alpha_i, H\rangle. 
\end{align*}
The above two formulas combined with \eqref{firstbound}, \eqref{secondbound}, and the Leibniz rule \eqref{Leibniz} for $D_i$ imply  
\begin{align*}
\left|D_i\left(\prod_{\gamma} \langle \lambda+\mu ,\alpha_{i_\gamma}\rangle^{b(\gamma)}\langle\lambda+\mu,H\rangle^{\eta}\right)\right|
\leq C^{\sum_{\gamma}b(\gamma)+\eta} N^{\sum_{\gamma}b(\gamma)-1}. 
\end{align*}
This combined with \eqref{firstbound}, \eqref{secondbound} and \eqref{Didelta} implies 
\begin{align*}
\left|D_i\left(\delta\left[(\lambda+\mu)^m\right]\right)(H)\right|&\lesssim
\sum_{\theta,a(\alpha,\beta),b(\gamma),c(\zeta),\eta}C^{\sum_{\alpha,\beta}|a(\alpha,\beta)|+\sum_{\gamma}b(\gamma)+\sum_{\zeta}c(\zeta)+\eta}N^{\sum_{\gamma}b(\gamma)-\sum_{\zeta}c(\zeta)-1}.
\end{align*}
Inductively, we have 
\begin{align*}
\left|D_{i_1}\cdots D_{i_k}\left(\delta\left[(\lambda+\mu)^m\right]\right)(H)\right|&\lesssim
\sum_{\theta,a(\alpha,\beta),b(\gamma),c(\zeta),\eta}C^{\sum_{\alpha,\beta}|a(\alpha,\beta)|+\sum_{\gamma}b(\gamma)+\sum_{\zeta}c(\zeta)+\eta}N^{\sum_{\gamma}b(\gamma)-\sum_{\zeta}c(\zeta)-k},
\end{align*}
for some constant $C$ independent of $m$. This by Proposition \ref{BGGMon2} then implies  
\begin{align*}
\left|D_{i_1}\cdots D_{i_k}\left(\delta\left[(\lambda+\mu)^m\right]\right)(H)\right|\leq 3^{mL}C^{CmL}N^{L-k}\leq C^{m}N^{L-k}
\end{align*}
for some positive constant $C$ independent of $m$. This estimate implies \eqref{Differencing}, 
noting that 
\begin{equation}\label{Cmm!}
\sum_{m=0}^\infty\frac{C^{m}}{m!}\lesssim 1. 
\end{equation}
This finishes the proof. 
\end{proof}

\subsubsection{Approach 2: via Harish-Chandra's Integral Formula}

This very short approach expresses $\chi^\mu_1(\lambda, H)$ as an integral over the group $G$. 
We apply the Harish-Chandra's integral formula (see \cite{HC57}), which reads 
\begin{align*}
\sum_{s\in W} \det(s) e^{\langle s\lambda, \mu\rangle}
=\frac{\prod_{\alpha\in P}\langle \alpha, \lambda\rangle\cdot \prod_{\alpha\in P}\langle \alpha, \mu\rangle}{\prod_{\alpha\in P}\langle \alpha, \rho\rangle}\int_{G}e^{\langle \text{Ad}_g (\lambda), \mu\rangle}\ dg. 
\end{align*}
where $\lambda, \mu\in\mathfrak{b}_{\mathbb{C}}^*$, and $dg$ is the normalized Haar measure on $G$. Then we can rewrite $\chi^\mu_1(\lambda, H)$ as 
\begin{align*}
\chi^\mu_1(\lambda, H)=\frac{i^{|P|}\prod_{\alpha\in P}\langle \alpha, \lambda+\rho\rangle}{\prod_{\alpha\in P}\langle \alpha, \rho\rangle} \int_G e^{i\langle\lambda+\rho, \text{Ad}_g(H)\rangle}\ dg. 
\end{align*}
Note that 
$$\frac{i^{|P|}\prod_{\alpha\in P}\langle \alpha, \lambda+\rho\rangle}{\prod_{\alpha\in P}\langle \alpha, \rho\rangle}$$ 
is a polynomial in $\lambda\in\Lambda$ of degree $|P|=\frac{d-r}{2}$. Also, as $|H|\lesssim N^{-1}$, we have $|Ad_g(H)|\lesssim N^{-1}$ uniformly in $g\in G$, which implies that the integral 
$$f(\lambda)=\int_G e^{i\langle\lambda+\rho, \text{Ad}_g(H)\rangle}\ dg$$ 
as a function in $\lambda$ is a pseudo-polynomial of degree $0$, uniformly in $|H|\lesssim N^{-1}$. Then by the Leibniz rule, $\chi'(\lambda, H)$ as a function of $\lambda$ is a pseudo-polynomial of degree $\frac{d-r}{2}$, uniformly in $|H|\lesssim N^{-1}$. This finishes the proof of Lemma \ref{EstBGG1}.

\begin{rem}\label{Rootswithoutgroup}
Note that Lemma \ref{EstBGG1} can be stated purely in terms of an integral root system without mentioning the ambient compact Lie group, and it still holds true this way. It can be seen either by the approach via BGG-Demazure operators which is purely a root system theoretic argument, or by the fact that, 
for any integral root system $\Phi$, there associates to it a unique compact simply connected semisimple Lie group equipped with this root system, thus the approach via Harish-Chandra's integral formula still works, even though the argument explicitly involves the group.  
\end{rem}

\subsection{From the Weight Lattice to the Root Lattice}
\label{From the Weight Lattice to the Root Lattice}
We say $\exp H$ is a \textit{corner} in the maximal torus provided 
\begin{align*}
\|\frac{1}{2\pi}\langle\alpha,H\rangle\|=0, \ \ \text{for all }\alpha\in P. 
\end{align*} 
In this section, we extend Corollary \ref{neighborhoodofidentity} 
to the scenarios when $\exp H$ is within a distance of $\lesssim N^{-1}$ from some corner. That is, when  
\begin{align}\label{NearChamber}
\|\frac{1}{2\pi}\langle\alpha,H\rangle\|\lesssim N^{-1}, \ \ \text{for all }\alpha\in P. 
\end{align}
To this end, we rewrite the Schr\"odinger kernel $K_N(t,x)$ as a finite sum of exponential sums over the root lattice: 
\begin{align*}
K_{N}(t,x)&=C\sum_{\mu\in \Lambda/\Gamma}\sum_{\lambda\in \mu+\Gamma}e^{-it(|\lambda|^2-|\rho|^2)}
\varphi(\frac{|\lambda|^2-|\rho|^2}{N^2})\frac{\prod_{\alpha\in P}\langle \alpha,\lambda\rangle}{\prod_{\alpha\in P}\langle \alpha,\rho\rangle}
\frac{\sum_{s\in W}\det{(s)}e^{i\langle s(\lambda),H\rangle}}{\sum_{s\in W}\det{(s)}e^{i\langle s(\rho), H\rangle}}\\
&=C\sum_{\mu\in \Lambda/\Gamma}\sum_{\lambda\in\Gamma}e^{-it(|\lambda+\mu|^2-|\rho|^2)}
\varphi(\frac{|\lambda+\mu|^2-|\rho|^2}{N^2})\frac{\prod_{\alpha\in P}\langle \alpha,\lambda+\mu\rangle}{\prod_{\alpha\in P}\langle \alpha,\rho\rangle}
\frac{\sum_{s\in W}\det{(s)}e^{i\langle s(\lambda+\mu),H\rangle}}{\sum_{s\in W}\det{(s)}e^{i\langle s(\rho), H\rangle}} \numberthis \label{WeightToRoot}
\end{align*}
where $C=\frac{e^{it|\rho|^2}}{|W|}$.

\begin{prop}\label{EstRoot}
Let $\mu$ be an element in the weight lattice $\Lambda$ and let 
\begin{equation}\label{KerRoot}
K^\mu_N(t,x)=\sum_{\lambda\in\Gamma}e^{-it(|\lambda+\mu|^2-|\rho|^2)}
\varphi(\frac{|\lambda+\mu|^2-|\rho|^2}{N^2})\frac{\prod_{\alpha\in P}\langle \alpha,\lambda+\mu\rangle}{\prod_{\alpha\in P}\langle \alpha,\rho\rangle}
\frac{\sum_{s\in W}\det{(s)}e^{i\langle s(\lambda+\mu),H\rangle}}{\sum_{s\in W}\det{(s)}e^{i\langle s(\rho), H\rangle}} 
\end{equation}
where $x$ is conjugate to $\exp H$. Then
\begin{align}\label{KNmutx}
|K^\mu_N(t,x)|\lesssim \frac{N^d}{(\sqrt{q}(1+N\|\frac{t}{2\pi D}-\frac{a}{q}\|^{1/2}))^{r}}
\end{align}
for $\frac{t}{2\pi D}\in\mathcal{M}_{a,q}$, uniformly for $\|\frac{1}{2\pi}\langle\alpha,H\rangle\|\lesssim N^{-1}$  for all $\alpha\in P$. 
\end{prop}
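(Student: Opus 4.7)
The plan is to reduce to the case $|H|\lesssim N^{-1}$ via the action of the center of $G$, then Taylor-expand the numerator and denominator of the Weyl character formula in $H$ and extract a common factor of $d_{\det}(H)$ using the BGG--Demazure machinery of Section~\ref{BGG-Demazure Operators and Invariant Theory}, so that each Taylor term becomes a polynomial exponential sum estimable by the Weyl differencing Lemma~\ref{WeylSum}.

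\textbf{Reduction.} The hypothesis $\|\tfrac{1}{2\pi}\langle\alpha,H\rangle\|\lesssim N^{-1}$ for every $\alpha\in P$ places $H$ within distance $O(1/N)$ of some $H_{0}\in i\mathfrak{b}^{*}$ satisfying $\langle\alpha,H_{0}\rangle\in 2\pi\mathbb{Z}$ for every $\alpha\in\Phi$, so $z_{0}:=\exp H_{0}$ is central in $G$. For any $\nu\in\Lambda$ and any $s\in W$ one has $s\nu-\nu\in\Gamma$, whence $e^{i\langle s\nu,H_{0}\rangle}=e^{i\langle\nu,H_{0}\rangle}$; pulling this constant out of both sums in \eqref{WeylCharacter} yields
\begin{equation*}
\chi_{\lambda+\mu}(\exp H)=e^{i\langle\lambda+\mu-\rho,H_{0}\rangle}\chi_{\lambda+\mu}(\exp H'),\qquad H=H_{0}+H',\ |H'|\lesssim N^{-1}.
\end{equation*}
Since $\langle\lambda,H_{0}\rangle\in 2\pi\mathbb{Z}$ for $\lambda\in\Gamma$, the phase is constant in $\lambda$, so $|K_{N}^{\mu}(t,x)|=|K_{N}^{\mu}(t,\exp H')|$ and we may assume $|H|\lesssim N^{-1}$ throughout.

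\textbf{Expansion.} Taylor expand
\begin{equation*}
\sum_{s\in W}\det(s)\,e^{i\langle s(\lambda+\mu),H\rangle}=\sum_{m\geq 0}\frac{i^{m}}{m!}P_{m}(\lambda+\mu,H),\quad P_{m}(\nu,H):=\sum_{s\in W}\det(s)\langle s\nu,H\rangle^{m}.
\end{equation*}
Each $P_{m}$ is anti-invariant under $W$ in both $\nu$ and $H$ and is bihomogeneous of bidegree $(m,m)$, so $P_{m}\equiv 0$ for $m<|P|$; for $m\geq|P|$, Proposition~\ref{Anti-invariant by invariant} applied separately in $\nu$ and $H$ produces $P_{m}(\nu,H)=d_{\det}(\nu)\,d_{\det}(H)\,q_{m}(\nu,H)$ with $q_{m}$ doubly $W$-invariant and bihomogeneous of bidegree $(m-|P|,m-|P|)$, whose coefficients are $O(C^{m})$ for some $C=C(G)$ by Proposition~\ref{BGGMon2}. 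The same identity at $\nu=\rho$ factors $D_{P}(H)=d_{\det}(\rho)d_{\det}(H)B(H)$, and matching the leading behavior $D_{P}(H)\sim i^{|P|}d_{\det}(H)$ at $H=0$ forces $B(0)=i^{|P|}/d_{\det}(\rho)\neq 0$, so $|B(H)|\gtrsim 1$ uniformly for $|H|\lesssim N^{-1}$. Cancelling the common $d_{\det}(H)$ turns \eqref{KerRoot} into
\begin{equation*}
K_{N}^{\mu}(t,x)=\frac{e^{it|\rho|^{2}}}{d_{\det}(\rho)^{2}B(H)}\sum_{m\geq|P|}\frac{i^{m}}{m!}\sum_{\lambda\in\Gamma}e^{-it|\lambda+\mu|^{2}}\varphi\!\left(\tfrac{|\lambda+\mu|^{2}-|\rho|^{2}}{N^{2}}\right)d_{\det}(\lambda+\mu)^{2}\,q_{m}(\lambda+\mu,H).
\end{equation*}

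\textbf{Weyl differencing.} For each $m$ write $q_{m}(\nu,H)=\sum_{|\beta|=m-|P|}c_{\beta}^{m}(\nu)\,H^{\beta}$ with $c_{\beta}^{m}$ homogeneous of degree $m-|P|$ in $\nu$ and coefficients of size $O(C^{m})$. Then $d_{\det}(\lambda+\mu)^{2}c_{\beta}^{m}(\lambda+\mu)$ is a polynomial of degree $m+|P|$ in $\lambda+\mu$ satisfying \eqref{DecCon} with $A=m+|P|$ and implicit constant $\lesssim C^{m}$, while $|H^{\beta}|\lesssim N^{-(m-|P|)}$. Applying Lemma~\ref{WeylSum} to the root lattice $\Gamma$ together with Remark~\ref{WeylSumRemark} (for the shift $\lambda_{0}=\mu$ and constant $-|\rho|^{2}$) gives
\begin{equation*}
|H^{\beta}|\cdot\Big|\sum_{\lambda\in\Gamma}e^{-it|\lambda+\mu|^{2}}\varphi(\cdots)d_{\det}(\lambda+\mu)^{2}c_{\beta}^{m}(\lambda+\mu)\Big|\lesssim\frac{C^{m}N^{r+|P|+m}\cdot N^{-(m-|P|)}}{(\sqrt{q}(1+N\|\tfrac{t}{2\pi D}-\tfrac{a}{q}\|^{1/2}))^{r}}=\frac{C^{m}N^{d}}{(\sqrt{q}(1+N\|\tfrac{t}{2\pi D}-\tfrac{a}{q}\|^{1/2}))^{r}},
\end{equation*}
using $2|P|+r=d$. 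Summing over the $O(m^{r})$ multi-indices $\beta$ and then over $m\geq|P|$, the Taylor denominator $m!$ dominates $C^{m}m^{r}$ and delivers the bound \eqref{KNmutx}.

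\textbf{Main obstacle.} The delicate step is this last bookkeeping: a naive Weyl differencing of a polynomial of degree $m+|P|$ costs $N^{r+|P|+m}$, and only the pairing $|H^{\beta}|\sim N^{-(m-|P|)}$ restores the balance to exactly $N^{d}$; the $C^{m}$ growth of coefficients from Proposition~\ref{BGGMon2} must then be absorbed by $m!$, which requires that the implicit constant in Lemma~\ref{WeylSum} depends only polynomially on the coefficient bound in \eqref{DecCon}. A secondary subtlety is the non-vanishing lower bound on $B(H)$, which follows from the explicit leading asymptotics of $D_{P}(H)$ at $H=0$ but must be checked against higher-order corrections when $|H|\sim N^{-1}$.
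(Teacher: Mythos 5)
Your argument is correct and is essentially the paper's own proof: the paper performs the same reduction $H=H_1+H_2$ with $\langle\alpha,H_2\rangle\in2\pi\mathbb{Z}$, the same Taylor expansion of the Weyl numerator with the factor $d_{\det}(H)$ extracted by the invariant theory of Propositions \ref{Anti-invariant by invariant}, \ref{BGGAnti}, \ref{BGGMon2} and cancelled against the Weyl denominator, and the same Weyl differencing over the root lattice via Lemma \ref{WeylSum}, the only organizational difference being that the paper packages the sum over $m$ into a differencing estimate for the character itself (Lemma \ref{EstBGG}) and then invokes Lemma \ref{WeylSum} a single time, rather than termwise in $(m,\beta)$ as you do. Your two flagged points are indeed fine: the proof of Lemma \ref{WeylSum} yields a bound depending linearly on the constant in \eqref{DecCon}, so the $C^{m}/m!$ summation goes through, and the lower bound $|B(H)|\gtrsim 1$ for $|H|\lesssim N^{-1}$ is immediate from $D_P(H)=\prod_{\alpha\in P}2i\sin(\tfrac{1}{2}\langle\alpha,H\rangle)$.
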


Using \eqref{WeightToRoot} and the finiteness of $\Lambda/\Gamma$, we have the following corollary. 

\begin{cor}\label{CloseTo0}
Inequality \eqref{KeyEst} holds for the case when $\|\frac{1}{2\pi}\langle\alpha,H\rangle\|\lesssim N^{-1}$ for all $\alpha\in P$.
\end{cor}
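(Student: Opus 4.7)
The plan is to prove \eqref{KNmutx} by (i) reducing to $|H|\lesssim N^{-1}$, (ii) using the BGG--Demazure operator $\delta$ to factor the Weyl denominator $d_{\det}(H)$ out of both sides of the character ratio, and (iii) applying Lemma \ref{WeylSum} to the resulting polynomial-amplitude sum.

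For step (i), the hypothesis produces a decomposition $H=H_0+H_1$ with $H_0\in 2\pi\Lambda^\vee$ and $|H_1|\lesssim N^{-1}$, because the simple roots form a basis of $i\mathfrak{b}^*$ and pair integrally with $2\pi\Lambda^\vee$. For every $\nu\in\Lambda$ and $s\in W$, the identity $s_\alpha\nu-\nu=-\langle\nu,\alpha^\vee\rangle\alpha\in\Gamma$ iterates to $s\nu-\nu\in\Gamma$, and since $\langle\Gamma,\Lambda^\vee\rangle\subset\mathbb{Z}$ we obtain $e^{i\langle s\nu,H_0\rangle}=e^{i\langle\nu,H_0\rangle}$; in particular $e^{i\langle s\lambda,H_0\rangle}=1$ for $\lambda\in\Gamma$. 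Therefore
\[
\sum_{s\in W}\det(s)e^{i\langle s(\lambda+\mu),H\rangle}=e^{i\langle\mu,H_0\rangle}\sum_{s\in W}\det(s)e^{i\langle s(\lambda+\mu),H_1\rangle},\qquad D_P(H)=e^{i\langle\rho,H_0\rangle}D_P(H_1),
\]
so $K_N^\mu(t,\exp H)=e^{i\langle\mu-\rho,H_0\rangle}K_N^\mu(t,\exp H_1)$ and we may assume $|H|\lesssim N^{-1}$.

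For (ii), the identity $\mathrm{Alt}(f):=\sum_s\det(s)(sf)=d_{\det}(y)\delta(f)(y)$ (a consequence of Proposition \ref{BGGAnti}) extends by the termwise power series to $\sum_{s\in W}\det(s)e^{i\langle sy,H\rangle}=d_{\det}(y)\delta(e^{i\langle\cdot,H\rangle})(y)$. Since the left-hand side equals $\sum_s\det(s)e^{i\langle y,sH\rangle}$ and is also $W$-anti-invariant in $H$, the smooth Chevalley theorem (every smooth $W$-anti-invariant function factors as $d_{\det}$ times a $W$-invariant smooth function) provides a jointly $W$-invariant smooth $R(y,H)$ with $\delta(e^{i\langle\cdot,H\rangle})(y)=d_{\det}(H)R(y,H)$. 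Applying this at $y=\lambda+\mu$ and $y=\rho$ cancels the common $d_{\det}(H)$, so
\[
K_N^\mu(t,x)=\frac{e^{i\langle\mu-\rho,H_0\rangle}e^{it|\rho|^2}}{d_{\det}(\rho)^2R(\rho,H)}\sum_{\lambda\in\Gamma}e^{-it|\lambda+\mu|^2}\varphi\left(\frac{|\lambda+\mu|^2-|\rho|^2}{N^2}\right)d_{\det}(\lambda+\mu)^2R(\lambda+\mu,H).
\]

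For (iii), the scaling symmetry $R(y/\tau,\tau H)=R(y,H)$ for every $\tau>0$ follows from $\sum_s\det(s)e^{i\langle sy,H\rangle}$ being invariant under $(y,H)\mapsto(y/\tau,\tau H)$ together with $d_{\det}(y/\tau)d_{\det}(\tau H)=d_{\det}(y)d_{\det}(H)$. Setting $\tau=|H|^{-1}$ sends $(y,H)$ to $(|H|y,H/|H|)$, a point in a fixed compact set whenever $|y|\lesssim N$ and $|H|\lesssim N^{-1}$; smoothness of $R$ on this compact set yields $|\partial_y^n R(y,H)|\lesssim |H|^n\lesssim N^{-n}$, bounds that pass to discrete differences on $\Gamma$. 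Combining with $|D_{i_1}\cdots D_{i_n}d_{\det}(\lambda+\mu)^2|\lesssim N^{2|P|-n}$ by Leibniz, the amplitude $f(\lambda):=d_{\det}(\lambda+\mu)^2R(\lambda+\mu,H)$ satisfies $|D_{i_1}\cdots D_{i_n}f(\lambda)|\lesssim N^{2|P|-n}$, i.e., the hypothesis of Lemma \ref{WeylSum} with $A=2|P|$. Remark \ref{WeylSumRemark} (with $\lambda_0=\mu$, shift $-|\rho|^2$) then bounds the inner sum by $N^{r+2|P|}/(\sqrt{q}(1+N\|\frac{t}{2\pi D}-\frac{a}{q}\|^{1/2}))^r=N^d/(\cdots)^r$, since $d=2|P|+r$. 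The prefactor is $O(1)$: Proposition \ref{BGGMon2} at $m=|P|$ forces $\delta(\langle\cdot,H\rangle^{|P|})(y)=C'd_{\det}(H)$ for a nonzero constant $C'$ (the unique-up-to-scalar bi-anti-invariant of minimal bidegree), hence $R(\rho,0)=\frac{i^{|P|}}{|P|!}C'\ne 0$ and $|R(\rho,H)|$ stays bounded away from zero for large $N$.

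The hardest step will be the divisibility $\delta(e^{i\langle\cdot,H\rangle})(y)=d_{\det}(H)R(y,H)$ with a smooth quotient that is nonzero at $y=\rho$: the individual terms of Proposition \ref{BGGMon2} are not separately divisible by $d_{\det}(H)$, so the factorization is global and rests on the joint $W$-anti-invariance combined with the smooth Chevalley theorem. Once this factorization is secure, the scaling symmetry of $R$ supplies the derivative estimates that make Weyl differencing land at the scaling-critical exponent $N^d$.
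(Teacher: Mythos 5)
Your argument is correct in substance and shares the paper's skeleton for Corollary \ref{CloseTo0}: decompose $\Lambda$ into $\Gamma$-cosets, split $H=H_0+H_1$ with $\langle\alpha_i,H_0\rangle\in2\pi\mathbb{Z}$ and $|H_1|\lesssim N^{-1}$ (using $s\nu-\nu\in\Gamma$ to pull the $H_0$-dependence out as a unimodular constant), cancel the wall singularity of the Weyl denominator against the numerator, verify the difference bounds \eqref{DecCon} with $A=2|P|$, and invoke Lemma \ref{WeylSum} with Remark \ref{WeylSumRemark}. Where you genuinely diverge is in the replacement of Lemma \ref{EstBGG}: the paper expands the numerator as a power series in $H_1$, applies Proposition \ref{BGGAnti} termwise in the $H_1$-variable, and controls the $\lambda$-differences of each $\delta f_m$ through the explicit combinatorial expansion of Proposition \ref{BGGMon2}, summing the $C^m/m!$ tails; you instead factor the alternating sum globally as $A(y,H)=d_{\det}(y)\,d_{\det}(H)\,R(y,H)$ with $R$ analytic and bi-invariant, and obtain $|\partial_y^nR|\lesssim|H|^n\lesssim N^{-n}$ from the scaling symmetry $R(y/\tau,\tau H)=R(y,H)$ plus compactness, which is cleaner and avoids all the bookkeeping. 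What this buys is a conceptual proof of the difference estimates; what it costs is reliance on facts not established in the paper: the identity $\sum_s\det(s)\,s f=d_{\det}\cdot\delta f$ on \emph{all} polynomials is not literally Proposition \ref{BGGAnti} (it needs $\delta\circ s=\det(s)\,\delta$, though it is classical), and the ``smooth Chevalley'' division theorem is heavier than necessary — since $A(y,H)$ is entire and anti-invariant in each variable separately, it vanishes on each hyperplane $\langle\alpha,y\rangle=0$, $\langle\alpha,H\rangle=0$, and successive division by these pairwise non-proportional linear forms already yields an entire, bi-invariant $R$, so you can bypass $\delta$ entirely in step (ii). Two small points should be shored up: your nonvanishing argument for $C'$ (uniqueness up to scalar of the minimal bi-anti-invariant does not by itself exclude the scalar being zero) is more directly obtained from the product formula $D_P(H_1)=e^{-i\langle\rho,H_1\rangle}\prod_{\alpha\in P}(e^{i\langle\alpha,H_1\rangle}-1)$, which gives $|D_P(H_1)|\gtrsim|d_{\det}(H_1)|$ for $|H_1|\lesssim N^{-1}$ and hence $|R(\rho,H_1)|\gtrsim1$ for $N$ large (small $N$ being trivial); and when $H_1$ lies exactly on a wall the cancelled formula for the character should be justified by continuity, exactly as is implicit in the paper's proof.
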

To prove Proposition \ref{EstRoot}, we first prove a variant of Lemma \ref{EstBGG1}. 
\begin{lem}\label{EstBGG2}
Let 
\begin{align}\label{EstBGG2character}
\chi^\mu(\lambda, H)=\frac{\sum_{s\in W}(\det s) e^{i \langle s(\mu+\lambda), H\rangle}}{e^{-i\langle\rho, H\rangle}\prod_{\alpha\in P}(e^{i\langle \alpha, H\rangle}-1)}
\end{align}
be defined as in Lemma \ref{EstBGG1}. Assume in addition that 
$\mu\in\Lambda$. 
Then $\chi^\mu(\lambda, H)$ as a function in $\lambda\in\Gamma$ is a pseudo-polynomial of degree $\frac{d-r}{2}$, uniformly in $H$ such that $\|\frac{1}{2\pi}\alpha(H)\|\lesssim N^{-1}$ for all $\alpha\in P$. 
\end{lem}
\begin{proof}
For all $H\in i\mathfrak{b}^*$ such that $\|\frac{1}{2\pi}\langle\alpha,H\rangle\|\lesssim N^{-1}$ for all $\alpha\in P$, by considering the dual basis of the simple roots $\{\alpha_1,\cdots, \alpha_r\}$, we can write 
\begin{align}\label{HH1H2}
H=H_1+H_2
\end{align}
such that 
\begin{align}\label{alphaH1}
|\frac{1}{2\pi}\langle\alpha_i, H_1\rangle|=\|\frac{1}{2\pi}\langle\alpha_i,H\rangle\|\lesssim N^{-1}, \ \ i=1,\cdots, r,
\end{align}
and 
\begin{equation}\label{alphaH2}
\langle\alpha_i,H_2\rangle\in 2\pi\mathbb{Z}, \ \ i=1,\cdots, r.
\end{equation}
This implies that $\exp H_2$ is a corner and 
\begin{align}\label{H1N-1}
|H_1|\lesssim N^{-1}. 
\end{align}
Then for $\lambda\in\Gamma=\mathbb{Z}\alpha_1+\cdots+\mathbb{Z}\alpha_r$, 
\begin{align}\label{chiH1+H3Pre}
\chi^\mu(\lambda, H)=\chi^\mu(\lambda, H_1+H_2)
&=\frac{\sum_{s\in W}\det{(s)}e^{i\langle s(\lambda+\mu),H_1\rangle}e^{i\langle s(\mu),H_2\rangle}}{e^{-i\langle\rho,H_1+H_2\rangle}\prod_{\alpha\in P}\left(e^{i\langle\alpha, H_1\rangle}-1\right)}.
\end{align}
Note that, see Corollary 4.13.3 in \cite{Var84},  
$s(\mu)-\mu\in\Gamma$ for all $\mu\in\Lambda$ and $s\in W$, which combined with \eqref{alphaH2} implies 
\begin{align*}
e^{i\langle s(\mu),H_2\rangle}
=e^{i\langle \mu,H_2\rangle}, \ \ \text{for all }\mu\in\Lambda, \ \ s\in W. 
\end{align*}
Then \eqref{chiH1+H3Pre} becomes 
\begin{align}\label{chiH1+H3}
\chi^\mu(\lambda, H)
&=\frac{e^{i\langle \mu,H_2\rangle}}{e^{-i\langle\rho, H_2\rangle}}\cdot \frac{\sum_{s\in W}\det{(s)}e^{i\langle s(\lambda+\mu),H_1\rangle}}{e^{-i\langle\rho,H_1\rangle}\prod_{\alpha\in P}\left(e^{i\langle\alpha, H_1\rangle}-1\right)}
=e^{i\langle \mu+\rho,H_2\rangle}\cdot\chi^\mu(\lambda, H_1), 
\end{align}
which is a pseudo-polynomial in $\lambda\in\Gamma$ of degree $\frac{d-r}{2}$ uniformly in $|H_1|\lesssim N^{-1}$ by Lemma \ref{EstBGG1}.
\end{proof}

\begin{proof}[Proof of Proposition \ref{EstRoot}]
Since $\prod_{\alpha\in P}\langle \alpha,\lambda+\mu\rangle$ is a polynomial, and  thus also a pseudo-polynomial in $\lambda$ of degree $|P|=\frac{d-r}{2}$, and $\chi^\mu(\lambda, H)$ is a pseudo-polynomial of degree $\frac{d-r}{2}$ uniformly in $\|\frac{1}{2\pi}\langle\alpha, H\rangle\|\lesssim N^{-1}$ for all $\alpha\in P$ by the previous lemma, 
\begin{align*}
f(\lambda)=\frac{\prod_{\alpha\in P}\langle \alpha,\lambda+\mu\rangle}{\prod_{\alpha\in P}\langle \alpha,\rho\rangle}\cdot 
\chi^\mu(\lambda, H)
\end{align*} 
is then a pseudo-polynomial of degree $d-r$ uniformly in $\|\frac{1}{2\pi}\langle\alpha, H\rangle\|\lesssim N^{-1}$ for all $\alpha\in P$.  Then the desired result comes from a direct application of Lemma \ref{WeylSum}. 
\end{proof}

\begin{exmp}\label{SU(2)}
We specialize the discussion in this section to the case $G=\text{SU}(2)$. Recall that $\Lambda=\mathbb{Z}w$, $\Gamma=\mathbb{Z}\alpha$ with $\alpha=2w$, thus $\Lambda/\Gamma\cong \{0, 1\}\cdot w$. \eqref{WeightToRoot} specializes to 
\begin{align*}
K_N(t,\theta)=\frac{e^{it}}{2}\left(K_N^0(t,\theta)+K_N^1(t,\theta)\right),
\end{align*}
where 
\begin{align*}
K_N^0=\sum_{\substack{m=2k,\\k\in\mathbb{Z}}}e^{-itm^2}
\varphi(\frac{m^2-1}{N^2})m\cdot\frac{e^{im\theta}-e^{-im\theta}}{e^{i\theta}-e^{-i\theta}},
\end{align*}
\begin{align*}
K_N^1=\sum_{\substack{m=2k+1,\\k\in\mathbb{Z}}}e^{-itm^2}
\varphi(\frac{m^2-1}{N^2})m\cdot\frac{e^{im\theta}-e^{-im\theta}}{e^{i\theta}-e^{-i\theta}},
\end{align*}
for $\theta\in\mathbb{R}/2\pi\mathbb{Z}$. Condition \eqref{NearChamber} specializes to $\|\frac{\theta}{\pi}\|\lesssim N^{-1}$. Write 
$\theta=\theta_1+\theta_2$, where $|\theta_1|\lesssim N^{-1}$, and $\theta_2=0, \pi$.  Then for $m=2k$, $k\in\mathbb{Z}$, 
\begin{align*}
\chi_{m}(\theta)&=\frac{1}{e^{-i\theta}(e^{i2\theta_1}-1)}\cdot (e^{im\theta_1}-e^{-im\theta_1})\\
&=\frac{1}{e^{-i\theta}(e^{i2\theta_1}-1)}\cdot \sum_{n=0}^\infty\frac{i^n}{n!}\left((m\theta_1)^n-(-m\theta_1)^n\right)\\
&=\frac{\theta_1}{e^{-i\theta}(e^{i2\theta_1}-1)}\cdot \sum_{n\text{ odd}}\frac{i^n}{n!}(2\theta_1^{n-1}m^n), \numberthis
\end{align*}
and similarly for $m=2k+1$, $k\in\mathbb{Z}$, 
\begin{align*}
\chi_{m}(\theta)=\frac{e^{i\theta_2}\theta_1}{e^{-i\theta}(e^{i2\theta_1}-1)}\cdot \sum_{n\text{ odd}}\frac{i^n}{n!}(2\theta_1^{n-1}m^n).
\end{align*}
Note that we are implicitly applying Proposition \ref{BGGAnti} so that
\begin{align*}
f_n(\theta_1):=(m\theta_1)^n-(-m\theta_1)^n
=\theta_1\cdot \delta f_n=\left\{\begin{array}{ll}\theta_1\cdot 2\theta_1^{n-1}m^n, & n\text{ odd},\\0, & n\text{ even}.\end{array}\right.
\end{align*}
If $|k|\lesssim N$, then it is clear that 
\begin{align*}
|D^L\chi_{2k}|\lesssim N^{1-L}, \ \ |D^L\chi_{2k+1}|\lesssim N^{1-L}, \ \ L\in\mathbb{Z}_{\geq 0},
\end{align*}
where $D$ is the difference operator with respect to the variable $k$. These two inequalities will give the desired estimates for $K_N^0$ and $K_N^1$ respectively using the Weyl sum estimate Lemma  \ref{WeylSum} in 1 dimension. 
\end{exmp}

\subsection{Root Subsystems}
\label{RootSubsystems}
To finish the proof of part (ii) of Theorem \ref{MainEstimate},  considering Corollaries \ref{First} and \ref{CloseTo0}, it suffices to prove \eqref{KeyEst} in the scenarios when $\exp H$ is away from all the corners by a distance of $\gtrsim N^{-1}$ but stays close to some cell walls within a distance of $\lesssim N^{-1}$. 
We will identify these other walls as belonging to a \textit{root subsystem} of the original root system $\Phi$, and then we will decompose the character, the weight lattice and thus the Schr\"{o}dinger kernel according to this root subsystem. 

\subsubsection{Identifying Root Subsystems and Rewriting the Character}
Given any $H\in i \mathfrak{b}^*$, let $Q_H$ be the subset of the set $\Phi$ of  roots defined by 
\begin{equation*}
Q_H:=\{\alpha\in  \Phi \mid \|\frac{1}{2\pi}\langle\alpha, H\rangle\|\leq N^{-1}\}.
\end{equation*}
Thus 
\begin{equation*}
\Phi\setminus Q_H=\{\alpha\in  \Phi \mid \|\frac{1}{2\pi}\langle\alpha, H\rangle\|> N^{-1}\}.
\end{equation*}
Define 
\begin{equation}\label{PhiH}
\Phi_H:=\{\alpha\in \Phi\mid \alpha\text{ lies in the $\mathbb{Z}$-linear span of }Q_H\},
\end{equation}
then $\Phi_H\supset Q_H$, and 
\begin{equation}\label{alphaPhiH}
\|\frac{1}{2\pi}\langle\alpha, H\rangle\|\lesssim N^{-1}, \ \ \forall\alpha\in \Phi_H,
\end{equation}
with the implicit constant independent of $H$, and 
\begin{equation}\label{alphaNoPhiH}
\|\frac{1}{2\pi}\langle\alpha, H\rangle\|> N^{-1}, \ \ \forall \alpha\in \Phi\setminus \Phi_H.
\end{equation}
Note that $\Phi_H$ is $\mathbbmsl{Z}$\textit{-closed} in $\Phi$, that is, no element in $\Phi\setminus \Phi_H$ lies in the $\mathbb{Z}$-linear span of $\Phi_H$.

\begin{prop}
$\Phi_H$ is an integral root system. 
\end{prop}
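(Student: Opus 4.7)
The plan is to verify the four root system axioms \eqref{RootSystem} for $\Phi_H$ viewed as a finite subset of the same ambient inner product space $i\mathfrak{b}^*$ (inheriting the Cartan--Killing inner product), using at every step that $\Phi_H \subset \Phi$ together with the fact that $\Phi$ is itself an integral root system. Axioms (ii) and (iv) are essentially free: if $\alpha \in \Phi_H$ and $k\alpha \in \Phi_H$ with $k \in \mathbb{R}$, then both $\alpha, k\alpha$ lie in $\Phi$, so axiom (ii) for $\Phi$ forces $k = \pm 1$; and axiom (iv) for $\Phi_H$ is a restriction of the same relation on $\Phi$.

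For axiom (i), I would note that if $\alpha \in \Phi_H$, then $\alpha \in \Phi$ and $\alpha$ lies in the $\mathbb{Z}$-span of $Q_H$; hence $-\alpha = (-1)\cdot \alpha$ also lies in the $\mathbb{Z}$-span of $Q_H$, and $-\alpha \in \Phi$ by axiom (i) for $\Phi$, so $-\alpha \in \Phi_H$. The only step with any real content is axiom (iii), and this is where I would concentrate the argument. Given $\alpha, \beta \in \Phi_H$, I would write
\begin{equation*}
s_\alpha(\beta) \;=\; \beta - \frac{2\langle\beta,\alpha\rangle}{\langle\alpha,\alpha\rangle}\,\alpha,
\end{equation*}
observe that the coefficient $2\langle\beta,\alpha\rangle/\langle\alpha,\alpha\rangle$ is an integer by axiom (iv) applied to $\Phi$, and conclude that $s_\alpha(\beta)$ is a $\mathbb{Z}$-linear combination of $\alpha$ and $\beta$. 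Since each of $\alpha,\beta$ is a $\mathbb{Z}$-linear combination of elements of $Q_H$, so is $s_\alpha(\beta)$. On the other hand $s_\alpha(\beta) \in \Phi$ by axiom (iii) for $\Phi$, so by the very definition \eqref{PhiH}, $s_\alpha(\beta) \in \Phi_H$. Thus $s_\alpha \Phi_H \subset \Phi_H$, and applying the same argument to $s_\alpha^{-1}=s_\alpha$ gives the opposite inclusion.

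The ``main obstacle'' is really just the observation that closure of $\Phi_H$ under reflections uses two distinct ingredients from $\Phi$ simultaneously: the integrality of the Cartan integers (to ensure the reflected vector still lies in the $\mathbb{Z}$-span of $Q_H$) and the reflection-invariance of $\Phi$ itself (to ensure the reflected vector still lies in $\Phi$). Once this double use is identified the verification is routine. I would also briefly remark that $\Phi_H$ is nonempty whenever $Q_H \neq \emptyset$ (so that the notion of ``root system'' is nondegenerate in the relevant cases), and that one may if desired regard the ambient space as the $\mathbb{R}$-span of $\Phi_H$ inside $i\mathfrak{b}^*$ equipped with the restricted Cartan--Killing inner product, which does not affect any of the four axioms.
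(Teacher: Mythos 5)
Your proof is correct and follows essentially the same route as the paper: axioms (ii) and (iv) are inherited immediately from $\Phi$, axiom (i) follows because the $\mathbb{Z}$-span of $Q_H$ is closed under negation, and axiom (iii) follows because $s_\alpha(\beta)$ is a $\mathbb{Z}$-linear combination of $\alpha$ and $\beta$ (via the integrality of the Cartan integers) and lies in $\Phi$ by reflection-invariance of $\Phi$, hence in $\Phi_H$ by definition \eqref{PhiH}. Your write-up simply makes explicit the double use of axioms (iii) and (iv) of $\Phi$ that the paper's proof leaves implicit.
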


\begin{proof}
We check the requirements for an integral root system listed in \eqref{RootSystem}. Parts (ii) and (iv) are automatic from the fact that $\Phi_H$ is a subset of $\Phi$. Part (i) comes from the fact that $\Phi_H$ is a $\mathbb{Z}$-linear space. Part (iii) follows from the fact that $s_\alpha\beta$ is a $\mathbb{Z}$-linear combination of $\alpha$ and $\beta$, for all $\alpha,\beta\in \Phi_H$, and the fact that $\Phi_H$ is a $\mathbb{Z}$-linear space.  
\end{proof}

Then we say that $\Phi_H$ is a \textit{root subsystem} of $\Phi$.

Let $W_H$ be the Weyl group of $\Phi_H$. $W_H$ is generated by reflections $s_\alpha$ for $\alpha\in\Phi_H$ and $W_H$ is a subgroup of the Weyl group $W$ of $\Phi$. Let $P$ be a positive system of roots of $\Phi$ and $P_H=P\cap \Phi_H$. Then $P_H$ is a positive system of roots of $\Phi_H$. We rewrite the Weyl character as  
\begin{align*}
\chi_{\lambda}
&=\frac{\sum_{s\in W}\det s \ e^{i\langle s(\lambda), H\rangle}}{e^{-i\langle\rho, H\rangle}\prod_{\alpha\in P}(e^{i\langle\alpha, H\rangle}-1)}\\
&=\frac{\frac{1}{|W_H|}\sum_{s_H\in W_H}\sum_{s\in W}\det (s_Hs) \ e^{i\langle (s_Hs)(\lambda), H\rangle}}{e^{-i\langle\rho, H\rangle}{\left(\prod_{\alpha\in P\setminus P_H}(e^{i\langle\alpha, H\rangle}-1)\right)} {\left(\prod_{\alpha\in P_H}(e^{i\langle\alpha, H\rangle}-1)\right) }}\\
&=\frac{1}{|W_H|e^{-i\langle\rho, H\rangle}\prod_{\alpha\in P\setminus P_H}(e^{i\langle\alpha, H\rangle}-1)}\sum_{s\in W}\det s \cdot \frac{\sum_{s_H\in W_H}\det s_H \ e^{i\langle s_H(s(\lambda)), H\rangle}}{\prod_{\alpha\in P_H}(e^{i\langle\alpha, H\rangle}-1)}\\
&=C(H)\sum_{s\in W}\det s \cdot \frac{\sum_{s_H\in W_H}\det s_H \ e^{i\langle s_H(s(\lambda)), H\rangle}}{\prod_{\alpha\in P_H}(e^{i\langle\alpha, H\rangle}-1)},\numberthis
\end{align*}
where 
\begin{align}
C(H):=\frac{1}{|W_H|e^{-i\langle\rho, H\rangle}\prod_{\alpha\in P\setminus P_H}(e^{i\langle\alpha, H\rangle}-1)}.
\end{align}
Then by \eqref{alphaNoPhiH}, 
\begin{equation}\label{C(H)}
|C(H)|\lesssim N^{|P\setminus P_H|}.
\end{equation}

Let $V_H$ be the $\mathbb{R}$-linear span of $\Phi_H$ in $V=i\mathfrak{b}^*$ and let $H^{\parallel}$ be the orthogonal projection of $H$ on $V_H$. Let $H^{\perp}=H-H^{\parallel}$. Then
$H^\perp$ is orthogonal to $V_H$ and we have  
\begin{align*}
\chi_{\lambda}&=C(H)\sum_{s\in W}\det s \cdot \frac{\sum_{s_H\in W_H}\det s_H \ e^{i\langle s_H(s(\lambda)), H^\perp+H^\parallel\rangle}}{\prod_{\alpha\in P_H}(e^{i\langle\alpha, H^\perp+H^\parallel\rangle}-1)}\\
&=C(H)\sum_{s\in W}\det s \cdot \frac{\sum_{s_H\in W_H}\det s_H \ e^{i\langle s(\lambda), s_H(H^\perp)\rangle}e^{i\langle s_H(s(\lambda)), H^\parallel\rangle}}{\prod_{\alpha\in P_H}(e^{i\langle\alpha, H^\parallel\rangle}-1)}.\numberthis
\end{align*}
Note that since $H^\perp$ is orthogonal to every root in $\Phi_H$, $H^\perp$ is fixed by $s_\alpha$ for any $\alpha\in\Phi_H$, which in turn implies that $H^\perp$ is fixed by any $s_H\in W_H$, that is, $s_H(H^\perp)=H^{\perp}$. Then  
\begin{align*}
\chi_\lambda&=C(H)\sum_{s\in W}\det s \cdot \frac{\sum_{s_H\in W_H}\det s_H \ e^{i\langle s(\lambda), H^\perp\rangle}e^{i\langle s_H(s(\lambda)), H^\parallel\rangle}}{\prod_{\alpha\in P_H}(e^{i\langle\alpha, H^\parallel\rangle}-1)}\\
&=C(H)\sum_{s\in W}\det s \cdot e^{i\langle s(\lambda), H^\perp\rangle}\cdot \frac{\sum_{s_H\in W_H}\det s_H \ e^{i\langle s_H(s(\lambda)), H^\parallel\rangle}}{\prod_{\alpha\in P_H}(e^{i\langle\alpha, H^\parallel\rangle}-1)}.\numberthis
\end{align*}
Note that by the definition of $H^{\parallel}$, we have 
\begin{equation}\label{alphaPhiHperp}
\|\frac{1}{2\pi}\langle\alpha, H^\parallel\rangle\|\lesssim N^{-1}, \ \ \forall\alpha\in \Phi_{H}. 
\end{equation}
This means that $\exp H^{\parallel}$ is a corner in the maximal torus of the compact semisimple Lie group associated to the integral root system $\Phi_H$. 

Using the above formula, we rewrite the Schr\"odinger kernel \eqref{Schrodinger kernel2} as 
\begin{align}\label{KNKNs}
K_N(t,x)=\frac{C(H)e^{it|\rho|^2}}{(\prod_{\alpha\in P}\langle\alpha,\rho\rangle)|W|}\sum_{s\in W}\det s \ \cdot K_{N,s}(t,x)
\end{align}
where 
\begin{align*}
K_{N,s}(t,x)=\sum_{\lambda\in \Lambda}e^{i\langle s(\lambda), H^\perp\rangle-it|\lambda|^2} \varphi(\frac{|\lambda|^2-|\rho|^2}{N^2})\left(\prod_{\alpha\in P}\langle \alpha,\lambda\rangle\right)\frac{\sum_{s_H\in W_H}\det s_H \ e^{i\langle s_H(s(\lambda)), H^\parallel\rangle}}{\prod_{\alpha\in P_H}(e^{i\langle\alpha, H^\parallel\rangle}-1)}.
\end{align*}
Noting that for any $s\in W$, $|s(\lambda)|=|\lambda|$, $\prod_{\alpha\in P}\langle\alpha, s(\lambda)\rangle=\det s\  \prod_{\alpha\in P}\langle\alpha, \lambda\rangle$ by Proposition \ref{Anti-invariant by invariant}, and $s(\Lambda)=\Lambda$, we have 
\begin{equation*}
K_{N,s}(t,x)=\det s \  K_{N,\mathbbm{1}}(t,x)
\end{equation*}
where $\mathbbm{1}$ is the identity element in $W$. Then \eqref{KNKNs} becomes 
\begin{align}\label{KNKN1}
K_N(t,x)=\frac{C(H)e^{it|\rho|^2}}{(\prod_{\alpha\in P}\langle\alpha,\rho\rangle)} K_{N,\mathbbm{1}}(t,x).
\end{align}

\begin{prop}\label{PropKN1}
Recall that 
\begin{equation}\label{KN1}
K_{N,\mathbbm{1}}(t,x)=\sum_{\lambda\in \Lambda}e^{i\langle \lambda, H^\perp\rangle-it|\lambda|^2} \varphi(\frac{|\lambda|^2-|\rho|^2}{N^2})\left(\prod_{\alpha\in P}\langle \alpha,\lambda\rangle\right)\frac{\sum_{s_H\in W_H}\det s_H \ e^{i\langle s_H(\lambda), H^\parallel\rangle}}{\prod_{\alpha\in P_H}(e^{i\langle\alpha, H^\parallel\rangle}-1)}.
\end{equation}
Then 
\begin{align}\label{KN1Estimate}
|K_{N,\mathbbm{1}}(t,x)|\lesssim \frac{N^{d-|P\setminus P_H|}}{\left(\sqrt{q}(1+N|\frac{t}{2\pi D}-\frac{a}{q}|^{1/2})\right)^{r}}
\end{align}
for $\frac{t}{2\pi D}\in\mathcal{M}_{a,q}$, uniformly in $x\in G$.  
\end{prop}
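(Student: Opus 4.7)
The plan is to adapt the strategy of Section \ref{From the Weight Lattice to the Root Lattice} to the root subsystem $\Phi_H$ in place of $\Phi$. Let $r_H=\dim V_H$ and let $\Gamma_H\subseteq\Lambda\cap V_H$ denote the root lattice of $\Phi_H$, which has full rank $r_H$ in $V_H$. It follows that the sublattice
\[L:=(\Lambda\cap V_H)\oplus(\Lambda\cap V_H^\perp)\]
has full rank $r$ in $V=i\mathfrak{b}^*$ and finite index in $\Lambda$. Fix integer bases $v_1,\dots,v_{r_H}$ of $L^\parallel:=\Lambda\cap V_H$ and $v_{r_H+1},\dots,v_r$ of $L^\perp:=\Lambda\cap V_H^\perp$, and split
\[\Lambda=\bigsqcup_{\mu\in\Lambda/L}(\mu+L),\qquad K_{N,\mathbbm{1}}=\sum_{\mu}K_{N,\mathbbm{1}}^\mu\]
accordingly. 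Since there are only finitely many root subsystems of $\Phi$, the number of cosets in this decomposition is bounded independently of $H$.

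Inside each coset, Remark \ref{WeylSumRemark} lets us view $K_{N,\mathbbm{1}}^\mu$ as a Weyl sum of the type treated by Lemma \ref{WeylSum} over the rational lattice $L$ (inheriting the period $D$ from $\Lambda$), with coefficient
\[f(\lambda):=\Bigl(\prod_{\alpha\in P}\langle\alpha,\lambda\rangle\Bigr)\,R(\lambda^\parallel,H^\parallel),\qquad R(\lambda^\parallel,H^\parallel):=\frac{\sum_{s_H\in W_H}\det s_H\,e^{i\langle s_H(\lambda^\parallel),H^\parallel\rangle}}{\prod_{\alpha\in P_H}(e^{i\langle\alpha,H^\parallel\rangle}-1)},\]
where $\lambda^\parallel$ is the orthogonal projection of $\lambda$ onto $V_H$. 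The task therefore reduces to establishing
\[|D_{i_1}\cdots D_{i_k}f(\mu+\gamma)|\lesssim N^{|P|+|P_H|-k}\qquad\text{uniformly for }\gamma=\sum_i n_iv_i\in L,\ |n_i|\lesssim N.\]
The polynomial factor $\prod_{\alpha\in P}\langle\alpha,\lambda\rangle$ is of degree $|P|$ in the $n_i$, so standard polynomial differencing gives $\lesssim N^{|P|-k}$. For $R$, which depends only on the parallel coordinates $n_1,\dots,n_{r_H}$, I would follow the argument of Lemma \ref{EstBGG} step by step with $(\Phi_H,W_H,P_H,\Gamma_H,\delta^H)$ replacing $(\Phi,W,P,\Gamma,\delta)$: decompose $H^\parallel=H_1^\parallel+H_2^\parallel$ via the dual basis of simple roots of $\Phi_H$ so that $|\langle\beta,H_1^\parallel\rangle|\lesssim N^{-1}$ and $\langle\beta,H_2^\parallel\rangle\in 2\pi\mathbb{Z}$ for all $\beta\in P_H$; use the identity $s_H(\nu)-\nu\in\Gamma_H$ for $\nu\in\Lambda$ to decouple the $H_2^\parallel$ dependence into a scalar factor; expand $e^{i\langle s_H(\lambda^\parallel),H_1^\parallel\rangle}$ as a power series; invoke Proposition \ref{BGGAnti} order-by-order on the anti-invariance of each term in $H_1^\parallel$ to pull out $\prod_{\alpha\in P_H}\langle\alpha,H_1^\parallel\rangle$, which cancels the denominator up to a bounded factor; and control $D^k\bigl(\delta^H[(\lambda^\parallel+\mu^\parallel)^m]\bigr)$ via the $\Phi_H$-analogue of Proposition \ref{BGGMon2}, using $|\langle\beta_i,\beta_j\rangle|\lesssim 1$, $|\langle\lambda^\parallel+\mu^\parallel,\beta_i\rangle|\lesssim N$, $|\langle\beta_i,H_1^\parallel\rangle|\lesssim N^{-1}$, and $|\langle\lambda^\parallel+\mu^\parallel,H_1^\parallel\rangle|\lesssim 1$. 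This yields $|D_{i_1}\cdots D_{i_k}R|\lesssim N^{|P_H|-k}$ when the differences are among parallel coordinates and $0$ otherwise; a Leibniz-rule combination then gives the desired bound on $f$.

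Feeding $A=|P|+|P_H|$ into Lemma \ref{WeylSum} and summing over the finitely many cosets produces
\[|K_{N,\mathbbm{1}}(t,x)|\lesssim\frac{N^{r+|P|+|P_H|}}{\bigl(\sqrt{q}(1+N\|\tfrac{t}{2\pi D}-\tfrac{a}{q}\|^{1/2})\bigr)^r},\]
and the arithmetic $d-|P\setminus P_H|=(r+2|P|)-(|P|-|P_H|)=r+|P|+|P_H|$ matches the target exponent. The main obstacle is the transfer of the BGG-Demazure machinery of Lemma \ref{EstBGG} to the subsystem setting: besides verifying $s_H(\nu)-\nu\in\Gamma_H$ for $\nu\in\Lambda$, one needs the implicit constants in Proposition \ref{BGGMon2} and the convergence bound \eqref{Cmm!} to be uniform across all root subsystems $\Phi_H\subseteq\Phi$, which is guaranteed by the finiteness of the family of such subsystems.
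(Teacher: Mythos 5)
Your proposal is correct and follows the same overall strategy as the paper: split the weight lattice into finitely many cosets of a sublattice adapted to $\Phi_H$, run the BGG--Demazure differencing argument of Lemma \ref{EstBGG} on the subsystem character to get $|D_{i_1}\cdots D_{i_k}R|\lesssim N^{|P_H|-k}$, and feed $A=|P|+|P_H|$ into Lemma \ref{WeylSum} (with Remark \ref{WeylSumRemark} absorbing the coset shift and with $H^\perp$ playing the role of the free variable $H$ there); your exponent arithmetic $d-|P\setminus P_H|=r+|P|+|P_H|$ is exactly the paper's. The one genuine difference is the choice of sublattice. The paper projects and lifts: using Lemma \ref{LemmaRootSubsystem} it builds a basis $u_1,\dots,u_r$ of $\Lambda$ with $\text{Proj}_{V_H}(u_i)=0$ for $i>r_H$ together with a lift $\Gamma_H'$ of the root lattice $\Gamma_H$, so the parallel part of the summation variable runs literally over $\Gamma_H$ in simple-root coordinates and Lemma \ref{EstBGG} can be cited verbatim for $\Phi_H$, the shifts $\mu^\parallel\in\text{Proj}_{V_H}(\Lambda)$ lying in the weight lattice of $\Phi_H$ by part (1) of that lemma. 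You instead intersect, $L=(\Lambda\cap V_H)\oplus(\Lambda\cap V_H^\perp)$, which is cleaner to state but leaves two small points you should make explicit: (a) that $\Lambda\cap V_H^\perp$ has full rank $r-r_H$ --- this does not follow merely from $\Gamma_H\subseteq\Lambda\cap V_H$, but it does follow because the Cartan--Killing form is rational on $\Lambda$ (Lemma \ref{DGamma}), so the orthogonal complement of the rational subspace $V_H$ is again a rational subspace and meets $\Lambda$ in a full-rank lattice; and (b) that the Lemma \ref{EstBGG} argument survives when the parallel variable ranges over $\Lambda\cap V_H$ in an arbitrary basis rather than over $\Gamma_H$ in simple-root coordinates --- either rerun the proof as you outline (all that is used is that $\lambda^\parallel$ lies in the weight lattice of $\Phi_H$, so that $s_H(\lambda^\parallel)-\lambda^\parallel\in\Gamma_H$ decouples $H_2^\parallel$, together with the size bounds you list and Propositions \ref{BGGAnti} and \ref{BGGMon2} for $\Phi_H$), or decompose $\Lambda\cap V_H$ further into its finitely many $\Gamma_H$-cosets and cite Lemma \ref{EstBGG} directly with the coset representative absorbed into the shift $\mu$. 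With these two remarks supplied, and with the uniformity over the finitely many subsystems $\Phi_H$ handled exactly as you say, the proof is complete and matches the paper's in substance.
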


Noting \eqref{C(H)} and \eqref{KNKN1}, the above proposition implies Part (ii) of Theorem \ref{MainEstimate}.

\begin{exmp}

Figure \ref{figure} is an illustration of the decomposition of the maximal torus of $SU(3)$ according to the values of $\|\frac{1}{2\pi}\langle\alpha, H\rangle\|$, $\alpha\in\Phi$. Here $P=\{\alpha_1, \alpha_2,\alpha_3=\alpha_1+\alpha_2\}$. The three proper root subsystems of $\Phi$ are $\{\pm \alpha_i\}$, $i=1,2,3$. The association of $\Phi_H$ to $H$ is as follows. 
\begin{align*}
H\in regions\  of\  color\  \begin{tikzpicture}\draw[fill=red, opacity=0.2](0,0)--(0,0.4)--(0.4,0.4)--(0.4,0);\end{tikzpicture}\  &\Leftrightarrow\  \Phi_H=\Phi,\\
H\in regions\  of\  color\  \begin{tikzpicture}\draw[fill=yellow, opacity=0.2](0,0)--(0,0.4)--(0.4,0.4)--(0.4,0);\end{tikzpicture}\  &\Leftrightarrow\  \Phi_H=\{\pm \alpha_1\},\\
H\in regions\  of\  color\  \begin{tikzpicture}\draw[fill=pink, opacity=0.2](0,0)--(0,0.4)--(0.4,0.4)--(0.4,0);\end{tikzpicture}\  &\Leftrightarrow\  \Phi_H=\{\pm \alpha_2\},\\
H\in regions\  of\  color\  \begin{tikzpicture}\draw[fill=cyan, opacity=0.2](0,0)--(0,0.4)--(0.4,0.4)--(0.4,0);\end{tikzpicture}\  &\Leftrightarrow\  \Phi_H=\{\pm \alpha_3\},\\
H\in regions\  of\  color\  \begin{tikzpicture}\draw[fill=green, opacity=0.2](0,0)--(0,0.4)--(0.4,0.4)--(0.4,0);\end{tikzpicture}\  &\Leftrightarrow\  \Phi_H=\emptyset.\\
\end{align*}

\begin{figure}
\centering
\scalebox{0.75}[0.75]
{
\begin{tikzpicture}

\draw (0, 0) -- (12,0);

\draw (0, 0) -- (6, 10.392);
\draw (0,0) -- (0, 13.856);
\node [left] at (0, 13.856) {\scalebox{1.33}{$\large{2\pi \frac{2\alpha_1}{\langle\alpha_1,\alpha_1\rangle}}$}};
\node [right] at (12, 6.928) {\scalebox{1.33}{$\large{2\pi\frac{2\alpha_3}{\langle\alpha_3,\alpha_3\rangle}}$}};
\draw (0,0) -- (12, -6.928);
\node [right] at (12, -6.928) {\scalebox{1.33}{$\large{2\pi\frac{2\alpha_2}{\langle\alpha_2,\alpha_2\rangle}}$}};
\draw (0, 13.856) -- (12, -6.928); 
\draw (0, 13.856) -- (12, 6.928);
\draw (12, 6.928) -- (12, -6.928);
\draw (0, 6.928) -- (12, 6.928);
\draw (6, -3.464) -- (12, 6.928);

\draw[dashed] (0, 0.5) -- (12,0.5);
\draw[dashed] (0.866, -0.5) -- (12, -0.5); 
\draw[dashed] (0.577, 0) -- (6.433, 10.142);
\draw[dashed] (0, 12.856) -- (11.134, -6.428); 
\draw[dashed] (0, 1) -- (5.577, 10.642);
\draw[dashed] (0.866,13.356) -- (12, -5.928); 
\draw[dashed] (0,6.428) -- (12, 6.428);
\draw[dashed] (0,7.428) -- (11.134, 7.428);
\draw[dashed] (5.567,-3.214) -- (11.567, 7.178);
\draw[dashed] (6.433,-3.714) -- (12, 5.928);
\draw[dashed] (0, 12.856) -- (0.433,13.606);
\draw[dashed] (11.567, -6.678) -- (12, -5.928);
\draw[dashed] (0,1) -- (0.866, -0.5);
\draw[dashed] (12, 5.928) -- (11.134, 7.428);

\draw [fill=red, opacity=0.2]
(12, 5.928) -- (12, 6.928) -- (11.134, 7.428) -- (11.423,6.928) -- (11.134,6.428) -- (11.7113, 6.428) -- cycle;

\draw[dashed] (0, 13.356) -- (0.866, 13.356);
\draw[dashed] (11.134, -6.428) -- (12, -6.428);

\draw [fill=red, opacity=0.2]
       (0,0) -- (0.866, -0.5) -- (0.577,0) -- (0.8657,0.5) -- (0.2887, 0.5) -- (0,1) -- cycle;

\draw [fill=red, opacity=0.2]
(0, 13.856) -- (0, 12.856) -- (0.2887, 13.356) -- (0.866, 13.356) -- cycle;

\draw [fill=red, opacity=0.2]
(12, -6.928) -- (11.134, -6.428) -- (11.7113,  -6.428) -- (12, -5.928) -- cycle;

\draw [fill=red, opacity=0.2]
       (7.423,0) -- (7.1343,0.5) -- (7.7113, 0.5) -- (8, 1) -- (8.2887, 0.5) -- (8.866, 0.5) -- (8.577, 0) -- (8.866, -0.5) -- (8.2887, -0.5) -- (8, -1) -- (7.7113, -0.5) -- (7.1343,-0.5) -- 
       cycle;

\draw [fill=red, opacity=0.2]
(3.423, 6.928) -- (3.1343, 7.428) -- (3.7113, 7.428) -- (4, 7.928) -- (4.2887, 7.428) -- (4.866, 7.428) -- (4.577, 6.928) -- (4.866, 6.428) -- (4.2887, 6.428) -- (4, 5.928) -- (3.7113, 6.428) -- (3.1343,6.428) 
--cycle;

\draw[fill=yellow, opacity =0.2]
(0,6.428) -- (3.1343,6.428) -- (3.423, 6.928) -- (3.1343, 7.428) -- (0, 7.428) -- cycle; 
\draw [fill=yellow, opacity=0.2]
 (11.134, 7.428) -- (11.423,6.928) -- (11.134,6.428) --(4.866, 6.428) --  (4.577, 6.928) -- (4.866, 7.428) --cycle;

\draw[fill=pink, opacity=0.2]
(12, 5.928) -- (8.866, 0.5)-- (8.2887, 0.5)--(8, 1)  --(11.134,6.428) -- (11.7113, 6.428) -- cycle; 

\draw[fill=cyan, opacity=0.2]
(0, 12.856) -- (0.2887, 13.356)  --  (0.866, 13.356)  --  (4, 7.928) --  (3.7113, 7.428) --(3.1343, 7.428) --(3.423, 6.928) -- cycle;

\draw[fill=pink, opacity=0.2]
(6.433, 10.142)-- (5.577, 10.642) -- (4, 7.928) -- (4.2887, 7.428) -- (4.866, 7.428) -- cycle;

\draw[fill=yellow, opacity=0.2]
(8.866, 0.5) -- (8.577, 0) -- (8.866, -0.5) -- (12, -0.5) -- (12, 0.5)--cycle; 

\draw[fill=cyan, opacity=0.2]
(8.866, -0.5) -- (8.2887, -0.5) -- (8, -1) -- (11.134, -6.428) -- (11.7113,  -6.428) -- (12, -5.928) -- cycle; 

\draw[fill=pink, opacity=0.2]
 (8, -1) -- (7.7113, -0.5) -- (7.1343,-0.5) -- (5.567,-3.214) -- (6.433,-3.714) -- cycle;

\draw [fill=pink, opacity=0.2]
(0.2887, 0.5) -- (0.8657,0.5) -- (4, 5.928) -- (3.7113, 6.428) --  (3.1343,6.428) -- (0,1) -- cycle;
\draw [fill=yellow, opacity=0.2]
(0.8657,0.5) -- (0.577,0) -- (0.866,-0.5)  -- (7.1343,-0.5) -- 
(7.423,0) -- 
(7.423,0) -- (7.1343,0.5) -- cycle;
\draw [fill=cyan, opacity=0.2]
(7.1343,0.5) -- (7.7113, 0.5) -- (8, 1) -- (4.866, 6.428) -- (4.2887, 6.428) -- (4, 5.928) -- cycle;

\draw [fill=green, opacity=0.2]
(0.8657,0.5) -- (7.1343,0.5) -- (4, 5.928) -- cycle;

\draw [fill=green, opacity=0.2]
(0,1)--(0,6.428) -- (3.1343,6.428) -- cycle;

\draw [fill=green, opacity=0.2]
(0,7.428)  -- (0,12.856) -- (3.1343, 7.428) -- cycle;

\draw [fill=green, opacity=0.2]
(4, 7.928) -- (0.866, 13.356) -- (5.577, 10.642) -- cycle; 

\draw [fill=green, opacity=0.2]
(6.433, 10.142) -- (4.866, 7.428) -- (11.134, 7.428) -- cycle; 

\draw [fill=green, opacity=0.2]
(4.866, 6.428) -- (8,1) -- (11.134,6.428) --cycle; 

\draw [fill=green, opacity=0.2]
(12, 0.5) -- (12, 5.928) -- (8.866, 0.5) -- cycle; 

\draw [fill=green, opacity=0.2]
 (8.866, -0.5) --  (12, -0.5) -- (12, -5.928) -- cycle; 

\draw [fill=green, opacity=0.2]
(0.866, -0.5) -- (7.137, -0.5 )  --(5.567,-3.214) -- cycle; 

\draw [fill=green, opacity=0.2]
(6.433,-3.714) -- (8, -1) -- (11.134,-6.428)-- cycle;

\draw[fill]  (6,6.7) circle [radius=0.05];
\node [right] at (6,6.7) {\scalebox{1.33}{$\large{H}$}};

\draw[fill]  (0,6.7) circle [radius=0.05];
\node [left] at (0,6.7) {\scalebox{1.33}{$\large{H^{\parallel}}$}};

\draw[fill]  (6,0) circle [radius=0.05];
\node [below] at (6,0) {\scalebox{1.33}{$\large{H^{\perp}}$}};

\draw[dotted] (0, 6.7) -- (6,6.7);
\draw[dotted] (6,6.7) -- (6,0);

\draw[<->] (12.2, 0.5) -- (12.2, 0); 
\node[right] at (12.2, 0.25) {\scalebox{1.33}{$\ \sim N^{-1}$}};
\end{tikzpicture}
}
\caption{
Decomposition of the maximal torus of SU(3) \\
according to the values of $\|\frac{1}{2\pi}\langle\alpha, H\rangle\|$, $\alpha\in\Delta$
}\label{figure}

\end{figure}

\end{exmp}

\subsubsection{Decomposition of the Weight Lattice}
To prove Proposition \ref{PropKN1}, we will make a decomposition of the weight lattice $\Lambda$ according to the root subsystem $\Phi_H$. First, 
we have the following lemma about root subsystems. Let $\text{Proj}_U$ denote the orthogonal projection map from the ambient inner product space onto the subspace $U$. 

\begin{lem}\label{LemmaRootSubsystem}
Let $\Phi$ be an integral root system in the space $V$ with the associated weight lattice $\Lambda_{\Phi}$. Let $\Psi$ be a root subsystem of $\Phi$. Then let $\Gamma_{\Psi}$ and $\Lambda_\Phi$ be the root lattice and weight lattice associated to $\Psi$ respectively. Let $V_\Psi$ be the $\mathbb{R}$-linear span of $\Psi$ in $V$. Let $\Upsilon_\Psi$ be the image of the orthogonal projection of $\Lambda_{\Phi}$ onto $V_\Psi$. Then the following statements hold true. \\
(1) $\Upsilon_\Psi$ is a lattice and $\Gamma_\Psi\subset\Upsilon_\Psi\subset\Lambda_\Psi$. In particular, the rank of $\Upsilon_\Psi$ equals the rank of $\Gamma_\Psi$ as well as $\Lambda_\Psi$. 
\\
(2) Let the ranks of $\Upsilon_\Psi$ and $\Lambda_\Phi$  be $r$ and $R$ respectively. Let $\{w_1,\cdots, w_r\}$ be a basis of $\Upsilon_\Psi$. Pick any $\{u_1,\cdots, u_r\}\subset \Lambda_\Phi$ such that $\text{Proj}_{V_\Psi}(u_i)=w_i$, $i=1,\cdots, r$. Then we can extend $\{u_1,\cdots, u_r\}$ into a basis $\{u_1,\cdots, u_r, u_{r+1},\cdots, u_R\}$ of $\Lambda_\Phi$. Furthermore, we can pick $\{u_{r+1},\cdots, u_R\}$ such that $\text{Proj}_{V_\Psi}(u_i)=0$ for $i=r+1,\cdots, R$. 
\end{lem}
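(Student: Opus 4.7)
The plan for part (1) is to establish the two containments $\Gamma_\Psi \subset \Upsilon_\Psi \subset \Lambda_\Psi$, from which both the lattice property and the rank statement follow for free. For the lower containment, I would note that every root $\alpha \in \Psi$ already lies in $V_\Psi$, so $\text{Proj}_{V_\Psi}$ acts as the identity on it; combined with $\Psi \subset \Phi \subset \Lambda_\Phi$ (roots are always integral weights), this places $\Psi$ itself inside $\Upsilon_\Psi$, and taking $\mathbb{Z}$-linear combinations propagates the inclusion to $\Gamma_\Psi$. For the upper containment, the key observation is that whenever $\alpha \in \Psi \subset V_\Psi$, the orthogonal-to-$V_\Psi$ component of any $\lambda \in \Lambda_\Phi$ is annihilated by $\langle \cdot, \alpha\rangle$, so $\langle \text{Proj}_{V_\Psi}(\lambda), \alpha\rangle = \langle \lambda, \alpha\rangle$; the defining integrality condition $2\langle \lambda,\alpha\rangle/\langle \alpha,\alpha\rangle \in \mathbb{Z}$ for $\Lambda_\Phi$ then transfers verbatim to place $\text{Proj}_{V_\Psi}(\lambda)$ in $\Lambda_\Psi$. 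Once $\Upsilon_\Psi$ is sandwiched between two full-rank lattices of $V_\Psi$, it is automatically a discrete finite-index sublattice of $\Lambda_\Psi$, hence a lattice of rank $\dim V_\Psi$, matching both $\Gamma_\Psi$ and $\Lambda_\Psi$.

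For part (2), my approach is to exhibit a direct sum decomposition of $\Lambda_\Phi$ realized by splitting the surjection $\text{Proj}_{V_\Psi}|_{\Lambda_\Phi} \colon \Lambda_\Phi \to \Upsilon_\Psi$ via the section $w_i \mapsto u_i$. Concretely, I would set $L := \mathbb{Z}u_1 + \cdots + \mathbb{Z}u_r$ and $N := \ker(\text{Proj}_{V_\Psi}|_{\Lambda_\Phi})$, and verify that (i) $L \cap N = 0$, which is immediate since the $u_i$ inherit $\mathbb{Z}$-linear independence from the independence of their projections $w_i$; and (ii) $L + N = \Lambda_\Phi$, since for any $\lambda \in \Lambda_\Phi$ one writes $\text{Proj}_{V_\Psi}(\lambda) = \sum_{i=1}^r a_i w_i$ with integer $a_i$ (using that $\{w_i\}$ is a $\mathbb{Z}$-basis of $\Upsilon_\Psi$), and then $\lambda - \sum_i a_i u_i$ manifestly lies in $N$. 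This gives $\Lambda_\Phi = L \oplus N$ as abelian groups. Since $\Lambda_\Phi$ is free of rank $R$ and $L$ is free of rank $r$, the kernel $N$ is free abelian of rank $R - r$; any $\mathbb{Z}$-basis $\{u_{r+1},\ldots,u_R\}$ of $N$ completes $\{u_1,\ldots,u_r\}$ to a $\mathbb{Z}$-basis of $\Lambda_\Phi$, and by construction $\text{Proj}_{V_\Psi}(u_i) = 0$ for $i = r+1,\ldots,R$.

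I do not anticipate any real obstacles here: the argument is essentially elementary $\mathbb{Z}$-module theory combined with the standard integrality axiom on root systems. The only point requiring a little care is the discreteness of $\Upsilon_\Psi$, which is not obvious a priori from the projection construction alone (the projection of a lattice need not in general be discrete), but is forced here by the containment $\Upsilon_\Psi \subset \Lambda_\Psi$ established in part (1); this same sandwich argument simultaneously pins down the rank.
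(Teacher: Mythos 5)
Your proof is correct. Part (1) follows essentially the same route as the paper: you obtain $\Gamma_\Psi\subset\Upsilon_\Psi$ because the projection is the identity on $\Gamma_\Psi\subset V_\Psi$ and $\Gamma_\Psi\subset\Lambda_\Phi$, and you obtain $\Upsilon_\Psi\subset\Lambda_\Psi$ from $\langle\mathrm{Proj}_{V_\Psi}(\lambda),\alpha\rangle=\langle\lambda,\alpha\rangle$ for $\alpha\in\Psi$ together with the integrality axiom; your explicit remark that discreteness of $\Upsilon_\Psi$ is \emph{not} automatic for projections of lattices but is forced by the containment in $\Lambda_\Psi$ is a point the paper passes over with ``it's clear,'' and is worth making. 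Part (2), however, is a genuinely different argument from the paper's. The paper applies the structure theorem for sublattices (elementary divisors, citing Hungerford) to $S_\Phi=\mathbb{Z}u_1+\cdots+\mathbb{Z}u_r\subset\Lambda_\Phi$, argues that all divisors $d_i$ equal $1$ because $\mathrm{Proj}_{V_\Psi}(S_\Phi)=\mathrm{Proj}_{V_\Psi}(\Lambda_\Phi)=\Upsilon_\Psi$ and the projections of the new basis vectors are independent, and then corrects the complementary basis vectors $u_{r+1}',\dots,u_R'$ by integer combinations of $u_1,\dots,u_r$ to annihilate their projections. You instead split the surjection $\mathrm{Proj}_{V_\Psi}|_{\Lambda_\Phi}\colon\Lambda_\Phi\to\Upsilon_\Psi$ via the section $w_i\mapsto u_i$, getting $\Lambda_\Phi=L\oplus N$ with $N$ the kernel, and take any $\mathbb{Z}$-basis of $N$ as $u_{r+1},\dots,u_R$; the vanishing of their projections is then automatic rather than arranged afterwards. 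Your route is more self-contained (it only needs that a subgroup of a finitely generated free abelian group is free, to know $N$ has a basis and rank $R-r$) and avoids both the divisibility argument and the final adjustment step, at the mild cost of verifying the two direct-sum conditions $L\cap N=0$ and $L+N=\Lambda_\Phi$, which you do correctly using the $\mathbb{Z}$-basis property of $\{w_i\}$ for $\Upsilon_\Psi$. Both arguments are complete; yours is arguably the cleaner packaging of the same underlying facts.
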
 

\begin{proof} (1)
It's clear that $\Upsilon_\Psi$ is a lattice. Let $\Gamma_\Phi$ be the root lattice associated to $\Phi$. Then $\Gamma_\Psi\subset\Gamma_\Phi$. 
Thus 
\begin{align*}
\Gamma_\Psi=\text{Proj}_{V_\Psi}(\Gamma_\Psi)
\subset\text{Proj}_{V_\Psi}(\Gamma_\Phi)
\subset\text{Proj}_{V_\Psi}(\Lambda_\Phi)=\Upsilon_\Psi.
\end{align*} 
On the other hand, 
for any $\mu\in \Lambda_\Phi$, $\alpha\in\Gamma_\Psi$, we have $\langle\text{Proj}_{V_\Psi}(\mu),\alpha\rangle=\langle \mu,\alpha\rangle$. This in particular implies 
\begin{align*}
2\frac{\langle\text{Proj}_{V_\Psi}(\mu),\alpha\rangle}{\langle\alpha,\alpha\rangle}=2\frac{\langle\mu,\alpha\rangle}{\langle\alpha,\alpha\rangle}\in \mathbb{Z}, \ \ \text{for all }\mu\in \Lambda_\Phi,\  \alpha\in\Gamma_\Psi.
\end{align*}
This implies $\text{Proj}_{V_\Psi}(\mu)\in\Lambda_\Psi$ for all $\mu\in\Lambda_\Phi$, that is, 
$\Upsilon_\Psi=\text{Proj}_{V_\Psi}(\Lambda_\Phi)\subset\Lambda_\Psi$.\\
(2) Let $S_{\Phi}:=\mathbb{Z}u_1+\cdots+\mathbb{Z}u_r$, then $S_{\Phi}$ is a sublattice of $\Lambda_\Phi$ of rank $r$. By the theory of modules over a principal ideal domain, there exists a basis $\{u_1',\cdots, u_R'\}$ of $\Lambda_\Phi$ and positive integers $d_1|d_2|\cdots|d_r$ such that 
$\{d_1u_1',\cdots, d_ru_r'\}$ is a basis of $S_\Phi$. 
Then we must have $d_1=d_2=\cdots=d_r=1$, since 
\begin{align*}
\mathbb{Z}d_1\text{Proj}_{V_\Psi}(u_1')+\cdots+\mathbb{Z}d_r\text{Proj}_{V_\Psi}(u_r')&=\text{Proj}_{V_\Psi}(S_\Phi)\\
&=\text{Proj}_{V_\Psi}(\Lambda_\Phi)
\supset \mathbb{Z}\text{Proj}_{V_\Psi}(u_1')+\cdots+\mathbb{Z}\text{Proj}_{V_\Psi}(u_r')\numberthis
\end{align*}
and that $u_1', \cdots, u_r'$ are $\mathbb{R}$-linear independent. Thus we have 
\begin{align*}
S_\Phi=\mathbb{Z}u_1+\cdots+\mathbb{Z}u_r=\mathbb{Z}u_1'+\cdots+\mathbb{Z}u_r'
\end{align*}
and then $\{u_1,\cdots, u_r, u_{r+1}',\cdots, u_R'\}$ is also a basis of $\Lambda_\Phi$. Furthermore, 
by adding a $\mathbb{Z}$-linear combination of $u_1,\cdots, u_r$ to each of $u_{r+1}',\cdots, u_R'$, we can assume that $\text{Proj}_{V_\Psi}(u_i')=0$, for $i=r+1,\cdots, R$.

\end{proof}

We apply the above lemma to the root subsystem $\Phi_H$ of $\Phi$. Let $V=i\mathfrak{b}^*$, $V_H$ be the $\mathbb{R}$-linear span of $\Phi_H$ in $V$, $\Gamma_H$ be the root lattice for $\Phi_H$, and let 
\begin{equation}\label{UH}
\Upsilon_H:=\text{Proj}_{V_H}(\Lambda).
\end{equation} 
Then by the above lemma, we have 
\begin{equation}\label{UHGH}
\Upsilon_H\supset \Gamma_H.
\end{equation}
Let $r_H$ be the rank of $\Phi_H$ as well as of $\Gamma_H$ and $\Upsilon_H$, and let $\{w_1,\cdots, w_{r_H}\}\subset\Upsilon_H$ such that 
\begin{equation*}
\Upsilon_H=\mathbb{Z}w_1+\cdots+\mathbb{Z}w_{r_H}. 
\end{equation*}
Pick $\{u_1,\cdots, u_{r_H}\}\subset \Lambda$ such that 
\begin{equation*}
\text{Proj}_{V_H}(u_i)=w_i, \ \ i=1,\cdots, r_H. 
\end{equation*}
Then by the above lemma, we can extend $\{u_1,\cdots, u_{r_H}\}$ into a basis $\{u_1,\cdots, u_r\}$ of $\Lambda$, such that 
\begin{equation}\label{ProjUrH+1}
\text{Proj}_{V_H}(u_i)=0,\ \ i=r_H+1,\cdots, r,
\end{equation}
with 
\begin{equation*}
\Lambda=\mathbb{Z}u_1+\cdots+\mathbb{Z}u_r.
\end{equation*}
Set
\begin{equation*}
\Upsilon'_H=\mathbb{Z}u_1+\cdots+\mathbb{Z}u_{r_H}\subset\Lambda,
\end{equation*}
then 
\begin{equation*}
\text{Proj}_{V_H}: \Upsilon'_H\xrightarrow{\sim}\Upsilon_H. 
\end{equation*}
Recalling \eqref{UHGH}, let $\Gamma'_H$ be the sublattice of $\Upsilon'_H$ corresponding to $\Gamma_H\subset\Upsilon_H$ under this isomorphism. More precisely, let $\{\alpha_1,\cdots, \alpha_{r_H}\}$ be a simple system of roots for $\Gamma_H$, then
\begin{align}\label{ProjG'G}
\text{Proj}_{V_H}: \Gamma'_H=\mathbb{Z}\alpha'_{1}+\cdots+\mathbb{Z}\alpha'_{r_H}\xrightarrow{\sim}\Gamma_H=\mathbb{Z}\alpha_{1}+\cdots+\mathbb{Z}\alpha_{r_H},\ \ \alpha_i'\mapsto\alpha_i,\ \ i=1,\cdots,r_H,
\end{align}
and we have 
\begin{equation}\label{Upsilon/Gamma}
\Upsilon'_H/\Gamma'_H\cong \Upsilon_H/\Gamma_H, \ \ |\Upsilon'_H/\Gamma'_H|=|\Upsilon_H/\Gamma_H|<\infty. 
\end{equation}
Decomposing the weight lattice as
\begin{align*}
\Lambda=\bigsqcup_{\mu\in\Upsilon'_H/\Gamma'_H}\left(\mu+\Gamma'_H+\mathbb{Z}u_{r_{H}+1}+\cdots+\mathbb{Z}u_{r}\right),
\end{align*}
we have 
\begin{align*}
K_{N,\mathbbm{1}}(t,x)&=\sum_{\substack{\mu\in\Upsilon'_H/\Gamma'_H,\\ \lambda'_1=n_1\alpha'_1+\cdots+n_{r_H}\alpha'_{r_H},\\ \lambda_2=n_{r_H+1}u_{r_{H}+1}+\cdots+n_ru_{r}}}
e^{i\langle \mu+\lambda'_1+\lambda_2, H^\perp\rangle-it| \mu+\lambda'_1+\lambda_2|^2} \varphi(\frac{|\mu+\lambda'_1+\lambda_2|^2-|\rho|^2}{N^2})\\
& \ \ \ \ \cdot \left(\prod_{\alpha\in P}\langle \alpha,\mu+\lambda'_1+\lambda_2\rangle\right)\frac{\sum_{s_H\in W_H}\det s_H \ e^{i\langle s_H(\mu+\lambda'_1+\lambda_2), H^\parallel\rangle}}{\prod_{\alpha\in P_H}(e^{i\langle\alpha, H^\parallel\rangle}-1)}.\numberthis
\end{align*}
Note that \eqref{ProjUrH+1} implies for $\lambda_2=n_{r_H+1}u_{r_{H}+1}+\cdots+n_ru_r$ that 
\begin{align*}
\langle s_H(\lambda_2),H^{\parallel}\rangle=
\langle \lambda_2,s_H(H^{\parallel})\rangle=0,
\end{align*}
and \eqref{ProjG'G} implies for $\lambda'_1=n_1\alpha'_1+\cdots+n_{r_H}\alpha'_{r_H}$ that 
\begin{align*}
\langle s_H(\lambda_1'),H^{\parallel}\rangle=
\langle \lambda'_1,s_H(H^{\parallel})\rangle=\langle \lambda_1,s_H(H^{\parallel})\rangle=\langle s_H(\lambda_1),H^{\parallel}\rangle
\end{align*}
where $\lambda_1=n_1\alpha_1+\cdots+n_{r_H}\alpha_{r_H}\in V_H$. Similarly, also note that 
\begin{align*}
\langle s_H(\mu), H^\parallel\rangle
=\langle s_H(\mu^{\parallel}), H^\parallel\rangle, \ \ \text{where } \mu^{\parallel}:=\text{Proj}_{V_H}(\mu). 
\end{align*}
Thus we write
\begin{align*}
K_{N,\mathbbm{1}}(t,x)&=\sum_{\mu\in\Upsilon'_H/\Gamma'_H}\sum_{\substack{\lambda'_1=n_1\alpha'_1+\cdots+n_{r_H}\alpha'_{r_H},\\ \lambda_1=n_1\alpha_1+\cdots+n_{r_H}\alpha_{r_H},\\ \lambda_2=n_{r_H+1}u_{r_{H}+1}+\cdots+n_ru_{r}}}
e^{i\langle \mu+\lambda'_1+\lambda_2, H^\perp\rangle-it| \mu+\lambda'_1+\lambda_2|^2} \varphi(\frac{|\mu+\lambda'_1+\lambda_2|^2-|\rho|^2}{N^2})\\
& \ \ \ \ \cdot \left(\prod_{\alpha\in P}\langle \alpha,\mu+\lambda'_1+\lambda_2\rangle\right) \frac{\sum_{s_H\in W_H}\det s_H \ e^{i\langle s_H(\mu^{\parallel}+\lambda_1), H^\parallel\rangle}}{\prod_{\alpha\in P_H}(e^{i\langle\alpha, H^\parallel\rangle}-1)}.\numberthis
\end{align*}

\begin{rem}\label{Weightlattice}
We have that in the above formula
\begin{align*}
\chi^{\mu^\parallel}(\lambda_1, H^\parallel)
:=\frac{\sum_{s_H\in W_H}\det s_H \ e^{i\langle s_H(\mu^{\parallel}+\lambda_1), H^\parallel\rangle}}{\prod_{\alpha\in P_H}(e^{i\langle\alpha, H^\parallel\rangle}-1)}
\end{align*}
is a character of the form \eqref{EstBGG2character}. Also note that $\mu^\parallel\in\text{Proj}_{V_H}(\Lambda)$ lies in the weight lattice $\Lambda_H$ of $\Phi_H$ by Lemma \ref{LemmaRootSubsystem}. 
\end{rem}

Noting \eqref{Upsilon/Gamma}, Proposition \ref{PropKN1} reduces to the following. 

\begin{prop}\label{PropKN1mu}
For $\mu\in\Upsilon'_H/\Gamma'_H$, let
\begin{align*}
K_{N,\mathbbm{1}}^{\mu}(t,x):&=\sum_{\substack{\lambda'_1=n_1\alpha'_1+\cdots+n_{r_H}\alpha'_{r_H},\\ \lambda_1=n_1\alpha_1+\cdots+n_{r_H}\alpha_{r_H},\\ \lambda_2=n_{r_H+1}u_{r_{H}+1}+\cdots+n_ru_{r},\\
n_1,\cdots,n_r\in\mathbb{Z}}}
e^{i\langle \mu+\lambda'_1+\lambda_2, H^\perp\rangle-it| \mu+\lambda'_1+\lambda_2|^2} \varphi(\frac{|\mu+\lambda'_1+\lambda_2|^2-|\rho|^2}{N^2})\\
& \ \ \ \ \cdot \left(\prod_{\alpha\in P}\langle \alpha,\mu+\lambda'_1+\lambda_2\rangle\right) \frac{\sum_{s_H\in W_H}\det s_H \ e^{i\langle s_H(\mu^\parallel+\lambda_1), H^\parallel\rangle}}{\prod_{\alpha\in P_H}(e^{i\langle\alpha, H^\parallel\rangle}-1)}.\numberthis
\end{align*}
Then 
\begin{align}\label{KN1mu}
|K_{N,\mathbbm{1}}^{\mu}(t,x)|\lesssim \frac{N^{d-|P\setminus P_H|}}{\left(\sqrt{q}(1+N|\frac{t}{2\pi D}-\frac{a}{q}|^{1/2})\right)^{r}}
\end{align}
for $\frac{t}{2\pi D}\in\mathcal{M}_{a,q}$, uniformly in $x\in G$.  
\end{prop}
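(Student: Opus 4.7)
The plan is to apply Lemma~\ref{WeylSum} to the sum, viewed as ranging over the rank-$r$ sublattice
\[
L := \mathbb{Z}\alpha_1' + \cdots + \mathbb{Z}\alpha_{r_H}' + \mathbb{Z}u_{r_H+1} + \cdots + \mathbb{Z}u_r \subset \Lambda,
\]
whose basis vectors lie in $\Lambda$ and therefore pair into $D^{-1}\mathbb{Z}$ by Lemma~\ref{DGamma}, making $L$ rational of period $D$. With $\lambda := \lambda_1'+\lambda_2 \in L$ and $\lambda_0 := \mu$, the phase rewrites as $e^{i\langle\mu, H^\perp\rangle}\cdot e^{-it|\lambda+\lambda_0|^2 + i\langle\lambda, H^\perp\rangle}$ and the cutoff as $\varphi(\frac{|\lambda+\lambda_0|^2-|\rho|^2}{N^2})$, matching the setting of Lemma~\ref{WeylSum} via Remark~\ref{WeylSumRemark} with $H = H^\perp$ and $C = -|\rho|^2$. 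Since $d = r + 2|P|$ and $|P\setminus P_H| = |P|-|P_H|$, to reach the numerator $N^{d-|P\setminus P_H|} = N^{r + |P| + |P_H|}$ of \eqref{KN1mu} it suffices to verify hypothesis \eqref{DecCon} with $A = |P| + |P_H|$ for the amplitude
\[
f := \Big(\prod_{\alpha\in P}\langle\alpha,\mu+\lambda_1'+\lambda_2\rangle\Big)\cdot \chi_{\mu^\parallel+\lambda_1}(H^\parallel).
\]

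The polynomial factor has total degree $|P|$ in the coordinates $(n_1,\ldots, n_r)$ of $\lambda$ along the basis of $L$, so $|D_{i_1}\cdots D_{i_k}\prod_{\alpha\in P}\langle\alpha,\mu+\lambda_1'+\lambda_2\rangle| \lesssim N^{|P|-k}$ uniformly in $|n_i|\lesssim N$. For the character factor, first note that it depends on the summation variables only through $\lambda_1 \in \Gamma_H$, hence $D_i\chi_{\mu^\parallel+\lambda_1}(H^\parallel) = 0$ whenever $i > r_H$. For $i \leq r_H$, the key observation is that, up to the $\lambda_1$-independent unimodular factor $e^{-i\langle\rho_H, H^\parallel\rangle}$, $\chi_{\mu^\parallel+\lambda_1}(H^\parallel)$ is the character of the root subsystem $\Phi_H$ attached to the weight $\mu^\parallel + \lambda_1 \in \Lambda_{\Phi_H}$ (using Lemma~\ref{LemmaRootSubsystem}(1) together with Remark~\ref{Weightlattice}) evaluated at $H^\parallel$. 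Since $\langle\alpha, H^\parallel\rangle = \langle\alpha, H\rangle$ for $\alpha \in V_H$, the bound \eqref{alphaPhiH} yields $\|\frac{1}{2\pi}\langle\alpha, H^\parallel\rangle\| \lesssim N^{-1}$ for all $\alpha \in \Phi_H$, which is exactly the hypothesis of Lemma~\ref{EstBGG} applied to $\Phi_H$ in place of $\Phi$. We thus obtain $|D_{i_1}\cdots D_{i_k}\chi_{\mu^\parallel+\lambda_1}(H^\parallel)| \lesssim N^{|P_H|-k}$ for $i_j \in \{1,\ldots, r_H\}$, and combining with the polynomial estimate via the Leibniz rule \eqref{Leibniz} produces $|D_{i_1}\cdots D_{i_n}f| \lesssim N^{|P|+|P_H|-n}$ uniformly in $|n_i|\lesssim N$, as required.

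Feeding this into Lemma~\ref{WeylSum} yields \eqref{KN1mu}. The bulk of the work sits in the character bound, but its analytic substance --- the power series expansion of the Weyl numerator, the extraction of the vanishing factor $\prod_{\alpha\in P_H}\langle\alpha,H^\parallel\rangle$ via the BGG-Demazure operators attached to $\Phi_H$, and its cancellation against the denominator --- is already encapsulated in Lemma~\ref{EstBGG}. The remaining care is bookkeeping: verifying rationality of $L$, checking that $\mu^\parallel + \lambda_1$ genuinely lies in $\Lambda_{\Phi_H}$, and confirming that the missing $e^{-i\langle\rho_H, H^\parallel\rangle}$ factor in the denominator of $\chi_{\mu^\parallel+\lambda_1}(H^\parallel)$ is $\lambda_1$-independent and thus inert under the difference operators.
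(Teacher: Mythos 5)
Your proposal is correct and follows essentially the same route as the paper: apply Lemma \ref{WeylSum} (via Remark \ref{WeylSumRemark} for the shift by $\mu$) over the lattice $\mathbb{Z}\alpha_1'+\cdots+\mathbb{Z}\alpha_{r_H}'+\mathbb{Z}u_{r_H+1}+\cdots+\mathbb{Z}u_r$, bound the polynomial factor by $N^{|P|-k}$, and bound the character factor by $N^{|P_H|-k}$ through Lemma \ref{EstBGG} applied to the root subsystem $\Phi_H$ (using Lemma \ref{LemmaRootSubsystem} and Remark \ref{Weightlattice}), then combine with the Leibniz rule. Your extra bookkeeping (rationality of the lattice, the inert $e^{-i\langle\rho_H,H^\parallel\rangle}$ factor, vanishing of $D_i$ for $i>r_H$) only makes explicit what the paper leaves implicit.
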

\begin{proof}
We apply Lemma \ref{WeylSum} to the lattice 
$\mathbb{Z}\alpha_1'+\cdots+\mathbb{Z}\alpha_{r_H}'+\mathbb{Z}u_{r_H+1}+\cdots+\mathbb{Z}u_r$. Write 
\begin{align*}
\chi^{\mu^\parallel}(\lambda_1, H^\parallel)
=\frac{\sum_{s_H\in W_H}\det s_H \ e^{i\langle s_H(\mu^{\parallel}+\lambda_1), H^\parallel\rangle}}{\prod_{\alpha\in P_H}(e^{i\langle\alpha, H^\parallel\rangle}-1)}
\end{align*}
Then it suffices to show that 
\begin{align*}
\left|D_{i_1}\cdots D_{i_k}\left(\prod_{\alpha\in P}\langle \alpha,\mu+\lambda'_1+\lambda_2\rangle\chi^{\mu^\parallel}(\lambda_1, H^\parallel)\right) \right|\lesssim N^{d-|P\setminus P_H|-r-k}, 
\end{align*}
for $1\leq i_1,\cdots, i_k\leq r$, 
\begin{align*}
\lambda'_1&=n_1\alpha'_1+\cdots+n_{r_H}\alpha'_{r_H}, \\
\lambda_1&=n_1\alpha_1+\cdots+n_{r_H}\alpha_{r_H}, \\
\lambda_2&=n_{r_H+1}u_{r_{H}+1}+\cdots+n_ru_{r},
\end{align*}
uniformly in $|n_i|\lesssim N$, $i=1,\cdots, r$. Since $\prod_{\alpha\in P}\langle \alpha,\mu+\lambda'_1+\lambda_2\rangle$ is a polynomial and thus a pseudo-polynomial of degree $|P|$, it suffices to show that 
\begin{align}\label{Dichi||}
\left|D_{i_1}\cdots D_{i_k}(\chi^{\mu^\parallel}(\lambda_1, H^\parallel)) \right|\lesssim N^{d-|P\setminus P_H|-r-|P|-k}=N^{|P_H|-k}.
\end{align}
Since $\chi(\lambda_1)$ does not involve the variables $n_{r_H+1}, \cdots, n_r$, it suffices to prove \eqref{Dichi||} for $1\leq i_1,\cdots, i_k\leq r_H$ uniformly in $|\lambda_1|\lesssim N$. But this follows by applying 
Lemma \ref{EstBGG2} to the root system $\Phi_H$, noting Remark \ref{Weightlattice}. 
\end{proof}

\subsection{$L^p$ Estimates}
\label{Lp Estimates}
We prove in this section $L^p(G)$ estimates of the Schr\"odinger kernel for $p<\infty$. Though we do not apply them to the proof of the main theorem, they encapsulate the essential ingredients in the proof of the $L^\infty(G)$-estimate and are of independent interest. 

\begin{prop}\label{LpLp}
Let $K_N(t,x)$ be the Schr\"odinger kernel as in Proposition \ref{dispersive for Schrodinger}. Then for any $p>3$, we have 
\begin{align}\label{LpEstimates}
\|K_N(t,\cdot)\|_{L^p(G)}\lesssim\frac{N^{d-\frac{d}{p}}}{(\sqrt{q}(1+N\|\frac{t}{2\pi D}-\frac{a}{q}\|^{1/2}))^{r}}
\end{align} 
for $\frac{t}{2\pi D}\in\mathcal{M}_{a,q}$. 
\end{prop}

\begin{proof}
As a linear combination of characters, the  Schr\"odinger kernel $K_N(t,\cdot)$ is a central function. Then we can apply to it the Weyl integration formula \eqref{Weyl integration formula}
\begin{equation}\label{Weylintegration}
\|K_N(t,\cdot)\|^p_{L^p(G)}=\frac{1}{|W|}\int_B |K_N(t,b)|^p|D_P(b)|^2 \ db
\end{equation}
where $B$ is the maximal torus with normalized Haar measure $db$. Recall that we can parametrize $B=\exp \mathfrak{b}$ by 
$H\in i\mathfrak{b}^*\cong\mathfrak{b}$, and write 
\begin{align}\label{ParametrizeB}
B\cong i\mathfrak{b}^*/(2\pi\mathbb{Z}\alpha_1^\vee+\cdots+2\pi\mathbb{Z}\alpha_r^\vee)=[0,2\pi)\alpha_1^\vee+\cdots+[0,2\pi)\alpha_r^\vee,
\end{align}
where $\{\alpha^{\vee}_i=\frac{2\alpha_i}{\langle\alpha_i,\alpha_i\rangle}\mid i=1,\cdots,r\}$
is the set of simple coroots associated to a system of simple roots $\{\alpha_i\mid i=1,\cdots,r\}$.

We have shown in Section \ref{RootSubsystems} that each $H\in i\mathfrak{b}^*$ is associated to a root subsystem $\Phi_H$ such that \eqref{alphaPhiH} and \eqref{alphaNoPhiH} hold. Note that there are finitely many root subsystems of a given root system, thus $B$ is covered by finitely many subsets $R$ of the form 
\begin{align}\label{RegionR}
R=\{H\in B\mid \|\frac{1}{2\pi}\langle\alpha, H\rangle\|\lesssim N^{-1}, \forall \alpha\in\Psi; 
\|\frac{1}{2\pi}\langle\alpha, H\rangle\|> N^{-1}, \forall \alpha\in\Phi\setminus\Psi\}
\end{align}
where $\Psi$ is a root subsystem of $\Phi$. Thus to prove \eqref{LpEstimates}, using \eqref{Weylintegration}, it suffices to show 
\begin{align}\label{KNLpR}
\int_{R}|K_N(t,\exp H)|^p|D_p(\exp H)|^2\ dH\lesssim \left(\frac{N^{d}}{(\sqrt{q}(1+N\|\frac{t}{2\pi D}-\frac{a}{q}\|^{1/2}))^{r}}\right)^pN^{-d}.
\end{align}

By \eqref{C(H)}, \eqref{KNKN1} and \eqref{KN1Estimate}, 
we have 
\begin{align*}
K_N(t,\exp H)\lesssim\frac{1}{\prod_{\alpha\in P\setminus Q}(e^{i\langle\alpha,H\rangle}-1)}\cdot\frac{N^{d-|P\setminus Q|}}{\left(\sqrt{q}(1+N|\frac{t}{2\pi D}-\frac{a}{q}|^{1/2})\right)^{r}}
\end{align*}
where $P,Q$ are respectively the sets of positive roots of $\Phi$ and $\Psi$ with $P\supset Q$. Recalling $D_P(\exp H)=\prod_{\alpha\in P}(e^{i\langle\alpha, H\rangle}-1)$, \eqref{KNLpR} is then reduced to 
\begin{align*}
\int_R \left|\frac{1}{\prod_{\alpha\in P\setminus Q}(e^{i\langle\alpha, H\rangle}-1)}\right|^{p-2}\left|{\prod_{\alpha\in Q}(e^{i\langle\alpha, H\rangle}-1)}\right|^2\ dH\lesssim N^{p|P\setminus Q|-d}.
\end{align*}
Using
\begin{align*}
|e^{i\langle\alpha, H\rangle}-1|\lesssim\|\frac{1}{2\pi}\langle\alpha, H\rangle\|\lesssim |e^{i\langle\alpha, H\rangle}-1|, 
\end{align*}
it suffices to show
\begin{align}\label{R2-p2}
\int_R \left|\frac{1}{\prod_{\alpha\in P\setminus Q}\|\frac{1}{2\pi}\langle\alpha, H\rangle\|}\right|^{p-2}\left|{\prod_{\alpha\in Q}\|\frac{1}{2\pi}\langle\alpha, H\rangle\|}\right|^2\ dH\lesssim N^{p|P\setminus Q|-d}.
\end{align}
For each $H\in B$, we write 
\begin{equation*}
H=H'+H_0
\end{equation*}
such that 
\begin{align*}
\|\frac{1}{2\pi}\langle\alpha,H\rangle\|=|\frac{1}{2\pi}\langle\alpha,H'\rangle|, \ \ \langle\alpha,H_0\rangle\in 2\pi\mathbb{Z}, \ \ \forall \alpha\in P.
\end{align*}
We write 
\begin{align}\label{RR'H0}
R\subset\bigcup_{\substack{H_0\in B, \\ \langle\alpha, H_0\rangle\in2\pi\mathbb{Z},\forall \alpha\in P}}
R'+H_0
\end{align}
where 
\begin{align}\label{R'}
R'=\{H\in B\mid |\frac{1}{2\pi}\langle\alpha, H\rangle|\lesssim N^{-1}, \forall \alpha\in Q; 
|\frac{1}{2\pi}\langle\alpha, H\rangle|> N^{-1}, \forall \alpha\in P\setminus Q\}.
\end{align}
Note that $\langle\alpha,\alpha_i^\vee\rangle\in\mathbb{Z}$ for all $\alpha\in P$ and $i=1,\cdots, r$ due to the integrality of the root system, using \eqref{ParametrizeB}, we have that there are only finitely many $H_0\in B$ such that $\langle\alpha, H_0\rangle\in 2\pi \mathbb{Z}$ for all $\alpha\in P$. Thus using \eqref{RR'H0}, \eqref{R2-p2} is further reduced to 
\begin{align}\label{R'2-p2}
\int_{R'}\left|\frac{1}{\prod_{\alpha\in P\setminus Q}|\frac{1}{2\pi}\langle\alpha, H\rangle|}\right|^{p-2}\left|{\prod_{\alpha\in Q}|\frac{1}{2\pi}\langle\alpha, H\rangle|}\right|^2\ dH\lesssim N^{p|P\setminus Q|-d}.
\end{align}
Now we reparametrize $B\cong [0,2\pi)\alpha_1^\vee+\cdots+[0,2\pi)\alpha_r^\vee$ by 
\begin{align*}
H=\sum_{i=1}^rt_iw_i,\ \ (t_1,\cdots, t_r)\in D
\end{align*}
where $\{w_i, \ i=1,\cdots, r\}$ are the fundamental weights such that $\langle\alpha_i,w_j\rangle=\delta_{ij}\frac{|\alpha_i|^2}{2}$, $i,j=1,\cdots, r$, and $D$ is a bounded domain in $\mathbb{R}^r$. Then the normalized Haar measure $dH$ equals
\begin{align*}
dH=Cdt_1\cdots dt_r
\end{align*}
for some constant $C$. Let $s\leq r$ such that 
\begin{align*}
\{\alpha_1,\cdots, \alpha_s\}&\subset P\setminus Q,\\
\{\alpha_{s+1},\cdots, \alpha_r\}&\subset  Q.
\end{align*}
Using \eqref{R'}, we estimate
\begin{align*}
&\int_{R'}\left|\frac{1}{\prod_{\alpha\in P\setminus Q}|\frac{1}{2\pi}\langle\alpha, H\rangle|}\right|^{p-2}\left|{\prod_{\alpha\in Q}|\frac{1}{2\pi}\langle\alpha, H\rangle|}\right|^2\ dH\\ 
\numberthis \label{p-2>1}
&\lesssim \int_{R'}\frac{1}{|t_1\cdots t_s|^{p-2}}N^{(p-2)(|P\setminus Q|-s)}N^{-2|Q|}\ dt_1\cdots dt_r\\ 
&\lesssim N^{(p-2)(|P\setminus Q|-s)}N^{-2|Q|}\int_{\substack{|t_1|,\cdots, |t_s|\gtrsim N^{-1},\\ |t_{s+1}|,\cdots, |t_r|\lesssim N^{-1}}} \frac{1}{|t_1\cdots t_s|^{p-2}}\ dt_1\cdots dt_r. 
\end{align*}
If $p>3$, the above is bounded by
\begin{align*}
\lesssim N^{(p-2)(|P\setminus Q|-s)}N^{-2|Q|}
N^{s(p-3)-(r-s)}=N^{p|P\setminus Q|-d}, 
\end{align*}
noting that $2|P\setminus Q|+2|Q|+r=2|P|+r=d$.
\end{proof}

\begin{rem}
The requirement $p>3$ is by no means optimal. The estimate in \eqref{p-2>1} may be improved to lower the exponent $p$. We conjecture that \eqref{LpEstimates} holds for all $p>p_r$ such that $\lim_{r\to\infty}p_r=2$, where $r$ is the rank of $G$. 
\end{rem}

\section*{Acknowledgments}

I would like to thank my Ph.D. thesis advisors Prof. Monica Visan and Prof. Rowan Killip for their constant guidance and support. I especially thank them for giving me all the freedom to choose problems that suit my own interest. I would like to thank Prof. Rapha\"{e}l Rouquier and Prof. Terence Tao for sharing their expertise on the BGG-Demazure operators and the Weyl type sums respectively. I would also like to thank Jiayin Guo for pointing out several false conjectures of mine on root systems. My thanks also goes to Prof. Guozhen Lu who encouraged me to publish the paper. I am thankful to the editors and referees for their carefully reading the paper and their helpful suggestions on revising it. Last but not least, I am thankful to Mengmeng for her company and encouragement.

\end{document}